\newtheorem{theorem}{Theorem}[section]
\newtheorem{lemma}[theorem]{Lemma}
\newtheorem{proposition}[theorem]{Proposition}
\newtheorem{corollary}[theorem]{Corollary}
\theoremstyle{definition}
\newtheorem{definition}[theorem]{Definition}
\newtheorem{example}[theorem]{Example}
\theoremstyle{remark}
\newtheorem{remark}[theorem]{Remark}
\numberwithin{equation}{section}
\begin{document}

\title{On a new general method of  summation}

\author{Armen G. Bagdasaryan}
\address{Russian Academy of Sciences, 65 Profsoyuznaya, 117997 Moscow, Russia
              }
\email{bagdasar@member.ams.org}


\subjclass[2010]{40C99, 06A99, 40A99, 40A25, 26A03, 11M06, 11B68} 

\date{October 25, 2011} 


\keywords{ordering of integers, generalized sum, summation method, regular functions, summation of functions, infinite series, summation identities, sum formulas, zeta and related functions, Dirichlet series, operations with divergent series, limits of sequences and functions}

\begin{abstract}
A new general and unified method of summation, which is both regular and consistent, is invented. The generality and unification come from the intrinsic nature of the method, which is based on the fundamental idea concerning the method for ordering the integers, and the definition of sum that generalizes and extends the
usual one to the case when the upper limit of summation is less than the lower. The resulting theory includes a number of explicit and closed form summation formulas, and assigns limits to certain unbounded or oscillating functions. Some problems
and future lines of research within this more general theoretical setting are briefly discussed.
\end{abstract}

\maketitle

\tableofcontents


\section{Introduction}

Every summation method is a method of assigning numerical value to series consisting of infinite number of terms, called sum of the series.
To justify the term ``summation'', the connection between the terms of series and its ``sum'' should possess that kind of connection which takes place between the finite series and their usual sums. One definition that satisfies this requirement is a classical definition that defines the sum of infinite series as a finite limit of the sequence of its partial sums \cite{knopp}.

In the case of finite series, one seeks a closed-form expression for sums of the form $\sum_{u=a}^{b}f(u)$. As is known from finite calculus \cite{gelfond}, this problem is equivalent to solving the difference equation $g(u+1)-g(u)=f(u)$, and if there exists a solution $g(u)$ then we obtain the result $F(a,b)=g(b+1)-g(a)$.
In some cases we are able to find the analytical expression $F(a,b)$ for the sum  $\sum_{u=a}^{b}f(u), \, a\leq b$. 

However, sum of the form $\sum_{u=a}^{b}f(u)$ is not defined for $b<a$, not to mention $b<0$ -- negative number of terms, and by convention it is understood to be equal to zero.
In other words, sums $\sum_{a}^{b}$ are defined classically only when the number of terms is a positive integer or infinity. 

It should be noted that Euler derived the formula $\sum_{u=1}^{-1/2}\frac{1}{u}=-2\ln2$ that can be found in \cite{euler1}, in which the upper limit of summation is negative.  
So, one can suppose that Euler might have had similar ideas when calculating the values of infinite sums  \cite{euler1,euler2,euler3,euler4}.
Nevertheless, there have been no general works on  systematic studies to  elaborate this direction, apart from rare examples in Euler's works. 
But it was Euler  who started to systematically work with divergent series. 
In fact, Euler was convinced that ``to every series one could assign a number'' \cite{vara}, in a reasonable, consistent, and useful way, of course. Euler was unable to prove this statement in full and failed to give a rigorous foundations for divergent series, but he devised a technique (Euler's summation criterion) in order to sum large families of divergent series. 

For years, there was the suspicion that one could try to give real sense to divergent series.
This resulted in  a number of  methods that have been proposed to sum divergent series, which are due to Abel, Euler, Ces\`aro, Bernoulli, Dirichlet, Borel, Riesz, Ramanujan and some other mathematicians. The most powerful of them involve analytic continuation in the complex plane.

However, as is known, modern analysis fails in finding the limits of unbounded and oscillating functions and sequences, which is important in series summations, particularly if considering the remainder term, and in deriving a number of properties for divergent series and their summation from the unified and general standpoints of foundational nature. 

Our initial goal is to make sense of sums $\sum_{u=a}^{b}f(u)$ for $b<a$.  We define a sum in such a way that it retains its analytical expression, making the method regular, thus extending the traditional definition to the case when $b<a$, and providing more general setting for summation.

The main idea that we develop to define sums for arbitrary $a,b,\: a^>_<b$, including $b<0$, is to introduce a new ordering relation on the set of integer numbers. In addition, along with the usual definition of sum of a series, we introduce some additional conditions  -- ``axioms''; the model of this ``axiomatic system'' is  arithmetic.
By means of these, we define a regular summation method which can be applied to convergent series and many divergent series as well.  Apart from that, our method allows us to find the limits of certain unbounded and oscillating functions and sequences.
 
So, the purpose of this article is to present a new general and unified method for summation that provides a systematic way to extend summations to arbitrary limits $a$ and $b$.
The method makes it possible to get closed form evaluations, thus resulting for most of the cases in exact values and explicit formulas for the sums of infinite series. It also enables one to find finite and closed form summation formulas for various types of series.
In general, the theoretical setting developed in this paper leads to a number of nice and interesting results.

Throughout the paper, we present some examples which are supposed to give our readers a general understanding of how the method elaborated here actually works.
Most of them is concerned with the evaluation of Riemann's zeta and related functions at integer points.
These examples are usually simple, but the goal of these examples is to explain the general theory.
Of course, it should be mentioned that the sums for many of the series considered in our examples are known in the sense that they can be found in the literature, for example, in the book by Hardy \cite{hardy} or Titchmarsh \cite{titch}, or some others \cite{grad,joll}. 
However, the explicit form of our sum formulas and the most of general summation formulas have not been known so far.

The general structure of this article is as follows. In Section \ref{sec2} we define a new ordering relation on the set of integers, introduce a class of regular functions, give the definition of sum,  and define the summation method. Section \ref{sec:sums} deals with the properties of summation over newly ordered number line; 
Section \ref{sec:main} is the main part of our paper where we present our main results, including a number of summation formulas, several properties of divergent series,
and some propositions concerning the limits of unbounded and oscillating functions.
In Section \ref{sec:exten} we show how the class of regular functions can be substantially extended.
We conclude in Section \ref{sec:conc} with discussion of the method and obtained results, and with some aspects of future research
which will be based on the theory presented here.

\section{The method}\label{sec2}

Consider the set of all integer numbers $\mathbb{Z}$. We introduce a new ordering relation on the set $\mathbb{Z}$ as follows.

\begin{definition} \label{def:order}
We shall say that $a$ precedes $b$, $a, b \in \mathbb{Z}$, and write $a\prec b$, if the inequality $\dfrac{-1}{a}<\dfrac{-1}{b}$ holds; $a\prec b \Leftrightarrow \dfrac{-1}{a}<\dfrac{-1}{b}$. 
\end{definition}

In the definition \ref{def:order} we assume by convention that $0^{-1}=\infty$.

Alternatively, using the set theoretic language, this ordering can be defined in the following manner.

\begin{definition}
$a\prec b$ if and only if 
\begin{enumerate}
\item $a=0$
\item $a$ is positive, $b$ is negative
\item $a$ and $b$ both positive and $a<b$
\item $a$ and $b$ both negative and $|b|<|a|$
\end{enumerate}
\end{definition}

The introduced ordering relation can be visualized as follows 
$$
0 \quad 1 \quad 2 \quad 3 \: \dots \: n \quad \dots \quad -n \: \dots \: -3 \quad -2 \quad -1
$$

From this method of ordering it follows that any positive integer number, including zero, precedes any negative integer number, and the set $\mathbb{Z}$ has zero as the first element and $-1$ as the last element, that is, we have that the set $\mathbb{Z}=[0, 1, 2,...-2, -1]$\footnote{the set $\mathbb{Z}$ can be
imagined as  homeomorphic to a circle}. 
For the re-ordered set $\mathbb{Z}$ the following two essential axioms of order hold:

\begin{itemize}
\item Transitivity: \newline 
				if $a\prec b$ and $b\prec c$ then $a\prec c$ 
\item Totality: \newline 
				if $a\neq b$ then either $a\prec b$ or $b\prec a$ 
\end{itemize}

So, the introduced ordering $\prec$ defines a strict linear order on $\mathbb{Z}$.

\begin{remark}
Let us note that Euler's works \cite{euler2,euler3,euler4} show that he possibly had advanced the same idea through reasoning about infinite series. In his fundamental work \cite{euler2} Euler concluded that ``the sum $-1$ is larger than infinity'' \cite{dun,kline,klinebook,sand}, that is numbers beyond infinity might be negative.
We find from several sources that he ``argued that infinity separates positive and negative numbers just as $0$ does'' \cite{dun,kline,sand}. And in \cite{euler4} Euler 
``showed that positive and negative numbers are connected by crossing through infinity'' \cite{wiel}.
Although no one knows Euler's true idea, we think that discussion of this matter, having a little bit speculative character, is still of much interest, not only of historical value, so we include this remark for the sake of completeness of this study.
\end{remark}

Let $a,b$ be any integer numbers. Suppose $\mathbb{Z}_{a, b}$ is a \textit{part} of $\mathbb{Z}$ such that 
$$ \mathbb{Z}_{a, b} =               
									\begin{cases}                   
									[a, b]                     & \quad (a\preceq b)\\                   
									\mathbb{Z}\setminus (b, a) & \quad (a\succ b)              
									\end{cases}       						 
$$
where $\mathbb{Z}\setminus (b, a)=[a, -1]\cup[0, b]$ \footnote{the segment $[a, b]$ (interval $(a, b)$), as usual, is the set of all integers $x$ such that $a\preceq x\preceq b$ ($a\prec x\prec b$)}. 

Let $f(x)$ be a function of real variable defined on $\mathbb{Z}$. We introduce the following fundamental definition of sum.

\begin{definition} \label{def:sum}
For any $a, b\in \mathbb{Z}$
\begin{equation} 
\sum_{u=a}^b{f(u)}=\sum_{u\in \mathbb{Z}_{a, b}}{f(u)}.\footnote{observing the order of elements in $\mathbb{Z}_{a, b}$} \label{def:first_sum_eq}
\end{equation}
\end{definition}

This definition  satisfies the condition of generality and has a concrete sense for any integer values of $a$ and $b$ ($a ^{>}_{<} b$).
The definition \ref{def:sum} generalizes the usual definition of sum for the case $b<a$. 
The sum $\sum_{a}^{b}f(u)$ is defined for arbitrary limits of summation ($a ^{>}_{<} b$) and in such a way 
that its functional dependence for $a\leq b$ retains its analytical expression, i.e. it is regular.
The set $\mathbb{Z}_{a, b}$, depending on the elements $a$ and $b$, can be either finite or infinite. Thus, the sum on the right-hand side of (\ref{def:first_sum_eq}) can become an infinite series, which has to be given a numeric meaning.


In this study we restrict ourselves with a class of functions that we refer to as regular (but, we have to mention that quite a few of statements of our summation theory do not require the regularity of functions).
The studying of the class of regular functions is motivated by the wide applications of the theory of summation of functions, developed within the calculus of finite differences. 
The following definition introduces the notion of regular function.

\begin{definition}\label{def:reg}
The function $f(x)$, $x\in \mathbb{Z}$ , is called \textit{regular} if there exists an elementary\footnote{determined by formulas, constructed by a finite number of algebraic operations and constant functions and algebraic, trigonometric, and exponential
functions, and their inverses through repeated combinations and compositions} 
function $F(x)$ such that $F(x+1)-F(x)=f(x), \quad \forall \, x\in \mathbb{Z}$. The function $F(x)$ satisfying the above relation is said to be a \textit{primitive function} for $f(x)$.
\end{definition}

It should be noted that the primitive function is not unique for the given function $f(x)$. Namely, if $F(x)$ is a primitive function for $f(x)$, then the function $F(x)+C$, where $C$ is a constant, is also a primitive function for $f(x)$. Thus, any function $F(x)$, which is primitive for $f(x)$, can be represented in the form $F(x)+C(x)$, where $C(x)$ is a periodic function with the period $1$. 
The class of regular functions is large enough to be used in applications. 

From definition \ref{def:reg} we obtain

\begin{proposition}
Let $f_{i}(x)$, $i=1,2..,k, \, k\in\mathbb{N}$, be regular functions. Then the function $\varphi(x)=\sum_{i=1}^{k}\alpha_{i}f_{i}(x)=\alpha_{1}f_{1}(x)+\alpha_{2}f_{2}(x)+\dots+\alpha_{k}f_{k}(x), \; \alpha_i\in\mathbb{R}$, is also a regular function. That is, any linear combination of regular functions is a regular function.
\end{proposition}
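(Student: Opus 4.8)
The plan is to construct a primitive for $\varphi$ directly from the primitives of the $f_i$. By Definition \ref{def:reg}, each regular function $f_i(x)$ admits an elementary primitive $F_i(x)$ with $F_i(x+1)-F_i(x)=f_i(x)$ for all $x\in\mathbb{Z}$. I would then set
$$
\Phi(x)=\sum_{i=1}^{k}\alpha_i F_i(x)=\alpha_1F_1(x)+\alpha_2F_2(x)+\dots+\alpha_kF_k(x),
$$
and propose $\Phi$ as a primitive for $\varphi$.

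Next I would verify the defining difference relation. Using linearity of the finite sum and the relations $F_i(x+1)-F_i(x)=f_i(x)$, one computes
$$
\Phi(x+1)-\Phi(x)=\sum_{i=1}^{k}\alpha_i\bigl(F_i(x+1)-F_i(x)\bigr)=\sum_{i=1}^{k}\alpha_i f_i(x)=\varphi(x),
$$
valid for every $x\in\mathbb{Z}$. This is a one-line calculation and requires nothing beyond the fact that $k$ is finite, so that the sum can be split term by term.

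The only point that needs a word of justification is that $\Phi$ is again an \emph{elementary} function in the sense of the footnote to Definition \ref{def:reg}: it is obtained from the elementary functions $F_1,\dots,F_k$ by a finite number of scalar multiplications (by the constants $\alpha_i\in\mathbb{R}$) and additions, and the class of elementary functions is closed under such finite algebraic operations. Hence $\Phi$ is a legitimate primitive function, and by Definition \ref{def:reg} the function $\varphi$ is regular.

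I do not anticipate a genuine obstacle here; the statement is essentially a restatement of the linearity of the forward difference operator $\Delta F(x)=F(x+1)-F(x)$ together with the closure of the elementary functions under finite linear combinations. The only thing to be careful about is to keep $k$ finite throughout, since an infinite linear combination would in general fail to be elementary and would instead fall under the summation theory developed in the later sections.
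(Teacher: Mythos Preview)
Your argument is correct and is precisely the obvious verification the paper has in mind: the paper does not spell out a proof at all, simply asserting that the proposition follows from Definition~\ref{def:reg}. Your construction of $\Phi=\sum_i\alpha_iF_i$ and the one-line difference computation is exactly the intended justification, and your remark about closure of elementary functions under finite linear combinations is the only point worth noting.
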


We postulate that

\textit{every series $\sum_{u=1}^{\infty}f(u)$, where $f(u)$ is a regular function, has a certain finite numeric value}.\footnote{this assertion has something in common with the first Euler's principle concerning infinite series which states that ``to every series could be assigned a number'' \cite{vara}.} 


For the purpose of assigning a numerical value to (\ref{def:first_sum_eq}), 
we introduce several sufficiently general and natural axioms.

\begin{itemize}
\item[\bf A1.] \label{axiom:1} If \; $S_n=\sum_{u=a}^n{f(u)} \quad \forall \, n$, \ then \ $\lim_{n\rightarrow \infty}S_n=\sum_{u=a}^\infty {f(u)}$.       
\item[]
\item[\bf A2.] \label{axiom:2} If \; $S_n=\sum_{u=1}^{[n/2]}{f(u)} \quad \forall \, n$, \ then \  $\lim_{n\rightarrow \infty}S_n=\sum_{u=1}^\infty {f(u)}$.    
\item[]
\item[\bf A3.] \label{axiom:3} If \; $\sum_{u=a}^{\infty} f_1(u)=S_1$ \ and \ $\sum_{u=a}^{\infty}f_2(u)=S_2$, \ then \\ 
              $\sum_{u=a}^{\infty} \left(\alpha_1 f_1(u) + \alpha_2 f_2(u)\right)= \alpha_1 S_1+\alpha_2 S_2$. 
\item[]   
\item[\bf A4.] \label{axiom:4} If \ $G=[a_1, b_1]\cup[a_2, b_2]$, \ and \ $[a_1, b_1]\cap[a_2, b_2]=\emptyset$, \ then  $\sum_{u\in G}f(u)=\sum_{u=a_1}^{b_1}f(u)+\sum_{u=a_2}^{b_2}f(u)$.
\end{itemize}

The axioms \textbf{A1}--\textbf{A4} define the method of summation, which is regular due to axiom \textbf{A1}, 
since it sums every convergent series to its usual sum.   

It is worth to notice that introduced ``axioms'' are consistent\footnote{they do not contradict to
what has been obtained in the framework or on the basis of known means of analysis}
with statements and definitions of analysis: the axiom \textbf{A1} is the usual definition of sum of a series; the axiom \textbf{A2} also does not contradict to classical analysis; the axiom \textbf{A3}  constitutes the well-known property of convergent series\footnote{which have finite sums; bearing in mind our postulate, use of this axiom is in agreement with classical analysis};  the axiom \textbf{A4} has quite natural and common formal meaning.

Relying on axiom \textbf{A4}, one can make a couple of remarks on the sum (\ref{def:first_sum_eq}).

\begin{remark} \label{rem:a_bigger_b}
Let $b\prec a$. Then by definition $\mathbb{Z}_{a, b}=[a, -1]\cup[0, b]$ and $[a, -1]\cap[0, b]=\emptyset$. And in view of axiom \textbf{A4}, for any regular function $f(x)$
\begin{equation}
\sum_{u=a}^b f(u)=\sum_{u=a}^{-1}f(u)+\sum_{u=0}^b f(u), \; \; \; b\prec a. \label{eq:a_bigger_b}
\end{equation}
\end{remark}

\begin{remark} \label{rem:sum_of_two_intervals}
From axiom \textbf{A4},  it follows that if $G=\mathbb{Z}_{a_1, b_1}\cup \mathbb{Z}_{a_2, b_2}$ and $\mathbb{Z}_{a_1, b_1}\cap \mathbb{Z}_{a_2, b_2}=\emptyset$ then 
$$
\sum_{u\in G}f(u)=\sum_{u=a_1}^{b_1}f(u)+\sum_{u=a_2}^{b_2}f(u).
$$
\end{remark}

We note that our method implies $\sum_{n=\alpha}^{\alpha+k}f(n)=f(\alpha)+f(\alpha+1)+f(\alpha+2)+...+f(\alpha+k), \alpha\in \mathbb{Z}, k\in \mathbb{N}$, 
so we are consistent with the classical definition of summation \cite{brom,knopp}.

The following definition answers the natural question of how the notion of limit is defined.

\begin{definition} \label{def:limit_seq}
A number $A$ is said to be the \textit{limit} of a numeric sequence $F(1)$, $F(2)$,..., $F(n)$,... (function of integer argument), i. e. $\lim_{n\rightarrow\infty}F(n)=A$, if $\sum_{u=1}^{\infty}f(u)=A$, where $F(1)=f(1)$ and $F(u)-F(u-1)=f(u) \; \; (u>1)$, i. e.
$$
F(1)+(F(2)-F(1))+...+(F(n)-F(n-1))+...=A. 
$$
\end{definition}

Note that  definition \ref{def:limit_seq}, in particular, coincides with the classical definition of limit of sequence, which is convergent in usual sense, since given any convergent sequence, one can display its limit as the telescoping series \cite{ash}. This reduces the question of existence of limit of functions of integer argument $F(n)$ to the problem of finding the sum of the series
$$
F(1)+(F(2)-F(1))+...+(F(n)-F(n-1))+...
$$

Because of our postulate, this allows us to claim that any elementary function of integer argument defined on $\mathbb{Z}$ has a definite limit.


\section{Properties of sums over re-ordered number line}\label{sec:sums}

The newly ordered number line brings forth its own properties, while at the same time possesses the usual ones. In this section we consider some of the properties of sums
over re-ordered number line. 

\begin{proposition} \label{prop:a_a-1}
If $f(x)$ is a regular function and $a\in \mathbb{Z}$ is fixed, then 
\begin{equation}
\sum_{u=a}^{a-1}f(u)=\sum_{u\in \mathbb{Z}}f(u).
\end{equation}
\end{proposition}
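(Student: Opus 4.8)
The plan is to apply Definition~\ref{def:sum} directly to the pair of limits $(a, a-1)$ and then identify the resulting index set with all of $\mathbb{Z}$. First I would observe that, under the ordering $\prec$, the element $a-1$ precedes $a$ whenever $a \ne 0$; indeed $a$ and $a-1$ are consecutive integers in the usual sense, and in every case covered by Definition~\ref{def:order}—both positive, both negative, or the wrap-around pair $a=0$, $a-1=-1$—one checks $-1/(a-1) < -1/a$ fails, so in fact $a-1 \succ a$ is false and we are in the case $b = a-1 \prec a$. (The one genuinely special case is $a = 0$: then $a - 1 = -1$, which is the \emph{last} element of $\mathbb{Z}$ under $\prec$, so again $a-1 \succ a = 0$ is the relevant relation, i.e. $b \prec a$ with $b = -1$, $a = 0$.) Hence for every $a \in \mathbb{Z}$ we have $a - 1 \prec a$, equivalently $b \prec a$ with $b = a-1$.

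Next I would unfold the definition of $\mathbb{Z}_{a, a-1}$. Since $b = a - 1 \prec a$, the second branch of the definition of $\mathbb{Z}_{a,b}$ applies, giving
\[
\mathbb{Z}_{a,\, a-1} = \mathbb{Z} \setminus (a-1,\, a),
\]
where $(a-1, a)$ denotes the open interval in the $\prec$-order, i.e. the set of all integers $x$ with $a - 1 \prec x \prec a$. But there is no integer strictly between two $\prec$-consecutive integers—this is immediate from the visualization $0 \prec 1 \prec 2 \prec \cdots \prec -2 \prec -1$, in which $a-1$ and $a$ are adjacent—so $(a-1, a) = \emptyset$ and therefore $\mathbb{Z}_{a,\,a-1} = \mathbb{Z}$.

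Finally, combining this with Definition~\ref{def:sum} yields
\[
\sum_{u=a}^{a-1} f(u) = \sum_{u \in \mathbb{Z}_{a,\,a-1}} f(u) = \sum_{u \in \mathbb{Z}} f(u),
\]
which is the claimed identity. The one point requiring care—and the only real obstacle—is the bookkeeping around $a = 0$ and $a = 1$, where the ``adjacent pair'' $(a-1, a)$ straddles the junction between the positive block and the negative block (for $a=1$, the pair is $(0,1)$, still adjacent; for $a = 0$, the pair is $(-1, 0)$, i.e. the last and first elements, still adjacent since $\mathbb{Z}$ behaves like a circle under $\prec$). In all these cases the open interval is still empty, so the argument goes through uniformly; one should also note that regularity of $f$ is invoked only implicitly, to guarantee via the postulate that the (possibly infinite) sum $\sum_{u \in \mathbb{Z}} f(u)$ has a well-defined value, consistent with the convention made after Definition~\ref{def:sum}.
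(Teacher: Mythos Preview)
Your overall strategy---unfold Definition~\ref{def:sum} and identify $\mathbb{Z}_{a,a-1}$ with $\mathbb{Z}$---matches the paper's, and for $a \neq 0$ your argument is correct. The slip is at $a = 0$. You correctly observe there that $a-1 = -1 \succ 0 = a$, but then immediately write ``i.e.\ $b \prec a$'' and conclude ``for every $a \in \mathbb{Z}$ we have $a-1 \prec a$,'' which is the opposite of what you just said. Because in fact $0 \preceq -1$, it is the \emph{first} branch of the definition of $\mathbb{Z}_{a,b}$ that applies when $a=0$, giving $\mathbb{Z}_{0,-1} = [0,-1]$; this equals $\mathbb{Z}$ because $0$ is the $\prec$-minimum and $-1$ the $\prec$-maximum. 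Your ``second branch, open interval $(a-1,a)$ empty'' computation is simply not the operative case here, and the circle heuristic does not repair the linear-order bookkeeping: $(\mathbb{Z},\prec)$ is a strict linear order, so $-1$ and $0$ are not adjacent---they are the two endpoints.

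The paper avoids this by splitting explicitly into $a=0$ versus $a\neq 0$ from the outset, using the first branch for $a=0$ and the second branch (rewritten as $[a,-1]\cup[0,a-1]=\mathbb{Z}$) for $a\neq 0$. That is exactly the case split you should make explicit rather than trying to force a uniform argument through the second branch.
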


\begin{proof}
Let us consider two cases. 

(1) $a=0$. \newline 
If $a=0$ then $0\prec -1$. By definition, $\mathbb{Z}_{0,-1}=[0, 1, 2,..., -2, -1]=\mathbb{Z}$. Hence 
$$
\sum_{u=a}^{a-1}f(u)=\sum_{u\in \mathbb{Z}}f(u), \qquad (a=0).
$$

(2) $a\neq 0$. \newline
If $a\neq 0$ then $a-1\prec a$ and, by definition, $\mathbb{Z}_{a, a-1}=[a, -1]\cup [0, a-1]=[0, a-1]\cup [a, -1]=[0, 1, 2,..., -2, -1]$, 
i. e. $\mathbb{Z}_{a, a-1}=\mathbb{Z}$, and we get  
$$
\sum_{u=a}^{a-1}f(u)=\sum_{u\in \mathbb{Z}}f(u), \qquad (a\neq 0).
$$

This concludes the proof. 
\end{proof}

\begin{proposition}\label{prop:sum_zero}
Suppose $f(x)$ is a regular function. Then
\begin{equation}
\sum_{u=a}^{a-1}f(u)=0, \qquad \forall \, a\in \mathbb{Z}  \label{eq:sum_a_a-1_zero}
\end{equation}
or which is the same in view of Proposition \ref{prop:a_a-1}
\begin{equation}
\sum_{u\in \mathbb{Z}}f(u)=0. \label{eq:sum_zero}
\end{equation}
\end{proposition}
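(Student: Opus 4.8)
The plan is to reduce \eqref{eq:sum_a_a-1_zero} to the single statement \eqref{eq:sum_zero} by invoking Proposition \ref{prop:a_a-1}, so that it suffices to show $\sum_{u\in\mathbb{Z}}f(u)=0$ for every regular $f$, and then to evaluate this doubly-infinite sum using the primitive function $F$ together with axioms \textbf{A1} and \textbf{A4}. First I would fix a primitive $F$ for $f$, so that $f(u)=F(u+1)-F(u)$ for all $u\in\mathbb{Z}$. By Proposition \ref{prop:a_a-1} the set $\mathbb{Z}$, read in the induced order, is $[0,-1]$, i.e. $[0,1,2,\dots]\cup[\dots,-2,-1]$; by Remark \ref{rem:a_bigger_b} (the case $b=-1\prec 0=a$) I can split
\begin{equation}
\sum_{u\in\mathbb{Z}}f(u)=\sum_{u=0}^{-1}f(u)=\sum_{u=0}^{\infty}f(u)+\sum_{u=-\infty}^{-1}f(u),
\end{equation}
where the two pieces correspond to the ``positive tail'' $[0,1,2,\dots]$ and the ``negative tail'' $[\dots,-2,-1]$ of the re-ordered line.

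Next I would compute each tail as a telescoping series justified by axiom \textbf{A1}. For the positive part, the partial sums are $S_n=\sum_{u=0}^{n}(F(u+1)-F(u))=F(n+1)-F(0)$, so by \textbf{A1} $\sum_{u=0}^{\infty}f(u)=\lim_{n\to\infty}F(n)-F(0)$ (writing the limit in the sense of Definition \ref{def:limit_seq}). For the negative part, note that in the re-ordered line the terms $-n,\dots,-2,-1$ are summed in that order, and the partial sum from $-n$ to $-1$ telescopes to $F(0)-F(-n)$; letting $n\to\infty$ and using \textbf{A1} again (now applied along the negative tail, which is legitimate since the ordering makes $-1$ the last element and $-\infty$ the approach to it) gives $\sum_{u=-\infty}^{-1}f(u)=F(0)-\lim_{n\to\infty}F(-n)$. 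Adding the two contributions, the $F(0)$ terms cancel and one is left with $\lim_{n\to\infty}F(n)-\lim_{n\to\infty}F(-n)$.

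Finally I would argue that this difference vanishes. The key point is that $F$ is an elementary function and, on the re-ordered line, $+\infty$ and $-\infty$ are ``the same place'' — the single gap between the last element $-1$ and the first element $0$ — so the limit of $F(n)$ as $n\to+\infty$ must equal the limit as $n\to-\infty$; both are the single value $F(\infty)$ attached to that point of the circle-like $\mathbb{Z}$. More operationally, one can see this from axiom \textbf{A2}: applying Definition \ref{def:limit_seq} to the sequence $F(n)$ and comparing the telescoping series $\sum_{u=1}^{\infty}f(u)$ with the same series re-summed two terms at a time forces consistency of the value assigned to the endpoint, and the only value consistent with $\sum_{u\in\mathbb{Z}}f(u)$ being well defined (independently of the splitting point $a$, by Proposition \ref{prop:a_a-1}) is $0$. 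Hence $\sum_{u\in\mathbb{Z}}f(u)=0$, and by Proposition \ref{prop:a_a-1} also $\sum_{u=a}^{a-1}f(u)=0$ for all $a\in\mathbb{Z}$.

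The step I expect to be the main obstacle is the last one: rigorously justifying that $\lim_{n\to+\infty}F(n)=\lim_{n\to-\infty}F(n)$ for an elementary primitive $F$. This is really the crux of the whole ``numbers beyond infinity'' philosophy of the paper, and it cannot follow from classical analysis alone (where, e.g., $F(x)=x$ gives $+\infty\neq-\infty$); it must be underwritten by the postulate that every $\sum_{u=1}^\infty f(u)$ has a finite value together with the order-theoretic identification of the two infinities, so care is needed to state exactly which axiom or convention is doing the work and to confirm it does not conflict with the regularity hypothesis on $f$.
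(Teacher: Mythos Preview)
Your approach has a genuine circularity. The identity you need at the end, $\lim_{n\to\infty}F(n+1)=\lim_{n\to\infty}F(-n)$, is exactly Lemma~\ref{lemma2} of the paper, and the paper's proof of Lemma~\ref{lemma2} explicitly invokes \eqref{eq:sum_zero} --- the very statement you are trying to establish. So within this framework the ``two infinities coincide'' fact is a \emph{consequence} of Proposition~\ref{prop:sum_zero}, not an input to it; your hand-wave about axiom~\textbf{A2} and Definition~\ref{def:limit_seq} does not supply an independent route to it, and you correctly sensed this was the weak point.

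The paper sidesteps the issue entirely by never passing to limits. It uses the telescoping identity \eqref{regeq}, $\sum_{u=a}^{b}f(u)=F(b+1)-F(a)$, as the operative meaning of ``$f$ is regular'' and applies it directly to the pieces. For $a=0$ one substitutes $b=a-1$ and reads off $F(a)-F(a)=0$. For $a\neq 0$ one uses Remark~\ref{rem:a_bigger_b} to split $\sum_{u=a}^{a-1}f(u)=\sum_{u=a}^{-1}f(u)+\sum_{u=0}^{a-1}f(u)$ and then applies \eqref{regeq} to each piece, obtaining $(F(0)-F(a))+(F(a)-F(0))=0$. The cancellation is purely algebraic in the values of the primitive $F$; no statement about $\lim_{n\to\infty}F(\pm n)$ is needed. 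In other words, the regularity hypothesis is doing all the work \emph{before} any limit is taken, whereas your argument pushes the work to the limit stage and then has nothing non-circular to appeal to.
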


\begin{proof}
The proof is done in two steps.

\begin{itemize}
\item[(1)] $a=0$. For $a=0$, we have $a\prec a-1$.
From Definition \ref{def:reg} of regular functions we deduce
\begin{equation}
\sum_{u=a}^{b}f(u)=F(b+1)-F(a), \label{regeq}
\end{equation}
which is true for any values of $a$ and $b$, $a\leq b$.
Taking $a-1$ instead of $b$, we obtain
 
$$
\sum_{u=a}^{a-1}f(u)=F(a)-F(a)=0.
$$

\item[(2)] $a\neq 0$. If $a\neq 0$ then $a-1\prec a$. Taking into account the Remark \ref{rem:a_bigger_b} 
and equality (\ref{regeq}), we obtain 
$$
\sum_{u=a}^{a-1}f(u)=\sum_{u=a}^{-1}f(u)+\sum_{u=0}^{a-1}f(u)=F(0)-F(a)+F(a)-F(0)=0
$$
and
$$
\sum_{u=a}^{a-1}f(u)=0, \quad (a\neq 0).
$$
\end{itemize}
which completes the proof.
\end{proof}

\begin{proposition} \label{prop:pair_of_numbers}
For any numbers $m$ and $n$ such that $m\prec n$
\begin{equation}
\sum_{u=m}^{n}f(u)=\sum_{u=-n}^{-m}f(-u). \label{eq:pair_of_numbers}
\end{equation}
\end{proposition}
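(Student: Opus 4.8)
The identity is, at heart, the substitution $u\mapsto -u$, so the plan is to show that this substitution is compatible with Definition~\ref{def:sum}. The order-theoretic key is that for $m,n\in\mathbb{Z}\setminus\{0\}$ one has $m\prec n\iff -n\prec -m$: by Definition~\ref{def:order} this is the chain $\frac{-1}{m}<\frac{-1}{n}\iff\frac{1}{n}<\frac{1}{m}\iff\frac{-1}{-n}<\frac{-1}{-m}$. Hence $\sigma(x):=-x$ is an order-reversing involution of $(\mathbb{Z}\setminus\{0\},\prec)$, and since $0\notin\mathbb{Z}_{m,n}$ whenever $m\neq 0$, this upgrades to the set identity $\sigma(\mathbb{Z}_{m,n})=\mathbb{Z}_{-n,-m}$; substituting $u=-x$ in $\sum_{u\in\mathbb{Z}_{-n,-m}}f(-u)$ then returns $\sum_{x\in\mathbb{Z}_{m,n}}f(x)$, which by Definition~\ref{def:sum} is exactly (\ref{eq:pair_of_numbers}).

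To make this airtight — in particular to avoid fussing over the order in which a possibly infinite family is summed — I would run the argument through the telescoping formula (\ref{regeq}). If $F$ is a primitive of $f$, then $H(u):=-F(1-u)$ is an elementary (hence admissible) primitive of $g(u):=f(-u)$, since $H(u+1)-H(u)=F(1-u)-F(-u)=f(-u)$; thus $g$ is regular. Now split on whether $m$ is zero. If $m\neq 0$, then $n\neq 0$ too and, by the equivalence above, $-n\prec -m$, so both $\mathbb{Z}_{m,n}=[m,n]$ and $\mathbb{Z}_{-n,-m}=[-n,-m]$ are $\preceq$-segments; applying (\ref{regeq}) to $f$ and to $g$ gives $\sum_{u=m}^{n}f(u)=F(n+1)-F(m)$ and $\sum_{u=-n}^{-m}f(-u)=H(1-m)-H(-n)=-F(m)+F(n+1)$, which coincide. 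If $m=0$ (so $n\neq 0$), then $0=-m\prec -n$, hence $\mathbb{Z}_{-n,-m}=\mathbb{Z}_{-n,0}$ is the wrapped set $[-n,-1]\cup\{0\}$; using axiom \textbf{A4} (Remark~\ref{rem:a_bigger_b}) I would write $\sum_{u=-n}^{0}f(-u)=\sum_{u=-n}^{-1}f(-u)+f(0)$, evaluate $\sum_{u=-n}^{-1}f(-u)=H(0)-H(-n)=F(n+1)-F(1)$ by (\ref{regeq}) (legitimate since $-n\preceq -1$), and combine with $f(0)=F(1)-F(0)$ to obtain $F(n+1)-F(0)=\sum_{u=0}^{n}f(u)$.

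The only genuine obstacle is the asymmetric status of $0$ under $\prec$: negation fixes the ``first'' element while reversing everything else, so a plain segment on the left can correspond to a wrapped part on the right, and the instance $m=0$ really does have to be peeled off and absorbed via \textbf{A4}. Apart from that, the statement reduces to the order-reversal equivalence together with the telescoping identity (\ref{regeq}); if one prefers the clean bijection proof of the first paragraph, the same caveat resurfaces as the need to know that reversing the $\prec$-order of the summands of an infinite $\sum_{u\in\mathbb{Z}_{m,n}}f(u)$ does not alter its value, which is again supplied by (\ref{regeq}) (or by the summation axioms).
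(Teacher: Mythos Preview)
Your proposal is correct, and the core idea---that $u\mapsto -u$ reverses the $\prec$-order on $\mathbb{Z}\setminus\{0\}$ and therefore carries $\mathbb{Z}_{m,n}$ bijectively onto $\mathbb{Z}_{-n,-m}$---is exactly the paper's argument, which consists of the single sentence ``The proof relies on the fact that the order relation between pairs of numbers $m,n$ and $-n,-m$ is the same.'' Your first paragraph is that sentence made precise.

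Where you go beyond the paper is twofold. First, you supply an explicit telescoping verification via the primitive $H(u)=-F(1-u)$ of $g(u)=f(-u)$, which sidesteps any worry about re-indexing a possibly infinite family (a legitimate concern given the footnote to Definition~\ref{def:sum}); the paper offers no such computation. Second, you isolate the case $m=0$, where the literal implication $m\prec n\Rightarrow -n\prec -m$ fails (nothing precedes $0$) and $\mathbb{Z}_{-n,0}$ is the wrapped set $[-n,-1]\cup\{0\}$ rather than a segment. The paper's one-liner does not mention this; in effect it is relying on the set-level identity $\sigma(\mathbb{Z}_{m,n})=\mathbb{Z}_{-n,-m}$, which does continue to hold at $m=0$ even though the order statement does not. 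Your treatment via \textbf{A4} makes this boundary case explicit. The only mild caveat is that your telescoping step invokes (\ref{regeq}) on $\preceq$-segments that may be infinite (e.g.\ when $m>0$ and $n<0$); the paper itself does exactly this without comment (see the proof of Proposition~\ref{prop:sum_zero}), so you are operating at the same level of rigor as the text.
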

\begin{proof}
The proof relies on the fact that the order relation between pairs of numbers $m, n$ and $-n, -m$ is the same.
\end{proof}

\begin{proposition} \label{prop:mean}
Let $f(x)$ be a regular function and let $a$, $b$, $c$ be any integer numbers such that $b\in \mathbb{Z}_{a, c}$. Then 
\begin{equation}
\sum_{u=a}^{c}f(u)=\sum_{u=a}^{b}f(u)+\mathop{{\sum}'}_{u=b+1}^{c}f(u),  \label{eq:prop_mean}
\end{equation}
where the prime on the summation sign means that $\mathop{{\sum}'}_{u=b+1}^{c}f(u)=0$ for $b=c$.
\end{proposition}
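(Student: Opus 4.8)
The plan is to reduce the identity to a telescoping difference of values of a primitive function. First I would record the auxiliary fact that, for a regular function $f$ with primitive $F$, one has
$$\sum_{u=a}^{b} f(u) = F(b+1) - F(a) \qquad \text{for all } a,b \in \mathbb{Z},$$
not merely for $a \preceq b$. For $a \preceq b$ this is precisely (\ref{regeq}). For $b \prec a$, Remark \ref{rem:a_bigger_b} together with (\ref{regeq}) gives
$$\sum_{u=a}^{b} f(u) = \sum_{u=a}^{-1} f(u) + \sum_{u=0}^{b} f(u) = \bigl(F(0) - F(a)\bigr) + \bigl(F(b+1) - F(0)\bigr) = F(b+1) - F(a),$$
the same closed form; this is essentially the computation already carried out inside the proof of Proposition \ref{prop:sum_zero}.

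With this in hand the proposition is immediate. Suppose first $b \prec c$, so $b \neq c$ and the primed summation on the right-hand side of (\ref{eq:prop_mean}) reduces to an ordinary one. Applying the auxiliary identity to each of $\sum_{u=a}^{b} f(u)$, $\sum_{u=b+1}^{c} f(u)$ and $\sum_{u=a}^{c} f(u)$ and adding,
$$\sum_{u=a}^{b} f(u) + \sum_{u=b+1}^{c} f(u) = \bigl(F(b+1) - F(a)\bigr) + \bigl(F(c+1) - F(b+1)\bigr) = F(c+1) - F(a) = \sum_{u=a}^{c} f(u).$$
If instead $b = c$, the primed sum equals $0$ by definition and the assertion degenerates to $\sum_{u=a}^{c} f(u) = \sum_{u=a}^{c} f(u)$; this is consistent, since $\sum_{u=c+1}^{c} f(u) = F(c+1) - F(c+1) = 0$ anyway by the auxiliary identity (equivalently by Proposition \ref{prop:sum_zero}). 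In this route the hypothesis $b \in \mathbb{Z}_{a,c}$ is not actually used.

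A second, purely combinatorial route would bypass the primitive function: one shows directly that $\mathbb{Z}_{a,c}$ is the disjoint union of $\mathbb{Z}_{a,b}$ and $\mathbb{Z}_{b+1,c}$ (with $\mathbb{Z}_{b+1,c} = \emptyset$ when $b = c$), and then applies axiom \textbf{A4} in the guise of Remark \ref{rem:sum_of_two_intervals} together with Definition \ref{def:sum}. Here the hypothesis $b \in \mathbb{Z}_{a,c}$ is genuinely needed, and the only real work is the verification of the set decomposition: split according to whether $a \preceq c$ or $c \prec a$, and, in the latter case, according to whether $b \in [a,-1]$ or $b \in [0,c]$; the delicate point is the endpoint $b = -1$, where $b+1 = 0$ and the decomposition specializes exactly to Remark \ref{rem:a_bigger_b}. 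This case analysis is the main obstacle in the combinatorial version, whereas the primitive-function argument avoids it entirely, so I would take the first route as the proof and mention the set decomposition only as a remark.
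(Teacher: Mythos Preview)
Your primary route is correct but is not the paper's: the paper proves the proposition by exactly what you call the ``second, purely combinatorial route'', carrying out the case split ($a \preceq c$ versus $c \prec a$, and in the latter case $b \in [a,-1]$ versus $b \in [0,c]$), exhibiting $\mathbb{Z}_{a,c} = \mathbb{Z}_{a,b} \cup \mathbb{Z}_{b+1,c}$ disjointly, and invoking axiom~\textbf{A4} (via Remark~\ref{rem:sum_of_two_intervals}). Your telescoping argument is shorter and, as you observe, makes the hypothesis $b \in \mathbb{Z}_{a,c}$ visibly superfluous; but it rests on the identity $\sum_{u=a}^{b} f(u) = F(b+1) - F(a)$ holding for \emph{all} integers $a,b$, and here some care is needed: (\ref{regeq}) is stated only for $a \leq b$ in the \emph{usual} order, not for $a \preceq b$, so your justification does not literally cover the infinite-interval cases (e.g.\ $a \geq 0$, $b < 0$ with $a\preceq b$, or the sub-sums in your $b \prec a$ computation when $a$ and $b$ have the same sign). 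The paper's own proof of Proposition~\ref{prop:sum_zero} glosses over the same point, so you are at the ambient level of rigor, but the combinatorial proof has the merit of using only \textbf{A4} and Definition~\ref{def:sum} without recourse to the primitive. One small slip: your dichotomy ``$b \prec c$'' versus ``$b = c$'' omits the possibility $c \prec b$, which does occur under the hypothesis (take $c \prec a$ and $b \in [a,-1]$); your telescoping handles that case identically, so the argument is unaffected.
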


\begin{proof}
For $c=b$ or $c=a$ there is nothing to prove. So, let $c\neq b$ and $c\neq a$. The proof consists of two steps.

\begin{itemize}
\item[(1)] $a\prec c$. If $a\prec c$ then $\mathbb{Z}_{a, c}=[a, c]$, that is $[a, c]=[a, b]\cup [b+1, c]$, where $[a, b]\cap [b+1, c]=\emptyset$. 
In view of axiom \textbf{A4}, we obtain
$$
\sum_{u=a}^{c}f(u)=\sum_{u=a}^{b}f(u)+\sum_{u=b+1}^{c}f(u), \quad (b\neq c).
$$

\item[(2)] $c\prec a$. If $c\prec a$ then $\mathbb{Z}_{a, c}=[a, -1]\cup [0, c]$ and, therefore, we have two cases: either $b\in [a, -1]$ or $b\in [0, c]$.

\begin{itemize}
\item[(i)] $b\in [a, -1]$. If $b\in [a, -1]$ then $a\preceq b\preceq -1$, and $[a, -1]=[a, b]\cup [b+1, -1]$, assuming that $[b+1, -1]=\emptyset$ for $b=-1$. Hence we have 
$\mathbb{Z}_{a, c}=[a, -1]\cup [0, c]=[a, b]\cup[b+1, -1]\cup[0, c]=[a, b]\cup([b+1, -1]\cup[0, c])=\mathbb{Z}_{a, b}\cup \mathbb{Z}_{b+1, c}$, 
where $\mathbb{Z}_{a, b}\cap \mathbb{Z}_{b+1, c}=\emptyset$. 
According to the Remark \ref{rem:sum_of_two_intervals}, we get

$$
\sum_{u=a}^{c}f(u)=\sum_{u=a}^{b}f(u)+\sum_{u=b+1}^{c}f(u), \quad (b\neq c).
$$

\item[(ii)] $b\in [0, c]$. If $b\in [0, c]$ then $0\preceq b\prec c$, and $[0, c]=[0, b]\cup [b+1, c]$, that is
$\mathbb{Z}_{a, c}=[a, -1]\cup[0, b]\cup[b+1, c]=([a, -1]\cup[0, b])\cup[b+1, c]=\mathbb{Z}_{a, b}\cup \mathbb{Z}_{b+1, c}$, where $\mathbb{Z}_{a, b}\cap \mathbb{Z}_{b+1, c}=\emptyset$. Hence we obtain
$$
\sum_{u=a}^{c}f(u)=\sum_{u=a}^{b}f(u)+\sum_{u=b+1}^{c}f(u), \quad (b\neq c).
$$
\end{itemize}
\end{itemize}

The proof is completed.
\end{proof}

\begin{corollary}[Proposition \ref{prop:mean}] 
For any regular function $f(x)$
\begin{equation}
\sum_{u\in \mathbb{Z}_{-n,n}}f(u)=\sum_{u=0}^{n}f(u)+\sum_{u=1}^{n}f(-u), \quad \forall \, n.  \label{eq:cor_prop_mean}
\end{equation}
\end{corollary}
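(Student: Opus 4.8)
The plan is to split the part $\mathbb{Z}_{-n,n}$ at the point $-1$ --- which is precisely an instance of Proposition \ref{prop:mean} with $a=-n$, $c=n$ and intermediate point $b=-1$ --- and then to flip the resulting sum over negative indices by means of Proposition \ref{prop:pair_of_numbers}.

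First I would pin down the order relations, taking $n\ge 1$. Since every positive integer precedes every negative one, we have $n\prec -n$, hence $-n\succ n$, and the definition of $\mathbb{Z}_{a,b}$ gives $\mathbb{Z}_{-n,n}=\mathbb{Z}\setminus(n,-n)=[-n,-1]\cup[0,n]$, a union of disjoint segments. In particular $-1\in\mathbb{Z}_{-n,n}$, so Proposition \ref{prop:mean} applies with $b=-1$; as $b+1=0$, it yields $\sum_{u=-n}^{n}f(u)=\sum_{u=-n}^{-1}f(u)+\sum_{u=0}^{n}f(u)$. (Equivalently, this may be read straight off Remark \ref{rem:sum_of_two_intervals} / axiom \textbf{A4} applied to the disjoint union $[-n,-1]\cup[0,n]$.)

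It then remains only to rewrite the first summand. For $n\ge 2$ the integers $-n$ and $-1$ are both negative with $1<n$, hence $-n\prec -1$, and Proposition \ref{prop:pair_of_numbers} applied to the pair $(-n,-1)$ gives $\sum_{u=-n}^{-1}f(u)=\sum_{u=1}^{n}f(-u)$. Substituting this into the previous display produces the asserted identity $\sum_{u\in\mathbb{Z}_{-n,n}}f(u)=\sum_{u=0}^{n}f(u)+\sum_{u=1}^{n}f(-u)$. The degenerate cases $n=1$ (where $\mathbb{Z}_{-1,1}=\{-1,0,1\}$ and the relation $-n\prec-1$ collapses to an equality) and $n=0$ (where both sides equal $f(0)$) are disposed of by direct inspection.

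I do not expect any real obstacle: all the substance lies in Propositions \ref{prop:mean} and \ref{prop:pair_of_numbers}, and the only point needing care is the bookkeeping with the new ordering --- confirming $n\prec -n$ so that $\mathbb{Z}_{-n,n}$ decomposes as $[-n,-1]\cup[0,n]$, and that $-1$ lies in the left piece so that the split point is admissible.
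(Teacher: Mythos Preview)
Your argument is correct and essentially identical to the paper's: both decompose $\mathbb{Z}_{-n,n}=[-n,-1]\cup[0,n]$ and then apply Proposition~\ref{prop:pair_of_numbers} to rewrite $\sum_{u=-n}^{-1}f(u)$ as $\sum_{u=1}^{n}f(-u)$. The only cosmetic difference is that you justify the split via Proposition~\ref{prop:mean} (matching the corollary's label), whereas the paper invokes axiom~\textbf{A4} directly---but you already note this equivalence yourself.
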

\begin{proof}
Substitute $-n$ and $n$ for $a$ and $b$ correspondingly in formula (\ref{def:first_sum_eq}). We have that $\mathbb{Z}_{-n, n}=[-n, -1]\cup [0, n]$ and $[-n, -1]\cap [0, n]=\emptyset$. Then in view of axiom \textbf{A4}, we get 
$$
\sum_{u\in \mathbb{Z}_{-n,n}}f(u)=\sum_{u=0}^{n}f(u)+\sum_{u=-n}^{-1}f(u).
$$
Now using Proposition \ref{prop:pair_of_numbers}, we finally obtain
$$
\sum_{u\in \mathbb{Z}_{-n,n}}f(u)=\sum_{u=0}^{n}f(u)+\sum_{u=1}^{n}f(-u).
$$
\end{proof}

\begin{lemma} \label{lemma3}      
For any regular function $f(x)$
\begin{equation}
\sum_{u=a}^{b}f(u)=-\sum_{u=b+1}^{a-1}f(u), \quad \forall \, a, b\in \mathbb{Z}    \label{eq:lemma3}
\end{equation}
\end{lemma}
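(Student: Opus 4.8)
The plan is to derive \eqref{eq:lemma3} by combining Proposition~\ref{prop:sum_zero} with Proposition~\ref{prop:mean}. The key observation is that \eqref{eq:lemma3} relates the sum over $\mathbb{Z}_{a,b}$ to the sum over its complement (shifted appropriately), and Proposition~\ref{prop:sum_zero} tells us the sum over all of $\mathbb{Z}$ vanishes. More precisely, I would start from the identity $\sum_{u=a}^{a-1}f(u)=0$ of Proposition~\ref{prop:sum_zero}, which holds for every $a\in\mathbb{Z}$, and split the left-hand side at the point $b$ using Proposition~\ref{prop:mean}.

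To carry this out, first I would check that the hypothesis of Proposition~\ref{prop:mean} is met in the relevant configuration: we want to write $\sum_{u=a}^{a-1}f(u)$ as $\sum_{u=a}^{b}f(u)+\mathop{{\sum}'}_{u=b+1}^{a-1}f(u)$, which requires $b\in\mathbb{Z}_{a,a-1}$. But by Proposition~\ref{prop:a_a-1} (and its proof) we have $\mathbb{Z}_{a,a-1}=\mathbb{Z}$, so $b\in\mathbb{Z}_{a,a-1}$ holds automatically for every $b\in\mathbb{Z}$. Hence Proposition~\ref{prop:mean} applies with $c=a-1$, giving
\[
0=\sum_{u=a}^{a-1}f(u)=\sum_{u=a}^{b}f(u)+\mathop{{\sum}'}_{u=b+1}^{a-1}f(u),
\]
and therefore $\sum_{u=a}^{b}f(u)=-\mathop{{\sum}'}_{u=b+1}^{a-1}f(u)$. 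When $b=a-1$ the primed sum is $0$ by convention, which is consistent with $\sum_{u=a}^{a-1}f(u)=0$; for $b\neq a-1$ the primed sum is an ordinary sum $\sum_{u=b+1}^{a-1}f(u)$, so in all cases we obtain \eqref{eq:lemma3}.

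I expect the main (though modest) obstacle to be bookkeeping about the degenerate case and about which ``part'' $\mathbb{Z}_{b+1,a-1}$ actually is: depending on the order relation between $b+1$ and $a-1$ it may be a finite segment or the complement of an interval, and one should make sure the statement $\mathop{{\sum}'}_{u=b+1}^{a-1}f(u)=\sum_{u\in\mathbb{Z}_{b+1,a-1}}f(u)$ is exactly what Proposition~\ref{prop:mean} delivers. Since Proposition~\ref{prop:mean} was already proved by a case analysis covering precisely these possibilities, no new case work is needed here; the only point to state explicitly is the boundary case $b=a-1$, where both sides of \eqref{eq:lemma3} vanish. So the proof reduces to one clean application of Propositions~\ref{prop:a_a-1}, \ref{prop:sum_zero}, and \ref{prop:mean}.
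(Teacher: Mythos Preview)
Your proposal is correct and follows essentially the same route as the paper: set $c=a-1$ in Proposition~\ref{prop:mean} and combine with Proposition~\ref{prop:sum_zero}. You are in fact slightly more careful than the paper in explicitly verifying the hypothesis $b\in\mathbb{Z}_{a,a-1}$ via Proposition~\ref{prop:a_a-1} and in treating the boundary case $b=a-1$.
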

\begin{proof}
Letting $c=a-1$ in (\ref{eq:prop_mean}) and taking into account (\ref{eq:sum_a_a-1_zero}), we immediately get (\ref{eq:lemma3}).
\end{proof}

\begin{remark}
It is worth mentioning here that our sums share a  property with integrals: for example, if we exchange upper and lower summation limits, 
as is done above in Lemma \ref{lemma3}, a minus sign is appeared. In order to relate the reversing of summation limits, $\sum_{a+1}^{b}f(\cdot)=-\sum_{b+1}^{a}f(\cdot)$, 
to integrals, one can  adopt the point of view that integration is performed over oriented intervals.  
\end{remark}

The following two lemmas and their corollaries demonstrate some properties of the re-ordered number line and shed some light on its structure.

\begin{lemma} \label{lemma:1} 
We have
\begin{equation}
\lim_{n\rightarrow\infty}(n, -n)=\emptyset.  \label{eq:1_lemma1}
\end{equation}
\end{lemma}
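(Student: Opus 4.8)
The plan is to interpret the statement $\lim_{n\to\infty}(n,-n)=\emptyset$ through the lens of Definition \ref{def:limit_seq}, which reduces every limit assertion about functions of an integer argument to a statement about a telescoping series. First I would identify precisely what the open interval $(n,-n)$ is in the re-ordered line: by the footnote convention, $(n,-n)=\{x\in\mathbb{Z}: n\prec x\prec -n\}$, and tracing through Definition \ref{def:order}, an integer $x$ satisfies $n\prec x\prec -n$ exactly when either $x$ is positive with $x>n$, or $x$ is negative with $|x|>n$; that is, $(n,-n)=\{x:|x|>n\}=\mathbb{Z}\setminus[-n,n]$ (more precisely $\mathbb{Z}\setminus\mathbb{Z}_{-n,n}$). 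So the claim is that the ``tails'' past $\pm n$ shrink to the empty set as $n\to\infty$, which is the order-theoretic counterpart of the elementary fact that $[-n,n]$ exhausts $\mathbb{Z}$ — except that here exhaustion must be read against the new linear order in which $[-n,n]$ is actually the complement of a finite middle segment $[n+1,-n-1]$... .

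Next I would make the limit rigorous in the sense actually available in this paper. Since set-valued limits are not given an independent meaning elsewhere, the natural reading is: for the characteristic quantity counting membership in $(n,-n)$, or equivalently for each fixed integer $m$, the indicator of the event $m\in(n,-n)$ tends to $0$. I would fix $m\in\mathbb{Z}$ and observe that $m\in(n,-n)$ iff $|m|>n$; hence for all $n\geq|m|$ the integer $m$ lies outside $(n,-n)$, so the indicator sequence $\chi_n(m)$ is eventually $0$ and therefore has limit $0$. Writing $\chi_n(m)$ as the telescoping series $\chi_1(m)+\sum_{u\geq 2}(\chi_u(m)-\chi_{u-1}(m))$ as prescribed by Definition \ref{def:limit_seq}, the partial sums stabilize at $0$, so by axiom \textbf{A1} the sum is $0$, i.e. $\lim_{n\to\infty}\chi_n(m)=0$ for every $m$. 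Since this holds pointwise for each integer, no integer survives in the limit, which is exactly the assertion $\lim_{n\to\infty}(n,-n)=\emptyset$.

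An alternative, perhaps cleaner, route is purely order-theoretic: I would show that the intervals $(n,-n)$ are nested decreasing, $(n+1,-n-1)\subset(n,-n)$, because enlarging $n$ strictly tightens both inequalities $|x|>n$; then $\bigcap_{n\geq 1}(n,-n)=\{x\in\mathbb{Z}:|x|>n \text{ for all } n\}=\emptyset$ since no integer has arbitrarily large absolute value. Interpreting $\lim$ of a nested decreasing family of sets as the intersection (the only sensible convention here), this gives \eqref{eq:1_lemma1} directly. I would probably present both: the nestedness-plus-intersection argument as the conceptual content, and the telescoping/axiom \textbf{A1} argument to show it is consistent with the paper's own machinery for limits.

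The main obstacle is not computational but definitional: the paper never says what it means to take a limit of a sequence of \emph{sets}, so the real work is to pin down the intended meaning and justify that the chosen interpretation is the right one — namely that ``$\lim$'' of the shrinking open tails should be their intersection, and that this is compatible with Definition \ref{def:limit_seq} and axioms \textbf{A1}--\textbf{A4}. Once that interpretive step is fixed, the proof is the short nestedness argument above; I expect the author's proof to simply say that $(n,-n)$ decreases to its empty intersection because any fixed integer eventually drops out.
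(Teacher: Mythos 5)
Your proposal is correct and matches the paper's own argument: the paper likewise observes that for any fixed integer $a$ one has $a\in\mathbb{Z}_{-n,n}$ once $n\geq|a|$, concludes $\lim_{n\to\infty}\mathbb{Z}_{-n,n}=\mathbb{Z}$, and then passes to the complement $(n,-n)=\mathbb{Z}\setminus\mathbb{Z}_{-n,n}$ to get $\emptyset$. Your closing guess about the author's proof is essentially exact; the additional telescoping/indicator discussion is harmless but not needed.
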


\begin{proof}
We have that $n\prec -n$. Then by definition $\mathbb{Z}_{-n, n}=[-n, -1]\cup [0, n]$. Therefore, for arbitrary number $a$ there exists $n_0=|a|$ such that for every natural number $n\geq n_0$ the relation $a\in \mathbb{Z}_{-n, n}$ holds. Whence it follows that
\begin{equation}
\lim_{n\rightarrow\infty}\mathbb{Z}_{-n, n}=\mathbb{Z}. \label{eq:lemma1}
\end{equation}

But $\mathbb{Z}_{-n, n}=\mathbb{Z}\setminus (n, -n)$. Therefore, passing to the limit and taking into account (\ref{eq:lemma1}), we obtain 
$$
\lim_{n\rightarrow\infty}(n, -n)=\emptyset.
$$
This completes the proof.
\end{proof}

Relying on (\ref{eq:lemma1}) and using the analogy of correspondence of partial sums to infinite series, we obtain
\begin{equation}
\lim_{n\rightarrow\infty}\sum_{u\in \mathbb{Z}_{-n, n}}f(u)=\sum_{u\in \mathbb{Z}}f(u). \label{eq:partial_sum}
\end{equation}

\begin{lemma} \label{lemma2} 
For any elementary function $F(x)$ defined on $\mathbb{Z}$
\begin{equation}
\lim_{n\rightarrow\infty}F(n+1+\alpha)=\lim_{n\rightarrow\infty}F(-n+\alpha), \quad \forall \, \alpha\in\mathbb{R}   \label{eq:lemma2}
\end{equation}
\end{lemma}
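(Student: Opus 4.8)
The plan is to derive \eqref{eq:lemma2} as a direct consequence of the summation machinery already in place, specifically Proposition \ref{prop:sum_zero} together with the telescoping identity \eqref{regeq} and the limit axioms \textbf{A1}--\textbf{A2}. First I would fix $\alpha\in\mathbb{R}$ and introduce the shifted function $g(x)=F(x+\alpha)$, observing that if $F$ is elementary then so is $g$, and $g$ serves as a primitive for the regular function $f(x)=g(x+1)-g(x)=F(x+1+\alpha)-F(x+\alpha)$. Thus everything reduces to understanding the behaviour of a primitive $g$ of a regular function at $+\infty$ versus at $-\infty$.

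Next I would compute two partial sums over symmetric windows. Using \eqref{regeq}, for the segment $[0,n]$ we have $\sum_{u=0}^{n}f(u)=g(n+1)-g(0)$, and for the segment $[-n,-1]$ we have $\sum_{u=-n}^{-1}f(u)=g(0)-g(-n)$. Adding these and invoking axiom \textbf{A4} (as in the Corollary to Proposition \ref{prop:mean}, cf.\ \eqref{eq:cor_prop_mean} before reindexing),
\begin{equation}
\sum_{u\in\mathbb{Z}_{-n,n}}f(u)=\bigl(g(n+1)-g(0)\bigr)+\bigl(g(0)-g(-n)\bigr)=g(n+1)-g(-n).
\end{equation}
Now I would pass to the limit $n\to\infty$. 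By \eqref{eq:partial_sum} the left-hand side tends to $\sum_{u\in\mathbb{Z}}f(u)$, and by Proposition \ref{prop:sum_zero} this sum equals $0$. Hence $\lim_{n\to\infty}\bigl(g(n+1)-g(-n)\bigr)=0$, which, since both limits exist by the postulate (every elementary function of integer argument has a definite limit, via Definition \ref{def:limit_seq}), is exactly $\lim_{n\to\infty}g(n+1)=\lim_{n\to\infty}g(-n)$, i.e.\ $\lim_{n\to\infty}F(n+1+\alpha)=\lim_{n\to\infty}F(-n+\alpha)$.

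The main obstacle I anticipate is not the algebra but the justification that the two one-sided limits $\lim_{n\to\infty}g(n+1)$ and $\lim_{n\to\infty}g(-n)$ individually exist in this extended framework, so that vanishing of their difference can be split into their equality; this is precisely where the paper's postulate and Definition \ref{def:limit_seq} must be invoked, and one should be careful that $g(x+1)-g(x)$ is indeed regular in the sense of Definition \ref{def:reg} (it is, with primitive $g$, which is elementary). A secondary subtlety is that \eqref{eq:partial_sum} was stated for $\sum_{u\in\mathbb{Z}_{-n,n}}f(u)$ with the natural ordering of $\mathbb{Z}_{-n,n}$, so I would make sure the regrouping into $[0,n]$ and $[-n,-1]$ is sanctioned by axiom \textbf{A4}, which it is since these segments are disjoint and their union is $\mathbb{Z}_{-n,n}$. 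Once these points are addressed the proof is essentially a three-line computation.
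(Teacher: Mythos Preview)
Your proposal is correct and follows essentially the same route as the paper's own proof: define $f(x)=F(x+1+\alpha)-F(x+\alpha)$, telescope $\sum_{u\in\mathbb{Z}_{-n,n}}f(u)$ over the two pieces $[-n,-1]$ and $[0,n]$ to get $F(n+1+\alpha)-F(-n+\alpha)$, and then invoke \eqref{eq:partial_sum} together with \eqref{eq:sum_zero} to pass to the limit. Your introduction of $g(x)=F(x+\alpha)$ is a purely notational convenience, and your explicit remark about needing the postulate to split the limit of the difference into the difference of limits is a point the paper leaves implicit.
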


\begin{proof}
Let $F(x+1+\alpha)-F(x+\alpha)=f(x)$. According to (\ref{eq:a_bigger_b}), for any natural number $n$
$$
\sum_{u\in \mathbb{Z}_{-n, n}}f(u)=\sum_{u=-n}^{-1}f(u)+\sum_{u=0}^{n}f(u).
$$
Hence, in view of  (\ref{regeq})
$$
\sum_{u\in \mathbb{Z}_{-n, n}}f(u)=F(\alpha)-F(-n+\alpha)+F(n+1+\alpha)-F(\alpha)=F(n+1+\alpha)-F(-n+\alpha).
$$
By definition, the function $f(x)$ is regular. Thus, passing to the limit, $n\rightarrow\infty$, and taking into account (\ref{eq:sum_zero}) and (\ref{eq:partial_sum}), 
we obtain
$$
\lim_{n\rightarrow\infty}F(n+1+\alpha)=\lim_{n\rightarrow\infty}F(-n+\alpha),
$$
which completes the proof.
\end{proof}

\begin{corollary}[Lemma \ref{lemma2}]
Let $F(x)$ be an elementary function defined on $\mathbb{Z}$. Then
$$
\lim_{n\rightarrow\infty}F(-(n+1))=\lim_{n\rightarrow\infty}F(n).
$$
\end{corollary}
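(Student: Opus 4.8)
The plan is to obtain this corollary as a one-line specialization of Lemma \ref{lemma2}. That lemma gives $\lim_{n\rightarrow\infty}F(n+1+\alpha)=\lim_{n\rightarrow\infty}F(-n+\alpha)$ for every $\alpha\in\mathbb{R}$; I would simply substitute $\alpha=-1$. On the left this yields $\lim_{n\rightarrow\infty}F(n)$, and on the right $\lim_{n\rightarrow\infty}F(-n-1)=\lim_{n\rightarrow\infty}F(-(n+1))$, which is precisely the asserted identity. So the bulk of the work is already done inside Lemma \ref{lemma2}, and all that remains is to check that its hypotheses survive the choice $\alpha=-1$.

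For that check, recall that Lemma \ref{lemma2} needs the difference $f(x):=F(x+1+\alpha)-F(x+\alpha)$ to be a regular function. With $\alpha=-1$ this is $f(x)=F(x)-F(x-1)$, and $g(x):=F(x-1)$ is an elementary primitive for it, since $g(x+1)-g(x)=F(x)-F(x-1)=f(x)$ and $F$ elementary on $\mathbb{Z}$ forces $g$ elementary on $\mathbb{Z}$. Hence $f$ is regular in the sense of Definition \ref{def:reg}, Lemma \ref{lemma2} applies verbatim, and no extra hypothesis is needed.

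If one prefers a self-contained derivation rather than an appeal to the $\alpha$-parametrised statement, the same computation can be redone directly: put $f(x)=F(x)-F(x-1)$, note as above that $f$ is regular, and evaluate $\sum_{u\in\mathbb{Z}_{-n,n}}f(u)$ by splitting it via (\ref{eq:a_bigger_b}) and applying (\ref{regeq}) to each piece, which telescopes to $F(n)-F(-n-1)$. Passing to the limit $n\rightarrow\infty$ and using (\ref{eq:sum_zero}) together with (\ref{eq:partial_sum}) again gives $\lim_{n\rightarrow\infty}F(n)=\lim_{n\rightarrow\infty}F(-n-1)$.

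The honest assessment is that there is no real obstacle here: the only thing that could trip one up is bookkeeping with the index shift, namely observing that $-n-1$ and $-(n+1)$ denote the same sequence, so that the two limits are literally equal and not merely equal after reindexing. Everything else is a direct consequence of Lemma \ref{lemma2}.
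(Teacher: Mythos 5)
Your proposal is correct and matches the paper's intent: the corollary is stated without proof precisely because it is the specialization $\alpha=-1$ of Lemma \ref{lemma2}, which is exactly what you do, and your regularity check on $f(x)=F(x)-F(x-1)$ is sound. The self-contained telescoping derivation you sketch as an alternative is just the proof of Lemma \ref{lemma2} itself re-run at $\alpha=-1$, so nothing new is needed.
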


In particular,
\begin{equation}
\lim_{n\rightarrow\infty}(-(n+1))=\lim_{n\rightarrow\infty}n  \label{eq:1_cor_lemma2}
\end{equation}

\begin{equation}
\lim_{n\rightarrow\infty}(n+1)=\lim_{n\rightarrow\infty}(-n),   \label{eq:2_cor_lemma2}
\end{equation}
from which we find
$$
\lim_{n\rightarrow\infty}n=-\frac{1}{2}.
$$

Since $n=\sum_{u=1}^{n}1 \quad \forall n$, then, using axiom \textbf{A1}, $\lim_{n\rightarrow\infty}n=\sum_{u=1}^{\infty}1$
and
\begin{equation}
\sum_{u=1}^{\infty}1=-\frac{1}{2}.  \label{eq:sum_of_1}
\end{equation}


\section{Main theorems}\label{sec:main}

We present a number of general theorems on series summation, several theorems concerning operations with series, and some propositions on limits of unbounded and oscillating functions. The theorems are useful in applications.


\begin{theorem} \label{theorem:inf_sum}
Let $f(x)$ be a regular function such that $f(-x)=f(x)$. Then
\begin{equation}
\sum_{u=1}^{\infty}f(u)=-\frac{f(0)}{2}  \label{eq:inf_sum}
\end{equation}
independently of whether the series is convergent or not in usual sense.
\end{theorem}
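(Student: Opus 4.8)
The plan is to evaluate the symmetric partial sum $\sum_{u\in\mathbb{Z}_{-n,n}}f(u)$ in two different ways and to compare the results. On one hand, the corollary to Proposition \ref{prop:mean} (formula (\ref{eq:cor_prop_mean})) splits this sum as $\sum_{u=0}^{n}f(u)+\sum_{u=1}^{n}f(-u)$, which involves the one-sided partial sums we care about. On the other hand, as $n\to\infty$ this partial sum tends to $\sum_{u\in\mathbb{Z}}f(u)$ by (\ref{eq:partial_sum}), and that total sum is $0$ by Proposition \ref{prop:sum_zero}. The evenness hypothesis $f(-x)=f(x)$ is exactly what makes the two ``halves'' in the splitting coincide up to the single extra term $f(0)$, so equating the two evaluations yields a linear identity that can be solved for $\sum_{u=1}^{\infty}f(u)$.

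Concretely, I would first use $f(-u)=f(u)$ to rewrite the corollary's identity, for every natural $n$, as
$$
\sum_{u\in\mathbb{Z}_{-n,n}}f(u)=\Bigl(f(0)+\sum_{u=1}^{n}f(u)\Bigr)+\sum_{u=1}^{n}f(u)=f(0)+2\sum_{u=1}^{n}f(u).
$$
Then I would pass to the limit $n\to\infty$. On the left-hand side, combining (\ref{eq:partial_sum}) with equality (\ref{eq:sum_zero}) gives $\lim_{n\to\infty}\sum_{u\in\mathbb{Z}_{-n,n}}f(u)=\sum_{u\in\mathbb{Z}}f(u)=0$. On the right-hand side, $\sum_{u=1}^{n}f(u)$ is precisely the sequence of partial sums $S_n$, so axiom \textbf{A1} gives $\lim_{n\to\infty}S_n=\sum_{u=1}^{\infty}f(u)$. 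Equating the two limits yields $0=f(0)+2\sum_{u=1}^{\infty}f(u)$, i.e. the claimed formula $\sum_{u=1}^{\infty}f(u)=-f(0)/2$.

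The one point that deserves care --- and where the statement's phrase ``independently of whether the series is convergent or not'' earns its keep --- is the passage to the limit on the right-hand side. We are not assuming $S_n$ converges in the classical sense; instead we rely on the postulate that every series of a regular function has a definite finite value, together with axiom \textbf{A1}, to give $\lim_{n\to\infty}S_n$ its meaning as the generalized sum. Since $f$ is regular, its primitive $F$ exists, so both $\sum_{u\in\mathbb{Z}}f(u)$ and $\sum_{u=1}^{\infty}f(u)$ are legitimate objects in this framework; the only genuine content of the proof is then the elementary algebraic identity above. I do not expect any real obstacle beyond bookkeeping the $f(0)$ term and the index range in (\ref{eq:cor_prop_mean}) correctly.
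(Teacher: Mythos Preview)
Your proof is correct and is essentially the paper's second proof: the paper also writes $\sum_{u\in\mathbb{Z}_{-n,n}}f(u)=2\sum_{u=1}^{n}f(u)+f(0)$ and passes to the limit via (\ref{eq:partial_sum}) and (\ref{eq:sum_zero}). The only cosmetic difference is that you invoke the corollary (\ref{eq:cor_prop_mean}) directly, whereas the paper re-derives that splitting inline from (\ref{eq:a_bigger_b}) and Proposition~\ref{prop:pair_of_numbers}; the paper also offers an alternative first proof via Lemma~\ref{lemma3} and (\ref{eq:1_cor_lemma2}).
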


\begin{proof}
We present two alternative proofs.
\begin{enumerate}
\item Putting $a=0$ and $b=-n$ in (\ref{eq:lemma3}), we have

$$
\sum_{u=0}^{-n}f(u)=-\sum_{u=-n+1}^{-1}f(u).
$$

Then using  
Proposition \ref{prop:pair_of_numbers}, we get

$$
\sum_{u=0}^{-n}f(u)=-\sum_{u=1}^{n-1}f(-u)
$$
or
\begin{equation}
\sum_{u=1}^{-n}f(u)=-\sum_{u=1}^{n-1}f(-u)-f(0). \label{eq:1_from_lemma2}
\end{equation}

Now substitute $n$ by $n+1$ in (\ref{eq:1_from_lemma2}). Then passing to the limit with regard to (\ref{eq:1_cor_lemma2}), we obtain 

\begin{equation}
\sum_{u=1}^{\infty}f(u)=-\sum_{u=1}^{\infty}f(-u)-f(0).   \label{eq:2_from_lemma3}
\end{equation}

From (\ref{eq:2_from_lemma3}),  taking into account  $f(-x)=f(x)$, the formula (\ref{eq:inf_sum}) follows.

\item According to (\ref{eq:a_bigger_b}), for any natural $n$ we have
$$
\sum_{u=-n}^{n}f(u) = \sum_{u=-n}^{-1}f(u) + \sum_{u=0}^{n}f(u) = \sum_{u=1}^{n}f(u) + f(0) + \sum_{u=-n}^{-1}f(u).
$$
Replace the last sum using Proposition \ref{prop:pair_of_numbers} and get
$$
\sum_{u=-n}^{n}f(u) = \sum_{u=1}^{n}f(u) + f(0) + \sum_{u=1}^{n}f(-u)
$$
By Definintion \ref{def:sum} and since $f(-x)=f(x)$, we have
$$
\sum_{u\in\mathbb{Z}_{-n,n}}f(u) = 2 \sum_{u=1}^{n}f(u) + f(0)
$$
Now passing to the limit and taking into account equations (\ref{eq:sum_zero}) and (\ref{eq:partial_sum}), we obtain the formula (\ref{eq:inf_sum}).
\end{enumerate}
The theorem is proved. 
\end{proof}

Below we present some examples to the above theorem.

\begin{example}

\begin{itemize}
\item[(1)] Convergent series 
\begin{align*}
&\sum_{u=1}^{\infty}\frac{1}{4u^2-1}=\frac{1}{2}, && F(n)=\frac{-1}{2(2n-1)} && \textup{\cite{grad}} \\
&\sum_{u=1}^{\infty}(-1)^{u}\frac{2u^2+1/2}{(2u^2-1/2)^2}=-1, && F(n)=\frac{(-1)^{n-1}}{(2n-1)^2} \\
&\sum_{u=1}^{\infty}\frac{(4^u-1)(u-1/2)-1}{2^{u^2+u+1}}=\frac{1}{4}, && F(n)=\frac{-(n-1/2)}{2^{n^2-n+1}}
\end{align*}
\begin{multline*}
\sum_{u=1}^{\infty}\frac{(u^2+1/4)\tan(1/2)\cos u-u\sin u}{(4u^2-1)^2}=-\frac{\tan(1/2)}{8}, \\
F(n)=\frac{\sin(n-1/2)}{8(2n-1)^2\cos(1/2)} 
\end{multline*}

\item[(2)] Divergent series
\begin{align}
&1-1+1-1+...=\frac{1}{2}, && F(n)=\frac{(-1)^n}{2} &&  \textup{\cite{hardy}} \notag \\
&1+1+1+1+...=-\frac{1}{2}, && F(n)=n-1  && \textup{\cite{titch}} \label{eq:sum_of_ones} \\
&1^{2k}+2^{2k}+3^{2k}...=0, \quad \forall \, k\in\mathbb{N}, && F(n)=B_{2k}(n-1)  && \textup{\cite{titch}} \label{eq:sum_even_powers}
\end{align}
\begin{align}
&1^{2k}-2^{2k}+3^{2k}-...=0, \quad \forall \, k\in\mathbb{N},  && \textup{\cite{hardy}} \label{eq:sum_alt_even_powers}\\
&F(n)=\frac{(-1)^n}{2k+1}\sum_{u=1}^{2k+1}(2^u-1)\binom{2k+1}{u}B_{u}(n-1)^{2k+1-u} \notag
\end{align}
\begin{align}
&\sum_{u=1}^{\infty}(-1)^{u-1}u^{2k-1}\sin u\theta=0, &&  \forall \, k\in\mathbb{N}, && -\pi<\theta<\pi && \textup{\cite{hardy}}  \label{ex:diver_sin} \\
&\sum_{u=1}^{\infty}(-1)^{u-1}u^{2k}\cos u\theta=0, && \forall \, k\in\mathbb{N}, && -\pi<\theta<\pi && \textup{\cite{hardy}} \notag
\end{align}
with the primitive functions for the last two formulas, respectively
\begin{multline*}
F(n)=  
(-1)^n\Biggl[\sum_{u=1}^{k}\beta_{u}\left(n-\frac{1}{2}\right)^{2u-1}\sin\left(n-\frac{1}{2}\right)\theta + \\
 \sum_{u=0}^{k-1}\bar{\beta_{u}}\left(n-\frac{1}{2}\right)^{2u}\cos\left(n-\frac{1}{2}\right)\theta\Biggr]
\end{multline*}
\begin{multline*}
F(n)=   
 (-1)^n\Biggl[\sum_{u=1}^{k}\gamma_{u}\left(n-\frac{1}{2}\right)^{2u-1}\sin\left(n-\frac{1}{2}\right)\theta + \\
\sum_{u=0}^{k}\bar{\gamma_{u}}\left(n-\frac{1}{2}\right)^{2u}\cos\left(n-\frac{1}{2}\right)\theta\Biggr]  
\end{multline*}
where the coefficients $\beta_u, \bar{\beta_u}, \gamma_u, \bar{\gamma_u}$ are taken in such a way so that
$$
F(n+1)-F(n)=(-1)^{n-1}n^{2k-1}\sin n\theta
$$
and
$$
F(n+1)-F(n)=(-1)^{n-1}n^{2k}\cos n\theta
$$
\end{itemize}

In particular, for $\theta=\frac{\pi}{2}$ in (\ref{ex:diver_sin}), we get
\begin{align}
&1^{2k-1}-3^{2k-1}+5^{2k-1}-...=0, \quad \forall \, k \in\mathbb{N} && \textup{\cite{hardy}}  \label{eq:from_diver_sin}
\end{align}
\end{example}

\begin{corollary}[Theorem \ref{theorem:inf_sum}]
For any regular function $f(x)$ 
\begin{equation}
\sum_{u=1}^{\infty}\left(f(u)+f(-u)\right)=-f(0)  \label{eq:1_cor_theor_inf_sum}
\end{equation}
independently of whether the series is convergent or not.
\end{corollary}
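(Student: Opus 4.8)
The plan is to reduce the statement to Theorem \ref{theorem:inf_sum} applied to a suitable even function, or — even more directly — to equation \eqref{eq:2_from_lemma3} obtained along the way in its proof. First I would record a small fact: if $f$ is regular with primitive $F$, then the reflected function $f^-(x):=f(-x)$ is again regular. Indeed, putting $G(x):=-F(1-x)$, one computes $G(x+1)-G(x)=-F(-x)+F(1-x)=f(-x)$, and $G$ is elementary because $F$ is. Consequently, by the Proposition on linear combinations of regular functions, $g(x):=f(x)+f(-x)$ is regular, and it is plainly even, $g(-x)=g(x)$.

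With this in hand the corollary is immediate in two ways. Applying Theorem \ref{theorem:inf_sum} to $g$ gives $\sum_{u=1}^{\infty}g(u)=-g(0)/2$; since $g(0)=2f(0)$ and $\sum_{u=1}^{\infty}g(u)=\sum_{u=1}^{\infty}\bigl(f(u)+f(-u)\bigr)$ by the very definition of $g$, we get $\sum_{u=1}^{\infty}\bigl(f(u)+f(-u)\bigr)=-f(0)$, independently of convergence, as asserted. Alternatively, and without invoking the full theorem, I would start from \eqref{eq:2_from_lemma3}, namely $\sum_{u=1}^{\infty}f(u)=-\sum_{u=1}^{\infty}f(-u)-f(0)$, which was established for an arbitrary regular $f$ (before the evenness hypothesis entered the proof of Theorem \ref{theorem:inf_sum}). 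Both series here have finite values by the postulate — using that $f^-$ is regular — so axiom \textbf{A3} with $\alpha_1=\alpha_2=1$, $f_1=f$, $f_2=f^-$ lets me combine $\sum_{u=1}^{\infty}f(u)+\sum_{u=1}^{\infty}f(-u)$ into the single series $\sum_{u=1}^{\infty}\bigl(f(u)+f(-u)\bigr)$; transposing the first sum in \eqref{eq:2_from_lemma3} then yields $\sum_{u=1}^{\infty}\bigl(f(u)+f(-u)\bigr)=-f(0)$.

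The only step requiring any attention is the verification that $f^-(x)=f(-x)$ is regular, which guarantees that the postulate applies to $\sum_{u=1}^{\infty}f(-u)$ and that axiom \textbf{A3} (or Theorem \ref{theorem:inf_sum}) is legitimately invoked; I expect this short computation with $G(x)=-F(1-x)$ to be the main — albeit minor — obstacle, with everything else being straightforward bookkeeping.
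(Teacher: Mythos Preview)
Your proposal is correct and follows exactly the approach the paper intends: the corollary is stated without proof, and the implicit argument is precisely to apply Theorem~\ref{theorem:inf_sum} to the even regular function $g(x)=f(x)+f(-x)$ (your second route via \eqref{eq:2_from_lemma3} is an equally valid shortcut). Your explicit verification that $f^-(x)=f(-x)$ is regular with primitive $G(x)=-F(1-x)$ is a useful detail the paper leaves tacit.
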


\begin{example}
It is easy to check that the function $f(x)=1/(9x^2-3x-2)$ is regular and 
$$
f(x)+f(-x)=\frac{18x^2-4}{81x^4-45x^2+4}.
$$
Hence, the sum of convergent series
$$
\sum_{u=1}^{\infty}\frac{18u^2-4}{81u^4-45u^2+4}=\frac{1}{2}.
$$
In particular, if we take $f(x)=e^x$, we get
$$
\sum_{u=1}^{\infty}(e^u+e^{-u})=-1
$$
and since $\cosh x=(e^x+e^{-x})/2$ we  obtain
$$
\sum_{u=1}^{\infty}\cosh u=-\frac{1}{2}.
$$
\end{example}

The theorem \ref{theorem:inf_sum} can be generalized as follows. We first introduce the notion of \textit{quasi-even} function.

\begin{definition} \label{def:quasi-even}
The function $f(x)$, $x\in\mathbb{R}$, is called \textsl{quasi-even} if it satisfies the condition $f(-x)=f(x-a)$, $a\in\mathbb{Z}$.
\end{definition}

\begin{theorem}\label{theor:gen_inf_sum}
Let $f(x)$ be a regular quasi-even function that satisfies the condition of definition \ref{def:quasi-even} with $a=\epsilon t$, where $\epsilon=\pm 1$
and $t$ is a fixed natural number, and let $\delta=2^{-1}(1-\epsilon)$. Then we have
\begin{equation}
\sum_{u=1}^{\infty}f(u)=\frac{1}{2}\,\epsilon\sum_{u=\delta}^{t-1+\delta}\left(\lim_{n\rightarrow\infty}f(n-\epsilon u)-f(-\epsilon u)\right)-\frac{1}{2}f(0). \label{eq:gen_inf_sum}
\end{equation}
\end{theorem}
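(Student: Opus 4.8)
The plan is to adapt the two-sided truncation argument used in the second proof of Theorem~\ref{theorem:inf_sum}. I would start from the corollary to Proposition~\ref{prop:mean}, equation~(\ref{eq:cor_prop_mean}), which holds for every natural $n$, and rewrite it by pulling off the term $u=0$:
\[
\sum_{u\in\mathbb{Z}_{-n,n}}f(u)=f(0)+\sum_{u=1}^{n}f(u)+\sum_{u=1}^{n}f(-u).
\]
Since $f$ is quasi-even with $a=\epsilon t$, Definition~\ref{def:quasi-even} gives $f(-u)=f(u-\epsilon t)$, so $\sum_{u=1}^{n}f(-u)=\sum_{u=1}^{n}f(u-\epsilon t)$, and the whole problem reduces to re-expressing this shifted finite sum in terms of $\sum_{u=1}^{n}f(u)$ plus correction terms that survive passage to the limit.

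The key step is the finite identity: for $n>t$,
\[
\sum_{u=1}^{n}f(u-\epsilon t)=\sum_{u=1}^{n}f(u)-\epsilon\sum_{u=\delta}^{t-1+\delta}\bigl(f(n-\epsilon u)-f(-\epsilon u)\bigr).
\]
This is proved purely by reindexing the finitely many terms: sliding the summation window by $\mp t$ isolates two blocks of exactly $t$ terms, one clustered near the origin (contributing $\sum f(-\epsilon u)$) and one clustered near $n$ (contributing $\sum f(n-\epsilon u)$), and one checks against~(\ref{regeq}) that the offset $\delta=2^{-1}(1-\epsilon)$ and the overall sign $-\epsilon$ are exactly what is needed so that the case $\epsilon=+1$ (window shifted left, near-$n$ block \emph{subtracted}) and the case $\epsilon=-1$ (window shifted right, near-$n$ block \emph{added}) are both captured by this one formula. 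Substituting it into the display above yields
\[
\sum_{u\in\mathbb{Z}_{-n,n}}f(u)=f(0)+2\sum_{u=1}^{n}f(u)-\epsilon\sum_{u=\delta}^{t-1+\delta}\bigl(f(n-\epsilon u)-f(-\epsilon u)\bigr).
\]

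Finally I would let $n\to\infty$. The left-hand side tends to $\sum_{u\in\mathbb{Z}}f(u)=0$ by~(\ref{eq:partial_sum}) and~(\ref{eq:sum_zero}); the term $\sum_{u=1}^{n}f(u)$ tends to $\sum_{u=1}^{\infty}f(u)$ by axiom~\textbf{A1}; and the correction term, being a fixed finite sum of elementary functions of the integer variable $n$, tends to $\sum_{u=\delta}^{t-1+\delta}\bigl(\lim_{n\to\infty}f(n-\epsilon u)-f(-\epsilon u)\bigr)$, each of these limits existing by our postulate (equivalently by Definition~\ref{def:limit_seq}). Solving the resulting linear equation for $\sum_{u=1}^{\infty}f(u)$ gives precisely~(\ref{eq:gen_inf_sum}); note that for $t=1$, $\epsilon=+1$, $a=0$ (the boundary reading of the hypothesis) one recovers Theorem~\ref{theorem:inf_sum}.

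The main obstacle is the middle step — establishing the finite reindexing identity \emph{uniformly in} $\epsilon=\pm1$: one must correctly identify which $t$-term block lands near $0$ and which near $n$, and verify that the signs and the summation range $[\delta,\,t-1+\delta]$ come out as stated. The safest route is to carry out $\epsilon=+1$ and $\epsilon=-1$ as two separate computations, in the style of the other two-case proofs in Sections~\ref{sec2}--\ref{sec:sums}, and only afterwards observe that both are subsumed by the single formula; the remaining ingredients are routine invocations of~(\ref{regeq}), axiom~\textbf{A1}, Proposition~\ref{prop:pair_of_numbers}, and the already-proved~(\ref{eq:cor_prop_mean}), (\ref{eq:partial_sum}), (\ref{eq:sum_zero}).
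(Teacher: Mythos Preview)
Your proposal is correct. The core step---the finite reindexing identity
\[
\sum_{u=1}^{n}f(u-\epsilon t)=\sum_{u=1}^{n}f(u)-\epsilon\sum_{u=\delta}^{t-1+\delta}\bigl(f(n-\epsilon u)-f(-\epsilon u)\bigr)
\]
---is exactly the computation the paper performs (there written with lower limit $u=0$, which absorbs an extra $f(0)$ via $f(-\epsilon t)=f(0)$). The only difference is in the outer frame: the paper follows the \emph{first} proof of Theorem~\ref{theorem:inf_sum}, starting from~(\ref{eq:1_from_lemma2}) in the form $\sum_{u=1}^{-(n+1)}f(u)=-\sum_{u=0}^{n}f(-u)$ and passing to the limit via~(\ref{eq:1_cor_lemma2}), whereas you follow the \emph{second} proof, starting from the symmetric truncation~(\ref{eq:cor_prop_mean}) and passing to the limit via~(\ref{eq:partial_sum}) and~(\ref{eq:sum_zero}). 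Since the paper itself exhibits these two routes as interchangeable for Theorem~\ref{theorem:inf_sum}, your argument is a legitimate parallel variant rather than a genuinely new idea; neither route buys anything the other does not.
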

\begin{proof}
According to formula (\ref{eq:1_from_lemma2}),
$$
\sum_{u=1}^{-(n+1)}f(u)=-\sum_{u=0}^{n}f(-u).
$$
In view of $f(-x)=f(x-\epsilon t)$, we have
$$
\sum_{u=0}^{n}f(-u)=\sum_{u=0}^{n}f(u-\epsilon t)=\sum_{u=1}^{n}f(u)-\epsilon\sum_{u=\delta}^{t-1+\delta}f(n-\epsilon u)+\epsilon\sum_{u=\delta}^{t-\delta}f(-\epsilon u).
$$
Since 
$$
\sum_{u=\delta}^{t-\delta}f(-\epsilon u)=\sum_{u=\delta}^{t-1+\delta}f(-\epsilon u)+\epsilon f(-\epsilon t)
$$ 
and $f(-\epsilon t)=f(0)$, we get
$$
\sum_{u=0}^{n}f(-u)=\sum_{u=1}^{n}f(u)-\epsilon\sum_{u=\delta}^{t-1+\delta}\left(f(n-\epsilon u)-f(-\epsilon u)\right)+f(0).
$$
Hence
$$
\sum_{u=1}^{-(n+1)}f(-u)=-\sum_{u=1}^{n}f(u)+\epsilon\sum_{u=\delta}^{t-1+\delta}\Bigl(f(n-\epsilon u)-f(-\epsilon u)\Bigr)-f(0).
$$
Passing to the limit and taking into account (\ref{eq:1_cor_lemma2}), we finally obtain
$$
\sum_{u=1}^{\infty}f(u)=\frac{\epsilon}{2}\sum_{u=\delta}^{t-1+\delta}\Bigl(\lim_{n\rightarrow\infty}f(n-\epsilon u)-f(-\epsilon u)\Bigr)-\frac{1}{2}f(0).
$$
and the proof is completed.
\end{proof}

For the special case of $t=1$ the formula (\ref{eq:gen_inf_sum}) reduces to
$$
\sum_{u=1}^{\infty}f(u)=\frac{1}{2}\lim_{n\rightarrow\infty}f(n)-f(0) \qquad (\epsilon=1),
$$
\begin{equation}
\sum_{u=1}^{\infty}f(u)=-\frac{1}{2}\lim_{n\rightarrow\infty}f(n+1) \qquad (\epsilon=-1)  \label{eq:1_theor_gen_inf_sum}
\end{equation}

Formally, for $\epsilon=0$, the formula (\ref{eq:gen_inf_sum}) coincides with the formula (\ref{eq:inf_sum}). So, the theorem \ref{theor:gen_inf_sum} generalizes the theorem \ref{theorem:inf_sum} for the class of quasi-even functions.

Using the proof of theorem \ref{theor:gen_inf_sum}, we obtain the following

\begin{corollary}[Theorem \ref{theor:gen_inf_sum}] \label{cor:gen_theor}
For any regular function $f(x)$ we have 
\begin{equation}
\sum_{u=1}^{\infty}f(u-\epsilon t)=\sum_{u=1}^{\infty}f(u)-\epsilon \sum_{u=\delta}^{t-1+\delta}\left(\lim_{n\rightarrow\infty}f(n-\epsilon u)-f(-\epsilon u)\right). \label{eq:cor_gen_theor}
\end{equation}
\end{corollary}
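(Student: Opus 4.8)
The plan is to extract from the proof of Theorem \ref{theor:gen_inf_sum} precisely those steps that do \emph{not} use the quasi-even hypothesis, and then pass to the limit. The key observation is that the chain of equalities inside that proof contains, as its second step, the finite reindexing
$$
\sum_{u=0}^{n}f(u-\epsilon t)=\sum_{u=1}^{n}f(u)-\epsilon\sum_{u=\delta}^{t-1+\delta}f(n-\epsilon u)+\epsilon\sum_{u=\delta}^{t-\delta}f(-\epsilon u),
$$
which is valid for \emph{every} regular $f$ (equivalently, it is just the regrouping of $F(n+1-\epsilon t)-F(-\epsilon t)$ into consecutive blocks); the quasi-evenness was invoked only to identify $\sum_{u=0}^{n}f(-u)$ with the left-hand side above. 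I would first re-derive this identity, checking the two cases $\epsilon=1$ (so $\delta=0$) and $\epsilon=-1$ (so $\delta=1$) directly: in each case the topmost terms of $\sum_{u=1}^{n}f(u)$ cancel against the shifted block $-\epsilon\sum_{u=\delta}^{t-1+\delta}f(n-\epsilon u)$, and what survives is exactly $f(-\epsilon t)+f(1-\epsilon t)+\cdots+f(n-\epsilon t)$.

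Next I would isolate the $u=0$ term, writing $\sum_{u=0}^{n}f(u-\epsilon t)=f(-\epsilon t)+\sum_{u=1}^{n}f(u-\epsilon t)$, and use the elementary ``extreme term'' reindexing $\sum_{u=\delta}^{t-\delta}f(-\epsilon u)=\sum_{u=\delta}^{t-1+\delta}f(-\epsilon u)+\epsilon f(-\epsilon t)$, which, like the previous identity, already appears in the proof of Theorem \ref{theor:gen_inf_sum} and holds for any $f$. Substituting this and using $\epsilon^{2}=1$, the two occurrences of $f(-\epsilon t)$ cancel and one obtains the clean finite identity
$$
\sum_{u=1}^{n}f(u-\epsilon t)=\sum_{u=1}^{n}f(u)-\epsilon\sum_{u=\delta}^{t-1+\delta}\bigl(f(n-\epsilon u)-f(-\epsilon u)\bigr).
$$

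Finally I would let $n\to\infty$. The function $g(u):=f(u-\epsilon t)$ is regular, since $F(x-\epsilon t)$ is an elementary primitive for it, so axiom \textbf{A1} yields $\lim_{n\to\infty}\sum_{u=1}^{n}f(u-\epsilon t)=\sum_{u=1}^{\infty}f(u-\epsilon t)$, and likewise $\lim_{n\to\infty}\sum_{u=1}^{n}f(u)=\sum_{u=1}^{\infty}f(u)$. The remaining term is a \emph{fixed} sum of $t$ summands; since $F$ is elementary, $f=F(\cdot+1)-F(\cdot)$ is elementary as well, so by the remark following Definition \ref{def:limit_seq} each $\lim_{n\to\infty}f(n-\epsilon u)$ exists, and the limit may be taken term by term. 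Assembling the three limits gives exactly (\ref{eq:cor_gen_theor}). The only point requiring a little care is this last one: one must note that the number of summands in $\sum_{u=\delta}^{t-1+\delta}$ is $t$, independent of $n$, so that interchanging $\lim_{n\to\infty}$ with that sum is legitimate and each individual limit $\lim_{n\to\infty}f(n-\epsilon u)$ is guaranteed to exist within the framework of the paper; everything else is finite-sum bookkeeping already performed for Theorem \ref{theor:gen_inf_sum}.
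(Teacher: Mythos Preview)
Your proposal is correct and follows exactly the route the paper intends: the paper merely says ``using the proof of theorem \ref{theor:gen_inf_sum}'', and you have correctly isolated the two reindexing identities from that proof that do not invoke the quasi-even hypothesis, cancelled the $f(-\epsilon t)$ terms via $\epsilon^{2}=1$, and then passed to the limit with axiom \textbf{A1}. This is precisely the intended argument, spelled out in full.
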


From the corollary \ref{cor:gen_theor}, if we suppose that $\varphi_1(x), \varphi_2(x),\dots, \varphi_r(x)$ are regular functions, $\alpha_1$, $\alpha_2$,..., $\alpha_r$ are real numbers 
and $\sum_{i=1}^{r}\alpha_i \: \varphi_i(x)=f(x-\epsilon t)$, we get the formula
\begin{equation} \label{eq:cor_sum}
\begin{split}
\sum_{u=1}^{\infty}f(u)=\alpha_1 \sum_{u=1}^{\infty}\varphi_1(u)+\alpha_2 \sum_{u=1}^{\infty}\varphi_2(u)+...
+ \alpha_r \sum_{u=1}^{\infty}\varphi_r(u)+  \\
+\;
\epsilon \sum_{u=\delta}^{t-1+\delta}\left(\lim_{n\rightarrow\infty}f(n-\epsilon u)-f(-\epsilon u)\right). 
\end{split}
\end{equation}


Below we establish several new properties of divergent series, as analogous to those which are known for convergent series.

\begin{theorem} \label{theor:commut}
For any regular function $f(x)$
\begin{equation}
\sum_{u=1}^{\infty}f(u)=\sum_{u=1}^{\infty}f(2u-1)+\sum_{u=1}^{\infty}f(2u)+\frac{1}{2}\lim_{n\rightarrow\infty}\left(f(n)-(-1)^nf(n)\right). \label{eq:theor_commut}
\end{equation}
\end{theorem}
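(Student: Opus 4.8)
The plan is to reduce the identity to an elementary finite decomposition of each partial sum into its odd‑ and even‑indexed parts, and then to pass to the limit using axioms \textbf{A1} and \textbf{A2}.

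First I would record the purely combinatorial identity obtained by splitting $\{1,2,\dots,n\}$ into odd and even integers: for every natural $n$,
$$\sum_{u=1}^{n}f(u)=\sum_{k=1}^{[(n+1)/2]}f(2k-1)+\sum_{k=1}^{[n/2]}f(2k).$$
Since $[(n+1)/2]=[n/2]$ when $n$ is even and $[(n+1)/2]=[n/2]+1$ when $n$ is odd, while in the latter case the extra odd term is $f\bigl(2[(n+1)/2]-1\bigr)=f(n)$, this rewrites as
$$\sum_{u=1}^{n}f(u)=\sum_{k=1}^{[n/2]}f(2k-1)+\sum_{k=1}^{[n/2]}f(2k)+\frac{1-(-1)^{n}}{2}\,f(n),$$
and the last term is precisely $\tfrac12\bigl(f(n)-(-1)^{n}f(n)\bigr)$. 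This step uses only classical finite summation, which is already granted in Section \ref{sec2}.

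Next I would pass to the limit $n\to\infty$. By axiom \textbf{A1} the left‑hand side tends to $\sum_{u=1}^{\infty}f(u)$. Applying axiom \textbf{A2} to the functions $k\mapsto f(2k-1)$ and $k\mapsto f(2k)$ gives $\sum_{k=1}^{[n/2]}f(2k-1)\to\sum_{u=1}^{\infty}f(2u-1)$ and $\sum_{k=1}^{[n/2]}f(2k)\to\sum_{u=1}^{\infty}f(2u)$. Using the linearity of the limit operation, which follows from axiom \textbf{A3} together with Definition \ref{def:limit_seq}, the remainder term tends to $\tfrac12\lim_{n\to\infty}\bigl(f(n)-(-1)^{n}f(n)\bigr)$, and collecting the contributions yields (\ref{eq:theor_commut}). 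The existence of this last limit need not be assumed in advance: once the other three limits in the identity are known to exist (the first two by \textbf{A1}–\textbf{A2} and the postulate, and the even/odd partial sums being telescoping limits in the sense of Definition \ref{def:limit_seq}), the fourth exists by subtraction.

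The main obstacle is the legitimacy of the two auxiliary series $\sum_{u=1}^{\infty}f(2u-1)$ and $\sum_{u=1}^{\infty}f(2u)$: the functions $u\mapsto f(2u-1)$ and $u\mapsto f(2u)$ need not be regular even when $f$ is, so one must either invoke Definition \ref{def:limit_seq} to give these limits meaning directly as telescoping sums, or appeal to the enlargement of the admissible class carried out in Section \ref{sec:exten}. Once these objects are granted, axiom \textbf{A2} is exactly the tool that identifies their values with the limits of the even/odd partial sums, and everything else is bookkeeping.
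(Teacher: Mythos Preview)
Your proof is correct and follows essentially the same approach as the paper: you write down the finite identity $\sum_{u=1}^{n}f(u)=\sum_{u=1}^{[n/2]}f(2u-1)+\sum_{u=1}^{[n/2]}f(2u)+\tfrac{1}{2}\bigl(f(n)-(-1)^nf(n)\bigr)$ and then pass to the limit using axioms \textbf{A1} and \textbf{A2}. Your version is actually more careful than the paper's, since you justify the finite identity explicitly and flag the issue of whether $u\mapsto f(2u)$ and $u\mapsto f(2u-1)$ are regular, a point the paper's proof passes over in silence.
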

\begin{proof}
Let us take the equality
$$
\sum_{u=1}^{n}f(u)=\sum_{u=1}^{[n/2]}f(2u-1)+\sum_{u=1}^{[n/2]}f(2u)+\frac{1}{2}\left(f(n)-(-1)^nf(n)\right), \qquad \forall \, n
$$
Now, passing to the limit and taking into account axioms \textbf{A1} and \textbf{A2} we arrive at (\ref{eq:theor_commut}). This completes the proof.
\end{proof}

\begin{corollary}[Theorem \ref{theor:commut}] \label{cor:theor_commut}
If \: $\lim_{n\rightarrow\infty}(-1)^{n}f(n)=0$, then
$$
\sum_{u=1}^{\infty}f(u)=\sum_{u=1}^{\infty}f(2u-1)+\sum_{u=1}^{\infty}f(2u)+\frac{1}{2}\lim_{n\rightarrow\infty}f(n).
$$
\end{corollary}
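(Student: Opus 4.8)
The plan is to derive this corollary directly from Theorem~\ref{theor:commut} by simplifying the limit term under the extra hypothesis. First I would invoke Theorem~\ref{theor:commut}, which holds for every regular $f$ and gives
$$
\sum_{u=1}^{\infty}f(u)=\sum_{u=1}^{\infty}f(2u-1)+\sum_{u=1}^{\infty}f(2u)+\frac{1}{2}\lim_{n\rightarrow\infty}\bigl(f(n)-(-1)^nf(n)\bigr).
$$
It then remains only to show that, when $\lim_{n\to\infty}(-1)^n f(n)=0$, one has $\lim_{n\to\infty}\bigl(f(n)-(-1)^n f(n)\bigr)=\lim_{n\to\infty} f(n)$; substituting this into the display yields the asserted formula.

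The key step is the additivity of the generalized limit of Definition~\ref{def:limit_seq}. I would first note that both $f(n)$ and $(-1)^n f(n)=\cos(\pi n)\,f(n)$ are elementary functions of the integer argument $n$: since $f$ is regular, $f=F(\cdot+1)-F(\cdot)$ with $F$ elementary, so $f$ itself is elementary, and $\cos(\pi n)$ is elementary. Hence by the postulate and Definition~\ref{def:limit_seq} each of $\lim_{n\to\infty} f(n)$, $\lim_{n\to\infty}(-1)^n f(n)$, and $\lim_{n\to\infty}\bigl(f(n)-(-1)^n f(n)\bigr)$ exists and is finite. Expressing each sequence as its associated telescoping series and applying axiom~\textbf{A3} with coefficients $\alpha_1=1$, $\alpha_2=-1$ gives
$$
\lim_{n\to\infty}\bigl(f(n)-(-1)^n f(n)\bigr)=\lim_{n\to\infty} f(n)-\lim_{n\to\infty}(-1)^n f(n),
$$
and the hypothesis that the last limit is $0$ finishes this reduction.

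The only point needing care — the ``main obstacle'', such as it is — is the passage from Definition~\ref{def:limit_seq} to linearity of $\lim_{n\to\infty}$; the rest is a one-line substitution. I would record this linearity once via the telescoping-series device: if $\lim_{n\to\infty}F(n)=A$ and $\lim_{n\to\infty}G(n)=B$, then letting $f_1,f_2$ denote the first-difference sequences of $F,G$ (with $f_i(1)=F(1)$, etc.) one has $\sum_{u=1}^\infty f_1(u)=A$ and $\sum_{u=1}^\infty f_2(u)=B$ by Definition~\ref{def:limit_seq}, whence \textbf{A3} yields $\sum_{u=1}^\infty\bigl(f_1(u)-f_2(u)\bigr)=A-B$, i.e. $\lim_{n\to\infty}\bigl(F(n)-G(n)\bigr)=A-B$. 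Applying this with $F(n)=f(n)$ and $G(n)=(-1)^n f(n)$ completes the proof of the corollary.
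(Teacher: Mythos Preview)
Your proposal is correct and matches the paper's intended route: the corollary is stated without proof immediately after Theorem~\ref{theor:commut}, so the paper treats it as the obvious substitution you describe. Your explicit justification of the linearity of $\lim_{n\to\infty}$ via Definition~\ref{def:limit_seq} and axiom~\textbf{A3} is more careful than the paper itself, which simply asserts this linearity when it is first needed (in the proof of Theorem~\ref{theor:polynom1}).
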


\begin{theorem} \label{theor:assoc}
For any regular function $f(x)$
\begin{equation}
\sum_{u=1}^{\infty}f(u)=\sum_{u=1}^{\infty}\left(f(2u-1)+f(2u)\right)+\frac{1}{2}\lim_{n\rightarrow\infty}\left(f(n)+(-1)^{n}f(n)\right). \label{eq:theor_assoc}
\end{equation}
\end{theorem}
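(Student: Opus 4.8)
The plan is to follow, step for step, the proof of Theorem~\ref{theor:commut}: first isolate a purely finite identity valid for every $n$, obtained by grouping the terms of $\sum_{u=1}^{n}f(u)$ into consecutive pairs, and then pass to the limit, applying axiom~\textbf{A1} to the ungrouped partial sums and axiom~\textbf{A2} to the grouped ones.

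For the finite identity, note that for each $n$ the first $2[n/2]$ terms $f(1),f(2),\dots,f(2[n/2])$ are exactly exhausted by the $[n/2]$ pairs $\bigl(f(2u-1),f(2u)\bigr)$, $u=1,\dots,[n/2]$, so that $\sum_{u=1}^{[n/2]}\bigl(f(2u-1)+f(2u)\bigr)=\sum_{u=1}^{2[n/2]}f(u)$. When $n$ is even nothing is left over (and the empty tail $\sum_{u=n+1}^{n}f(u)$ vanishes by Proposition~\ref{prop:sum_zero}); when $n$ is odd the single term $f(n)$ remains. Since $\frac{1}{2}\bigl(f(n)-(-1)^{n}f(n)\bigr)$ equals $0$ for even $n$ and $f(n)$ for odd $n$, this gives, for all $n$,
\begin{equation*}
\sum_{u=1}^{n}f(u)=\sum_{u=1}^{[n/2]}\bigl(f(2u-1)+f(2u)\bigr)+\tfrac{1}{2}\bigl(f(n)-(-1)^{n}f(n)\bigr).
\end{equation*}
Once the limit has been taken, this correction term can be recast in the form $\frac{1}{2}\bigl(f(n)+(-1)^{n}f(n)\bigr)$ appearing in the statement, because $\lim_{n\to\infty}(-1)^{n}f(n)=0$ for every regular $f$ (see the last paragraph), so the two versions represent the same number.

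For the passage to the limit, observe that $g(u):=f(2u-1)+f(2u)$ is regular: if $F$ is a primitive of $f$, then $G(u):=F(2u-1)$ satisfies $G(u+1)-G(u)=F(2u+1)-F(2u-1)=f(2u-1)+f(2u)=g(u)$. Hence axiom~\textbf{A2} applies and yields $\lim_{n\to\infty}\sum_{u=1}^{[n/2]}g(u)=\sum_{u=1}^{\infty}\bigl(f(2u-1)+f(2u)\bigr)$, axiom~\textbf{A1} yields $\lim_{n\to\infty}\sum_{u=1}^{n}f(u)=\sum_{u=1}^{\infty}f(u)$, and the remaining term, being a function of the integer argument $n$, has a limit by Definition~\ref{def:limit_seq}. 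Letting $n\to\infty$ in the finite identity then produces formula~(\ref{eq:theor_assoc}).

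The main obstacle is the passage to the limit of the grouped partial sums. In this summation theory a sequence and its subsequences need not share the same limit (for instance $\lim_{n}n=-\frac{1}{2}$ while $\lim_{n}2n=-1$), so one cannot simply replace the index $n$ by $2n$ in $\sum_{u=1}^{n}f(u)$; the index $[n/2]$ must be retained and axiom~\textbf{A2}, which was introduced for precisely this situation, invoked. A secondary point needing care is the verification that $g$ is regular, so that \textbf{A2} and our postulate are legitimately applicable, and, if the correction term is wanted exactly as displayed, the identity $\lim_{n\to\infty}(-1)^{n}f(n)=0$, which by Definition~\ref{def:limit_seq} amounts to the rearrangement $\sum_{u=2}^{\infty}(-1)^{u}\bigl(f(u)+f(u-1)\bigr)=f(1)$.
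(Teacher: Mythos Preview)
Your overall strategy---write down a finite identity valid for every $n$ and then pass to the limit via axioms~\textbf{A1} and~\textbf{A2}---is exactly what the paper does (it says only ``analogous to Theorem~\ref{theor:commut}''), and your finite identity
\[
\sum_{u=1}^{n}f(u)=\sum_{u=1}^{[n/2]}\bigl(f(2u-1)+f(2u)\bigr)+\tfrac{1}{2}\bigl(f(n)-(-1)^{n}f(n)\bigr)
\]
is correct. Your extra care in checking that $g(u)=f(2u-1)+f(2u)$ is regular, and your remark about why one must keep the index $[n/2]$ rather than pass to a subsequence, are well taken.

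The genuine gap is your attempt to convert the correction term $\tfrac{1}{2}\bigl(f(n)-(-1)^{n}f(n)\bigr)$ into the form $\tfrac{1}{2}\bigl(f(n)+(-1)^{n}f(n)\bigr)$ appearing in the statement by asserting that $\lim_{n\to\infty}(-1)^{n}f(n)=0$ for \emph{every} regular $f$. This is false: take $f(n)=(-1)^{n-1}$, which is regular with primitive $F(n)=\tfrac{1}{2}(-1)^{n}$, and then $(-1)^{n}f(n)=(-1)^{2n-1}=-1$, so the limit is $-1$, not $0$. In fact, with this $f$ the formula as printed gives $\tfrac{1}{2}=0-\tfrac{1}{2}$, which is absurd, while your finite identity (with the minus sign) gives the correct $\tfrac{1}{2}=0+\tfrac{1}{2}$. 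The discrepancy you noticed is a sign misprint in the displayed formula~(\ref{eq:theor_assoc}); the correction term should read $\tfrac{1}{2}\lim_{n\to\infty}\bigl(f(n)-(-1)^{n}f(n)\bigr)$, identical to that of Theorem~\ref{theor:commut} (indeed the two finite identities have the same remainder, since the grouped and ungrouped $[n/2]$-sums coincide term by term). So drop the unjustified claim and simply record the formula with the corrected sign.
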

\begin{proof}
The proof is analogous to that of theorem \ref{theor:commut}.
\end{proof}

\begin{example}
Consider a couple of simple examples illustrating the above theorem.
\begin{enumerate}
\item Let $f_1(n)=1$ and $f_2(n)=(-1)^{n-1}$. Then according to theorem \ref{theor:assoc}, we have
$$ 
\sum_{u=1}^{\infty}f_1(u)=\sum_{u=1}^{\infty}1=\sum_{u=1}^{\infty}(1+1)+\frac{1}{2}\lim_{n\to\infty}\left(1+(-1)^{n}\right)
$$
and
$$
\sum_{u=1}^{\infty}f_2(u)=\sum_{u=1}^{\infty}(-1)^{u-1}=\sum_{u=1}^{\infty}(1-1)+\frac{1}{2}\lim_{n\to\infty}\left((-1)^{n}+1\right).
$$
But $\lim_{n\to\infty}(-1)^{n}=0$ due to (\ref{propmu}), hence we find
$$
\sum_{u=1}^{\infty}1=2\sum_{u=1}^{\infty}1+\frac{1}{2}, ~~~\mathrm{or}~~ \sum_{u=1}^{\infty}1=-\frac{1}{2}
$$
and 
$$
\sum_{u=1}^{\infty}(-1)^{u-1}=0+\frac{1}{2}=\frac{1}{2}.
$$
These two identities are in agreement with the formula (\ref{eq:inf_sum}) of theorem \ref{theorem:inf_sum}.
\item Let $f(n)=(-1)^{n-1}\,n$. Then agan according to theorem \ref{theor:assoc}, we have
\begin{multline*} 
\sum_{u=1}^{\infty}f(u)=\sum_{u=1}^{\infty}(-1)^{n-1}\,n = 
\sum_{u=1}^{\infty}\left((-1)^{2u-2}\,(2u-1)+(-1)^{2u-1}\,2u\right)  \\
+\frac{1}{2}\lim_{n\to\infty}\left((-1)^{n-1}\,n+n\right) 
\end{multline*}
or
$$ 
\sum_{u=1}^{\infty}(-1)^{n-1}\,n = 
\sum_{u=1}^{\infty}(-1)+\frac{1}{2}\lim_{n\to\infty}\left((-1)^{n-1}\,n+n\right).
$$
But from theorem \ref{theor:polynom1} that we establish below we have $\lim_{n\to\infty}(-1)^{n-1}\,n=0$ and
$$ 
\sum_{u=1}^{\infty}(-1)^{n-1}\,n = 
-\sum_{u=1}^{\infty}1+\frac{1}{2}\lim_{n\to\infty}n.
$$
From (\ref{eq:2_cor_lemma2}) we have $\lim_{n\to\infty}n=-1/2$, and finally we obtain
$$
\sum_{u=1}^{\infty}(-1)^{n-1}\,n = 1-2+3-4+\dots = \frac{1}{4}.
$$
The same result follows immediately from the theorem \ref{theor:alt_arith} below.
\end{enumerate}
\end{example}

\begin{remark}
We would like to note that our summation method allows us to write 
$\left(1-1+1-1+\dots\right)^2=1-2+3-4+\dots$, since both sides are equal to 1/4 with our method.
So, the last formula in the above example can be obtained if we raise both sides of $1-1+1-1+\dots=1/2$ to the second power, 
or, in other words, if we consider the Cauchy product of $(-1)^n$ with itself.
Indeed, the Cauchy product of two infinite series 
$$
\left(\sum_{n=0}^{\infty}f(n)\right)\cdot\left(\sum_{n=0}^{\infty}g(n)\right)=\sum_{n=0}^{\infty}h(n),
~~\mathrm{where}~~ h(n)=\sum_{k=0}^{n}f(k)g(n-k), 
$$
for $n=0,1,2,\dots$, is defined even when both of the series are divergent. 
Taking $f_1(n)=f_2(n)=(-1)^n$, we have
$$
h(n)=\sum_{k=0}^{n}f_1(k)f_2(n-k)=\sum_{k=0}^{n}(-1)^k(-1)^{n-k}=(-1)^n(n+1)
$$
such that the product series
$$
\sum_{n=0}^{\infty}h(n)=\sum_{n=0}^{\infty}(-1)^n(n+1)=1-2+3-4+\dots
$$
Thus we get that
$\left(1-1+1-1+\dots\right)^2=1-2+3-4+\dots$
with both the sides equal to 1/4.
As is known, summation methods that are linear, stable, and respect the Cauchy product of two series and sums $1-1+1-1+\dots=1/2$, should also sum $1-2+3-4+\dots = 1/4$, as our method does.
\end{remark}

\begin{theorem} \label{theor:distrib}
Let $f(x)$ be a regular function and let $\sum_{u=1}^{\infty}f(u)=A$ and $\sum_{u=1}^{\infty}(-1)^{u-1}f(u)=B$. Then
\begin{equation}
\sum_{u=1}^{\infty}f(2u)=\frac{1}{2}(A-B)  \label{eq:1_theor_distrib}
\end{equation}
and
\begin{equation}
\sum_{u=1}^{\infty}f(2u-1)=\frac{1}{2}(A+B)-\frac{1}{2}\lim_{n\rightarrow\infty}\left(f(n)-(-1)^nf(n)\right). \label{eq:2_theor_distrib}
\end{equation}
\end{theorem}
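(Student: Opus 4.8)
The plan is to imitate the proof of Theorem \ref{theor:commut} with the series $\sum(-1)^{u-1}f(u)$ in place of $\sum f(u)$, thereby producing a second relation of the same shape, and then to recover (\ref{eq:1_theor_distrib}) and (\ref{eq:2_theor_distrib}) by taking the difference and the sum of the two relations.

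First I would record the elementary finite identity obtained by separating a partial sum of $(-1)^{u-1}f(u)$ into its odd- and even-indexed terms: for every $n$,
$$
\sum_{u=1}^{n}(-1)^{u-1}f(u)=\sum_{u=1}^{[n/2]}f(2u-1)-\sum_{u=1}^{[n/2]}f(2u)+\frac{1}{2}\left(f(n)-(-1)^{n}f(n)\right).
$$
The correction term $\tfrac12\bigl(f(n)-(-1)^{n}f(n)\bigr)$ equals $0$ when $n$ is even and $f(n)$ when $n$ is odd, which accounts exactly for the extra odd-indexed summand carried by an odd partial sum; checking this is a one-line parity argument and is the only genuine computation in the proof. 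Passing to the limit $n\to\infty$, applying axiom \textbf{A1} on the left (the partial sums of $(-1)^{u-1}f(u)$ tend to $B$ by hypothesis) and axiom \textbf{A2} to each of the two sums on the right, I arrive at
$$
B=\sum_{u=1}^{\infty}f(2u-1)-\sum_{u=1}^{\infty}f(2u)+\frac{1}{2}\lim_{n\rightarrow\infty}\left(f(n)-(-1)^{n}f(n)\right),
$$
the exact analogue of (\ref{eq:theor_commut}) for the alternating series.

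Finally I would place this relation alongside (\ref{eq:theor_commut}), namely
$$
A=\sum_{u=1}^{\infty}f(2u-1)+\sum_{u=1}^{\infty}f(2u)+\frac{1}{2}\lim_{n\rightarrow\infty}\left(f(n)-(-1)^{n}f(n)\right),
$$
and solve the resulting pair of equations: subtracting the $B$-relation from the $A$-relation cancels $\sum f(2u-1)$ together with the limit term and yields $A-B=2\sum_{u=1}^{\infty}f(2u)$, which is (\ref{eq:1_theor_distrib}); adding the two relations cancels $\sum f(2u)$ and yields $A+B=2\sum_{u=1}^{\infty}f(2u-1)+\lim_{n\rightarrow\infty}\bigl(f(n)-(-1)^{n}f(n)\bigr)$, which is (\ref{eq:2_theor_distrib}). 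I do not expect any real obstacle: the argument is essentially the same as for Theorem \ref{theor:commut}, and the only points requiring care are the bookkeeping of the odd/even split (the boundary term above) and the tacit assumption — already present in Theorem \ref{theor:commut} — that the series $\sum f(2u-1)$ and $\sum f(2u)$ are themselves summable, so that axiom \textbf{A2} applies.
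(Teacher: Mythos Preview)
Your proof is correct and essentially the same as the paper's: both rest on the finite identities separating $\sum_{u=1}^{n}f(u)$ and $\sum_{u=1}^{n}(-1)^{u-1}f(u)$ into the partial sums $\sum_{u=1}^{[n/2]}f(2u)$, $\sum_{u=1}^{[n/2]}f(2u-1)$ plus the correction $\tfrac12\bigl(f(n)-(-1)^nf(n)\bigr)$, then pass to the limit via axioms \textbf{A1} and \textbf{A2}. The only cosmetic difference is the order of operations: the paper first adds and subtracts the two finite identities to isolate $\sum_{u=1}^{[n/2]}f(2u)$ and $\sum_{u=1}^{[n/2]}f(2u-1)$ on the left and \emph{then} takes limits, whereas you take limits first (reproving the analogue of (\ref{eq:theor_commut}) for the alternating series) and then solve the linear system for $\sum f(2u)$ and $\sum f(2u-1)$; the paper's ordering has the minor advantage that each limiting identity already has a single $[n/2]$-sum on the left, so axiom \textbf{A2} applies without needing to assume separately that $\sum f(2u-1)$ and $\sum f(2u)$ are summable.
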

\begin{proof}
We have the following equalities
$$
\sum_{u=1}^{[n/2]}f(2u)=\frac{1}{2}\left[\sum_{u=1}^{n}f(u)-\sum_{u=1}^{n}(-1)^{u-1}f(u)\right], \quad \forall \, n
$$
and
$$
\sum_{u=1}^{[n/2]}f(2u-1)=\frac{1}{2}\left[\sum_{u=1}^{n}f(u)+\sum_{u=1}^{n}(-1)^{u-1}f(u)-\left(f(n)-(-1)^nf(n)\right)\right], \quad \forall \, n.
$$
Then taking the limit, and relying on axioms \textbf{A1} and \textbf{A2}, we obtain the formulas (\ref{eq:1_theor_distrib}) and (\ref{eq:2_theor_distrib}). 
\end{proof}

Using the proofs of theorems \ref{theor:commut} and \ref{theor:distrib} we get the following
\begin{proposition}
For any regular function $f(x)$ such that $f(2x)=d \, f(x), \; d\in\mathbb{R}$, we have
$$
\sum_{u=1}^{\infty}(-1)^{u-1}f(u)=(1-2d) \sum_{u=1}^{\infty}f(u)
$$
and
$$
\sum_{u=1}^{\infty}f(2u-1)=(1-d) \sum_{u=1}^{\infty}f(u) - \frac{1}{2}\lim_{n\rightarrow\infty}\left(f(n)-(-1)^nf(n)\right).
$$
\end{proposition}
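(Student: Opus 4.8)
The plan is to reduce everything to linear algebra by feeding the hypothesis $f(2x)=d\,f(x)$ into Theorems \ref{theor:commut} and \ref{theor:distrib}; no genuinely new summation is required.

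First I would record the key consequence of the hypothesis. Since $f$ is regular and $f(2x)=d\,f(x)$ holds identically, the function $x\mapsto f(2x)$ coincides with $d\,f(x)$, which is regular by the proposition on linear combinations of regular functions; hence by axiom \textbf{A3} (scalar multiplication),
$$\sum_{u=1}^{\infty}f(2u)=\sum_{u=1}^{\infty}d\,f(u)=d\sum_{u=1}^{\infty}f(u).$$
Abbreviate $A=\sum_{u=1}^{\infty}f(u)$ and $B=\sum_{u=1}^{\infty}(-1)^{u-1}f(u)$, both of which exist by the postulate (this is exactly the hypothesis of Theorem \ref{theor:distrib}).

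For the first identity I would apply formula (\ref{eq:1_theor_distrib}), namely $\sum_{u=1}^{\infty}f(2u)=\tfrac12(A-B)$, and equate it with the value $dA$ just obtained. This gives $2dA=A-B$, i.e. $B=(1-2d)A$, which is precisely $\sum_{u=1}^{\infty}(-1)^{u-1}f(u)=(1-2d)\sum_{u=1}^{\infty}f(u)$. For the second identity I would substitute $B=(1-2d)A$ into formula (\ref{eq:2_theor_distrib}): since $\tfrac12(A+B)=\tfrac12\bigl(A+(1-2d)A\bigr)=(1-d)A$, formula (\ref{eq:2_theor_distrib}) becomes
$$\sum_{u=1}^{\infty}f(2u-1)=(1-d)\sum_{u=1}^{\infty}f(u)-\frac12\lim_{n\rightarrow\infty}\bigl(f(n)-(-1)^nf(n)\bigr),$$
as claimed. (The same conclusion also follows by replacing $\sum_{u=1}^{\infty}f(2u)$ with $dA$ directly in Theorem \ref{theor:commut}.) The only step requiring any attention --- and it is a mild one --- is the interchange $\sum_{u=1}^{\infty}f(2u)=d\sum_{u=1}^{\infty}f(u)$; this rests on the regularity of $d\,f$ together with axiom \textbf{A3}, so I do not anticipate a real obstacle.
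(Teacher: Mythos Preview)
Your proposal is correct and is essentially the argument the paper has in mind: the paper states the proposition immediately after Theorems \ref{theor:commut} and \ref{theor:distrib} with the remark ``Using the proofs of theorems \ref{theor:commut} and \ref{theor:distrib} we get the following'', and your derivation does exactly this---feeding $\sum_{u=1}^{\infty}f(2u)=dA$ into (\ref{eq:1_theor_distrib}) to get $B=(1-2d)A$, then substituting into (\ref{eq:2_theor_distrib}). The justification of $\sum_{u=1}^{\infty}f(2u)=d\sum_{u=1}^{\infty}f(u)$ via regularity of $d\,f$ and axiom \textbf{A3} is appropriate and matches how the paper handles such steps.
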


\begin{example}
The function $f(n)=n^k$ is regular, as we have $B_{k}(n)-B_{k}(n-1)=n^k$, where 
$$
B_{k}(n)=\frac{1}{k+1}\,\sum_{u=0}^{k}\binom{k+1}{u}B_{u}\,n^{k+1-u} = \sum_{u=1}^{n}u^{k}
$$ 
is the Bernoulli polynomial.

For the values of Riemann zeta function and Dirichlet eta function at negative integers, we know that
$$
\zeta(-k)=\sum_{u=1}^{\infty}u^k=-\frac{B_{k+1}}{k+1} 
~~\mathrm{and}~~ 
\eta(-k)=\sum_{u=1}^{\infty}(-1)^{u-1}u^k=\frac{2^{k+1}-1}{k+1}B_{k+1},
$$
where $B_k$ is the Bernoulli number.

Under the summation sign we have a regular function $n^k$ and we apply theorem \ref{theor:distrib} to find the sums
for the series of $f(2n)$ and $f(2n-1)$. We have
$$
\sum_{u=1}^{\infty}(2u)^k = 2^k+4^k+\dots = \frac{1}{2}\left(-\frac{B_{k+1}}{k+1}-\frac{B_{k+1}}{k+1}\left(2^{k+1}-1\right)\right)
 = -\frac{2^k}{k+1}B_{k+1}.
$$
Particularly, for even $k=2m, \, m\in\mathbb{N}$, we get 
$$
\sum_{u=1}^{\infty}(2u)^{2m} = 2^{2m}+4^{2m}+\dots = -\frac{2^{2m}}{2m+1}B_{2m+1}=0,
$$
which confirms the formula (\ref{eq:sum_even_powers}). For odd $k=2m-1$, we get the formula
$$
\sum_{u=1}^{\infty}(2u)^{2m-1} = 2^{2m-1}+4^{2m-1}+\dots = -\frac{2^{2m-1}}{2m}B_{2m}.
$$

For the series of $f(2n-1)$ we have
\begin{multline*}
\sum_{u=1}^{\infty}(2u-1)^k = 1^k+3^k+5^k+\dots = \frac{1}{2}\left(-\frac{B_{k+1}}{k+1}+\frac{B_{k+1}}{k+1}\left(2^{k+1}-1\right)\right) \\
 - \frac{1}{2}\lim_{n\to\infty}\left(n^k-(-1)^n n^k\right). 
\end{multline*}
To find the limits, we use the statements of theorem \ref{theor:polynom1} and theorem \ref{theor:polynom2} and get
\begin{multline*}
\sum_{u=1}^{\infty}(2u-1)^k = 1^k+3^k+5^k+\dots = \frac{B_{k+1}}{k+1}\left(2^k-1\right) - \frac{1}{2}\frac{(-1)^k}{k+1}  \\
 = \frac{2(2^k-1)B_{k+1}-(-1)^k}{2(k+1)}. 
 \end{multline*}
In particular, for even $k=2m, \, m\in\mathbb{N}$, we get
$$
\sum_{u=1}^{\infty}(2u-1)^{2m} = 1^{2m}+3^{2m}+5^{2m}+\dots = -\frac{1}{2(2m+1)}.
$$
For odd $k=2m-1$, we obtain the formula
$$
\sum_{u=1}^{\infty}(2u-1)^{2m-1} = 1^{2m-1}+3^{2m-1}+5^{2m-1}+\dots = \frac{2(2^{2m-1}-1)B_{2m}+1}{4m}.
$$
Actually, the above formulas are also true for $k=0$. The special case of $k=1$, as well as $k=0$, is in agreement
with the statement of theorem \ref{theor:arith}. 
Applying similar techniques, one can find many other sum formulas for series of this type.
\end{example}

\begin{remark}
From the above example, knowing that $f(x)=x^{k}$ is regular and using (\ref{eq:1_from_lemma2}), we can deduce that all odd Bernoulli numbers, except for $B_1$, equals to zero. 
Indeed, from (\ref{eq:1_from_lemma2}) we have $B_{k}(-n)=(-1)^{k-1}B_{k}(n-1)$, where $B_{k}(n)$ is the Bernoulli polynomial. Therefore,
$$
B_{k}(n)-(-1)^{k-1}B_{k}(-n)=\frac{2}{k+1}\,\sum_{u=0}^{\left[2^{-1}(k-1)\right]}\binom{k+1}{2u+1}B_{2u+1}\,n^{k-2u}=n^{k},
$$
whence it immediately follows that $B_{1}=1/2$ and $B_{2u+1}=0, \ u=1,2,...$.
\end{remark}


Further in the text we will need some statements related to the limits of functions. In section \ref{sec2} we claimed that every function defined on the new number line has a definite limit. Below we give some propositions concerning the limiting behavior of unbounded and oscillating functions.

\begin{theorem} \label{theorem:theta1}
Let $\theta(x)$ be an elementary function such that 
\begin{equation}
\theta(-x)=-\theta(x-\epsilon), \label{eq:1_theor_theta1}
\end{equation} 
where $\epsilon=\{0,1,-1\}$, and let $\delta=(1-\epsilon)/2$.

Then
\begin{equation}
\lim_{n\rightarrow\infty}\theta(n+\delta)=0.  \label{eq:2_theor_theta1}
\end{equation}
\end{theorem}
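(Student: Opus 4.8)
The plan is to reduce the claim about the limit of $\theta(n+\delta)$ to the ``crossing through infinity'' identity of Lemma \ref{lemma2}, exactly as was done for polynomials in the corollary to that lemma. First I would observe that $\theta(x)$ is elementary, so by the postulate (and Definition \ref{def:limit_seq}) the limit $\lim_{n\to\infty}\theta(n+\delta)$ exists; call it $L$. The whole task is then to show $L=0$, and the mechanism for that is to exhibit $L$ as simultaneously equal to some quantity and to its own negative.

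The key step is to apply Lemma \ref{lemma2} to the elementary function $F(x)=\theta(x+\delta)$ (or a suitable shift of it), giving
\begin{equation*}
\lim_{n\to\infty}\theta(n+1+\alpha+\delta)=\lim_{n\to\infty}\theta(-n+\alpha+\delta)
\end{equation*}
for every real $\alpha$. I would choose $\alpha$ so that the two sides line up with the functional equation \eqref{eq:1_theor_theta1}. Concretely, note $\delta=(1-\epsilon)/2$ gives $2\delta-1=-\epsilon$, so $n+1+(\alpha)+\delta$ and $-(\,n - \alpha - \delta\,)$ are related through the substitution $x\mapsto -x$ in $\theta(-x)=-\theta(x-\epsilon)$ precisely when the argument of the left limit is $-\,(\text{argument of }\theta\text{ on the right})+\epsilon$. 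Picking $\alpha=0$ (after absorbing shifts, using that $\lim_{n\to\infty}G(n)=\lim_{n\to\infty}G(n+c)$ for fixed integer $c$, which follows from Definition \ref{def:limit_seq} since the difference telescopes to a sum of finitely many terms) one gets $\lim_{n\to\infty}\theta(n+\delta)$ on the left, and on the right a limit whose argument is of the form $-m+\delta$; then \eqref{eq:1_theor_theta1} with $x=m-\delta$ converts $\theta(-m+\delta)=\theta(-(m-\delta))=-\theta(m-\delta-\epsilon)=-\theta(m-\delta-\epsilon)$, and since $\delta+\epsilon = \delta+(1-2\delta)=1-\delta$, this is $-\theta(m-(1-\delta))=-\theta((m-1)+\delta)$. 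Hence $\lim_{n\to\infty}\theta(n+\delta)=-\lim_{m\to\infty}\theta((m-1)+\delta)=-\lim_{n\to\infty}\theta(n+\delta)=-L$, so $2L=0$ and $L=0$.

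Equivalently, and perhaps more cleanly, I would run the argument directly through the sum identity rather than through Lemma \ref{lemma2}: set $f(x)=\theta(x)-\theta(x-1)$ (so $\theta$ is a primitive of $f$), note $f$ is regular, and compute $\sum_{u\in\mathbb{Z}_{-n,n}}f(u)$ both via \eqref{regeq} — which telescopes to $\theta(n+\delta)$-type boundary terms after using the split \eqref{eq:a_bigger_b} and Proposition \ref{prop:pair_of_numbers} together with \eqref{eq:1_theor_theta1} to fold the negative half onto the positive half — and via \eqref{eq:sum_zero}/\eqref{eq:partial_sum}, which says the whole-$\mathbb{Z}$ sum is $0$. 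Matching the two evaluations in the limit yields $2\lim_{n\to\infty}\theta(n+\delta)=0$. This is essentially the second proof of Theorem \ref{theorem:inf_sum} specialized to the telescoping function, and it handles $\epsilon=0,\pm1$ uniformly once $\delta$ is substituted.

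The main obstacle is bookkeeping with the shift $\delta$ and the $\pm1$ in $\epsilon$: making sure the argument offsets in Lemma \ref{lemma2} (which has the asymmetric pair $n+1+\alpha$ versus $-n+\alpha$) are chosen so that \eqref{eq:1_theor_theta1} applies on the nose, and confirming that the finitely-many-term shifts $\lim_n G(n)=\lim_n G(n+c)$ are legitimate in this framework (they are, by Definition \ref{def:limit_seq}, since $G(n+c)-G(n)$ is a finite sum of differences). Once the indices are aligned, the conclusion $L=-L$ is immediate; no genuine analytic estimate is needed, since existence of the limit is granted by the postulate.
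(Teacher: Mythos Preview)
Your second approach is essentially the paper's: the paper sets $f(x)=\theta(x+\delta)-\theta(x-1+\delta)$ (the $\delta$ in the definition of $f$ is not optional, as you anticipate), verifies that this $f$ is even using $\theta(-x)=-\theta(x-\epsilon)$ together with $\delta+\epsilon=1-\delta$, and then applies Theorem~\ref{theorem:inf_sum} to the telescoped identity $\theta(n+\delta)-\theta(\delta)=\sum_{u=1}^{n}f(u)$, obtaining $\lim_n\theta(n+\delta)=\theta(\delta)-\tfrac12 f(0)=\theta(\delta)-\theta(\delta)=0$.

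Your first approach, however, contains a genuine gap: the shift-invariance $\lim_{n}G(n+c)=\lim_{n}G(n)$ is \emph{false} in this framework. The simplest counterexample is $G(n)=n$: the paper derives $\lim_{n}n=-\tfrac12$, while by (\ref{eq:2_cor_lemma2}) and linearity $\lim_{n}(n+1)=\lim_{n}(-n)=\tfrac12$. Your telescoping justification does not work because Definition~\ref{def:limit_seq} reduces the limit to an infinite (generally divergent) series, and in this calculus adding or removing finitely many initial terms \emph{does} change the value; the correction terms are exactly what appear in Corollary~\ref{cor:gen_theor}. The route via Lemma~\ref{lemma2} is nonetheless salvageable: apply it to $F(x)=\theta(x+\delta)$ with $\alpha=-1$ rather than $\alpha=0$. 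Then the left side is $\lim_n\theta(n+\delta)$ on the nose, and the right side is $\lim_n\theta(-n-1+\delta)=\lim_n\bigl(-\theta(n+1-\delta-\epsilon)\bigr)=-\lim_n\theta(n+\delta)$, giving $L=-L$ directly with no shifts required.
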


\begin{proof}
We put $\theta(x+\delta)-\theta(x-1+\delta)=f(x)$. Then, according to the formula (\ref{regeq}), for any natural number $n$
$$
\theta(n+\delta)-\theta(\delta)=\sum_{u=1}^{n}f(u).
$$
The function $f(x)$ is regular and even, $f(-x)=f(x)$. 
Therefore, taking the limit and using (\ref{eq:inf_sum}), we get
$$
\lim_{n\rightarrow\infty}\theta(n+\delta)-\theta(\delta)=\lim_{n\rightarrow\infty}\sum_{u=1}^{n}f(u)=\sum_{u=1}^{\infty}f(u)=-\frac{f(0)}{2}
$$
and since $f(0)=\theta(\delta)-\theta(-1+\delta)=\theta(\delta)+\theta(\delta)=2\theta(\delta)$, we finally obtain
$$
\lim_{n\rightarrow\infty}\theta(n+\delta)=0,
$$
which completes the proof.
\end{proof}


\begin{proposition} \label{prop:odd_psi}
For any odd elementary function $\psi(x)$ we have
\begin{equation}
\lim_{n\rightarrow\infty}\sum_{u=\delta}^{t-1+\delta}\psi(n+\frac{\epsilon t}{2}-\epsilon u)=0  \label{eq:prop_odd_psi}
\end{equation}
where \ $\epsilon = \{1, -1\}$, $\delta=(1-\epsilon)/2$ and $t\in\mathbb{N}$ is fixed.
\end{proposition}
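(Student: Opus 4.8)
The plan is to reduce the claim to a single application of Theorem~\ref{theorem:theta1} by assembling the finite sum $\sum_{u=\delta}^{t-1+\delta}\psi(x+\frac{\epsilon t}{2}-\epsilon u)$ into one elementary function $\Theta(x)$ and verifying that $\Theta$ satisfies a symmetry relation of the type \eqref{eq:1_theor_theta1}. First I would set
$$
\Theta(x)=\sum_{u=\delta}^{t-1+\delta}\psi\!\left(x+\frac{\epsilon t}{2}-\epsilon u\right),
$$
which is elementary because $\psi$ is and the sum is finite, and the quantity whose limit we want is exactly $\lim_{n\to\infty}\Theta(n)$. To bring this under Theorem~\ref{theorem:theta1} with parameter $\epsilon=0$ (so $\delta=1/2$ there), I would instead look at $\widetilde\Theta(x)=\Theta(x-\tfrac12)$ and show $\widetilde\Theta(-x)=-\widetilde\Theta(x)$, i.e.\ that $\Theta(x)$ is odd about the point $x=-\tfrac12$; equivalently $\Theta(-1-x)=-\Theta(x)$.

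The key computation is this symmetry. Replacing $x$ by $-1-x$ in the definition and using that $\psi$ is odd,
$$
\Theta(-1-x)=\sum_{u=\delta}^{t-1+\delta}\psi\!\left(-1-x+\tfrac{\epsilon t}{2}-\epsilon u\right)
=-\sum_{u=\delta}^{t-1+\delta}\psi\!\left(x+1-\tfrac{\epsilon t}{2}+\epsilon u\right).
$$
The remaining step is to check that, as $u$ runs over the $t$ consecutive values $\delta,\delta+1,\dots,t-1+\delta$, the argument $x+1-\frac{\epsilon t}{2}+\epsilon u$ runs over exactly the same set of $t$ values as $x+\frac{\epsilon t}{2}-\epsilon u$ does, so that the two finite sums coincide and $\Theta(-1-x)=-\Theta(x)$. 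Writing $v=t-1+2\delta-u$ (a bijection of $\{\delta,\dots,t-1+\delta\}$ onto itself, since $\delta\in\{0,1\}$), one has $\epsilon u = \epsilon(t-1+2\delta)-\epsilon v$, and substituting shows $x+1-\frac{\epsilon t}{2}+\epsilon u = x+\frac{\epsilon t}{2}-\epsilon v + \big(1-\epsilon(t-1+2\delta)+\epsilon t - \epsilon t\big)$; since $\epsilon(2\delta)=\epsilon(1-\epsilon)=\epsilon-1$ (because $\epsilon^2=1$), the bracketed correction collapses to $1-\epsilon(t-1)+(\epsilon-1)\cdot(-1)\cdot\dots$, which I expect to vanish after the bookkeeping, giving the desired reindexing.

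The main obstacle is precisely this last index-shuffling: getting the additive constant in the argument to cancel exactly, which forces one to use $\epsilon^2=1$ and the explicit value $\delta=(1-\epsilon)/2$ in tandem, and to keep careful track of whether the reflection lands on integer or half-integer translates. Once $\Theta(-1-x)=-\Theta(x)$ is established, I would apply Theorem~\ref{theorem:theta1} to $\Theta$ with its $\epsilon$ equal to $-1$ (so that relation \eqref{eq:1_theor_theta1} reads $\Theta(-x)=-\Theta(x+1)$, i.e.\ $\Theta(-1-x)=-\Theta(x)$ after relabeling) and $\delta=1$, concluding $\lim_{n\to\infty}\Theta(n+1)=0$, hence $\lim_{n\to\infty}\Theta(n)=0$, which is \eqref{eq:prop_odd_psi}. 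A sanity check on the simplest case $t=1$: there $\Theta(x)=\psi(x+\tfrac{\epsilon}{2}-\epsilon\delta)=\psi(x-\tfrac{\epsilon}{2})$ after using $\tfrac{\epsilon}{2}-\epsilon\delta=-\tfrac{\epsilon}{2}$, an odd function shifted by a half-integer, whose limit is $0$ by Theorem~\ref{theorem:theta1} — consistent.
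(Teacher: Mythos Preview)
Your approach is essentially the paper's: the paper defines $H_\omega(x)=\sum_{u=\delta}^{t-1+\delta}\omega(x-\epsilon u)$ for any $\omega$ with $\omega(-x)=-\omega(x-\epsilon t)$, checks (via the same reindexing you outline) that $H_\omega(-x)=-H_\omega(x-1)$, applies Theorem~\ref{theorem:theta1}, and then specializes to $\omega(x)=\psi(x+\tfrac{\epsilon t}{2})$; your $\Theta$ is exactly this $H_\omega$. Two small corrections to your bookkeeping: the identity $\Theta(-1-x)=-\Theta(x)$ rewrites as $\Theta(-x)=-\Theta(x-1)$, which is the $\epsilon=1$ case of Theorem~\ref{theorem:theta1} (so its $\delta=0$ and you get $\lim_{n\to\infty}\Theta(n)=0$ directly), not the $\epsilon=-1$ case; and the reindexing constant does vanish cleanly since $2\epsilon\delta=\epsilon(1-\epsilon)=\epsilon-1$ gives $1-\epsilon+2\epsilon\delta=0$, while in your $t=1$ sanity check $\tfrac{\epsilon}{2}-\epsilon\delta=\tfrac12$ for both signs of $\epsilon$, so $\Theta(x)=\psi(x+\tfrac12)$.
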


\begin{proof}
Let us take an elementary function $\omega(x)$ such that
\begin{equation}
\omega(-x)=-\omega(x-\epsilon t).  \label{eq:rem_omega1}
\end{equation}
Then, the expression
$$
H_{\omega}(x)=\sum_{u=\delta}^{t-1+\delta}\omega(x-\epsilon u)
$$
satisfies the condition
$H_{\omega}(-x)=-H_{\omega}(x-1)$
and according to theorem \ref{theorem:theta1} 
$$
\lim_{n\rightarrow\infty}H_{\omega}(n)=0.
$$
The function $\psi\left(x+\epsilon t/2\right)$, where $\psi(x)$ is an odd elementary function, satisfies the condition (\ref{eq:rem_omega1}). Indeed,
$$
\psi\left(-x+\frac{\epsilon t}{2}\right)=\psi\left(-\left(x-\frac{\epsilon t}{2}\right)\right)=-\psi\left(x-\frac{\epsilon t}{2}\right).
$$
Therefore, taking $\psi\left(x+\epsilon t/2\right)$ instead of $\omega(x)$ we obtain (\ref{eq:prop_odd_psi}).
\end{proof}

\begin{theorem} \label{theor:vartheta}
Let $\vartheta(x)$ be an elementary function such that
\begin{equation}
\vartheta(-x)=\vartheta(x-\epsilon t)  \label{eq:theor_vartheta}
\end{equation}
where \ $\epsilon = \{1, -1\}$, $\delta=(1-\epsilon)/2$ and $t\in\mathbb{N}$ is fixed.

Then we have
$$
\lim_{n\rightarrow\infty}(-1)^{n}\vartheta(n+\delta)=0
$$
and
$$
\lim_{n\rightarrow\infty}(-1)^{n}\sum_{u=\delta}^{t-1+\delta}\vartheta(n-\epsilon u)=0.
$$
\end{theorem}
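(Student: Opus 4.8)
The plan is to reduce both limits to Theorem \ref{theorem:theta1}, exactly as Proposition \ref{prop:odd_psi} was reduced to it in the odd case, the new ingredient being the handling of the factor $(-1)^{n}$. Since $\cos\pi n=(-1)^{n}$ for $n\in\mathbb{Z}$, for any elementary $G(x)$ the function $\Phi(x)=\cos(\pi x)\,G(x)$ is again elementary and $\Phi(n)=(-1)^{n}G(n)$. Using $\cos\pi(x-1)=-\cos\pi x$ one checks at once that $\Phi$ obeys the hypothesis (\ref{eq:1_theor_theta1}) of Theorem \ref{theorem:theta1} taken with $\epsilon=1$ (hence $\delta=0$) if and only if $G(-x)=G(x-1)$, and in that case Theorem \ref{theorem:theta1} delivers $\lim_{n\to\infty}\Phi(n)=\lim_{n\to\infty}(-1)^{n}G(n)=0$. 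Thus in each of the two assertions it suffices to exhibit an elementary $G$, restricting on $\mathbb{Z}$ to the relevant sequence, with $G(-x)=G(x-1)$.

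I would do the second assertion first, since here the right $G$ is available: take $G(x)=W(x):=\sum_{u=\delta}^{t-1+\delta}\vartheta(x-\epsilon u)$. To see $W(-x)=W(x-1)$ I would treat the two values of $\epsilon$ separately. For $\epsilon=1$ (so $\delta=0$), $W(-x)=\sum_{u=0}^{t-1}\vartheta(-(x+u))=\sum_{u=0}^{t-1}\vartheta(x+u-t)$ by (\ref{eq:theor_vartheta}), and the substitution $v=t-u$ turns this into $\sum_{v=1}^{t}\vartheta(x-v)=W(x-1)$. For $\epsilon=-1$ (so $\delta=1$), $W(-x)=\sum_{u=1}^{t}\vartheta(-(x-u))=\sum_{u=1}^{t}\vartheta(x-u+t)$ by (\ref{eq:theor_vartheta}), and $v=t-u$ gives $\sum_{v=0}^{t-1}\vartheta(x+v)=W(x-1)$. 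The mechanism is that the index range $[\delta,\,t-1+\delta]$ has precisely the length needed for the shift $\epsilon t$ in (\ref{eq:theor_vartheta}) to carry the reflected sum onto the sum translated by one unit. Consequently $\Phi(x)=\cos(\pi x)\,W(x)$ satisfies (\ref{eq:1_theor_theta1}) with $\epsilon=1$, $\delta=0$, and Theorem \ref{theorem:theta1} yields $\lim_{n\to\infty}(-1)^{n}\sum_{u=\delta}^{t-1+\delta}\vartheta(n-\epsilon u)=0$.

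For the first assertion the natural candidate is $G(x)=\vartheta(x+\delta)$; note that $-\epsilon\delta=\delta$ for both admissible $\epsilon$, so this is the one-term ($t=1$) specialization of $W$. Here (\ref{eq:theor_vartheta}) gives $G(-x)=\vartheta(-(x-\delta))=\vartheta(x-\delta-\epsilon t)=G\bigl(x-1-\epsilon(t-1)\bigr)$, so $G(-x)=G(x-1)$ holds outright only when $t=1$, which already settles that case through the reduction above. The main obstacle is reconciling, for $t>1$, the single unit shift required by Theorem \ref{theorem:theta1} with the shift $\epsilon t$ furnished by (\ref{eq:theor_vartheta}). The route I would pursue is induction on $t$: with $W_{j}(x)=\sum_{u=\delta}^{j-1+\delta}\vartheta(x-\epsilon u)$ one has $W_{t}=W$ (done), $W_{1}(x)=\vartheta(x+\delta)$, the telescoping relations $W_{j}(x)-W_{j-1}(x)=\vartheta\bigl(x-\epsilon(j-1+\delta)\bigr)$, and a quasi-even identity $W_{j}(-x)=W_{j}(x-c_{j})$ for suitable integers $c_{j}$ with $c_{t}=1$; peeling off one summand at a time and passing each remainder through the $\cos(\pi x)$ device should bring the claim down to the base case. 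Keeping track of the parities of the successive $\cos\pi(\cdot)$ factors produced in this peeling is the delicate bookkeeping, and is where I expect the real work of the proof to lie.
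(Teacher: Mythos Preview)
Your device of replacing $(-1)^{n}$ by $\cos(\pi x)$ and then feeding the resulting elementary function into Theorem~\ref{theorem:theta1} is exactly what the paper intends by ``the same techniques as for the previous two propositions''; the paper supplies no argument beyond that phrase. Your treatment of the second assertion is complete and correct: the identity $W(-x)=W(x-1)$ is the precise analogue of $H_{\omega}(-x)=-H_{\omega}(x-1)$ in the proof of Proposition~\ref{prop:odd_psi}, and the factor $\cos(\pi x)$ converts the ``quasi-even'' symmetry into the ``quasi-odd'' one required by Theorem~\ref{theorem:theta1}. The $t=1$ instance of the first assertion drops out of the same computation.

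For the first assertion with $t>1$, however, the induction you sketch does not close. Peeling a term off $W_{t}$ gives
$W_{t}(n)-W_{t-1}(n)=\vartheta\bigl(n-\epsilon(t-1+\delta)\bigr)$, so passing from $\lim(-1)^{n}W_{t}(n)=0$ to $\lim(-1)^{n}W_{t-1}(n)=0$ would require knowing $\lim(-1)^{n}\vartheta\bigl(n-\epsilon(t-1+\delta)\bigr)=0$; but that is a shifted copy of precisely the quantity you are trying to establish, and in this framework a shift of the argument is \emph{not} limit-neutral (Proposition~\ref{prop:exten} gives $\lim F(n+1)-\lim F(n)=\lim f(n)$, which need not vanish). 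The intermediate functions $W_{j}$ do satisfy $W_{j}(-x)=W_{j}(x-c_{j})$ with $c_{j}=t-j+1$, yet Theorem~\ref{theorem:theta1} only accepts shifts in $\{0,\pm1\}$, so you cannot invoke it for $j<t$; and feeding $\cos(\pi x)W_{j}(x)$ back through the already-proven second assertion yields information about $\lim\bigl(W_{j}(n)\pm W_{j}(n-1)\bigr)$ rather than about $\lim(-1)^{n}W_{j}(n)$. Since the paper itself provides nothing beyond the words ``same techniques'', there is no detailed argument there to compare against; but whatever closes the $t>1$ case, it is not the straightforward telescoping you describe.
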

\begin{proof}
The proof is in the same way and uses the same techniques as for the previous two propositions.
\end{proof}

\begin{corollary}[Theorem \ref{theor:vartheta}] \label{cor:theor_vartheta}
For any even elementary function $\mu(x)$
\begin{equation}
\lim_{n\rightarrow\infty}(-1)^{n}\sum_{u=\delta}^{t-1+\delta}\mu(n+\frac{\epsilon t}{2}-\epsilon u)=0, \qquad (\epsilon=\pm 1). \label{eq:cor_theor_vartheta}
\end{equation}
\end{corollary}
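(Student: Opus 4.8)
The plan is to deduce the corollary from the second assertion of Theorem \ref{theor:vartheta} by the same device used to obtain Proposition \ref{prop:odd_psi} from Theorem \ref{theorem:theta1}: a half-translation of the argument. The governing observation is that shifting an even function by $\epsilon t/2$ produces a function with precisely the quasi-even symmetry (\ref{eq:theor_vartheta}), after which the conclusion is immediate.

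First I would set $\vartheta(x)=\mu\left(x+\dfrac{\epsilon t}{2}\right)$. Since $\mu$ is elementary and $x\mapsto x+\epsilon t/2$ is affine, $\vartheta$ is again elementary. Next I would check that $\vartheta$ satisfies the hypothesis of Theorem \ref{theor:vartheta} with the same $\epsilon$ and $t$ (hence the same $\delta=(1-\epsilon)/2$): using the evenness of $\mu$,
$$
\vartheta(-x)=\mu\left(-x+\frac{\epsilon t}{2}\right)=\mu\left(-\left(x-\frac{\epsilon t}{2}\right)\right)=\mu\left(x-\frac{\epsilon t}{2}\right)=\mu\left((x-\epsilon t)+\frac{\epsilon t}{2}\right)=\vartheta(x-\epsilon t).
$$

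Then I would invoke the second conclusion of Theorem \ref{theor:vartheta}, namely $\lim_{n\rightarrow\infty}(-1)^{n}\sum_{u=\delta}^{t-1+\delta}\vartheta(n-\epsilon u)=0$, and substitute back the definition of $\vartheta$: for each index $u$ one has $\vartheta(n-\epsilon u)=\mu\left(n-\epsilon u+\dfrac{\epsilon t}{2}\right)=\mu\left(n+\dfrac{\epsilon t}{2}-\epsilon u\right)$, which is exactly the summand in (\ref{eq:cor_theor_vartheta}). This gives the asserted limit.

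I do not expect a genuine analytic obstacle here; the one point that must be handled with care is the bookkeeping — verifying that the translation $x\mapsto x+\epsilon t/2$ stays within the class of elementary functions and that it converts the evenness of $\mu$ into the symmetry hypothesis of Theorem \ref{theor:vartheta} with parameter $a=\epsilon t$ (not $\epsilon t/2$), which is confirmed by the computation displayed above. Everything else is a direct appeal to the already-established theorem.
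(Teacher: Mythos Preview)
Your proposal is correct and is exactly the intended argument: the paper states the corollary without proof, but its derivation is meant to mirror that of Proposition \ref{prop:odd_psi}, i.e., set $\vartheta(x)=\mu(x+\epsilon t/2)$, verify the quasi-even symmetry (\ref{eq:theor_vartheta}), and apply the second conclusion of Theorem \ref{theor:vartheta}. Your bookkeeping on the shift parameter is accurate, and nothing further is needed.
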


\begin{example}
Let us take the equality
$$
\sum_{u=1}^{n}\cos u\theta=-\frac{1}{2}+\left(2\sin\frac{\theta}{2}\right)^{-1}\sin\left(n+\frac{1}{2}\right)\theta
$$
which holds for any natural $n$ and real $\theta$. Then, according to axiom \textbf{A1}, we get
\begin{equation}
\sum_{u=1}^{\infty}\cos u\theta=-\frac{1}{2}+\left(2\sin\frac{\theta}{2}\right)^{-1}\lim_{n\rightarrow\infty}\sin\left(n+\frac{1}{2}\right)\theta. 
\label{cos}
\end{equation}
From the formula (\ref{eq:prop_odd_psi}) for $t=1$ and $\epsilon=1$, we find 
$$
\lim_{n\rightarrow\infty}\psi\left(n+\frac{1}{2}\right)=0.
$$
Now taking this into account in (\ref{cos}), we obtain
\begin{equation}
\sum_{u=1}^{\infty}\cos u\theta=-\frac{1}{2}. \qquad\qquad\qquad\qquad\qquad \textup{\cite{hardy}} \label{ex:cos}
\end{equation}
Then taking the trigonometric identity
$$
\sum_{u=1}^{n}(-1)^{u-1}\cos u\theta=\frac{1}{2}-\left(2\cos\frac{\theta}{2}\right)^{-1}(-1)^{n}\cos\left(n+\frac{1}{2}\right)\theta
$$
and passing to the limit, we get
\begin{equation}
\sum_{u=1}^{\infty}(-1)^{u-1}\cos u\theta=\frac{1}{2}-\left(2\cos\frac{\theta}{2}\right)^{-1}\lim_{n\rightarrow\infty}(-1)^{n}\cos\left(n+\frac{1}{2}\right)\theta. \quad \textup{\cite{hardy}}  \label{alt_cos}
\end{equation}
From the formula (\ref{eq:cor_theor_vartheta}) for $t=1$ and $\epsilon=1$, we find 
$$
\lim_{n\rightarrow\infty}(-1)^{n}\,\mu\left(n+\frac{1}{2}\right)=0.
$$
Now taking this into account in (\ref{alt_cos}), we get
\begin{equation}
\sum_{u=1}^{\infty}(-1)^{u-1}\cos u\theta=\frac{1}{2} \qquad\qquad\qquad\qquad\qquad \textup{\cite{hardy}}  \label{ex:alt_cos}
\end{equation}
\end{example}
The formulas (\ref{ex:cos}) and (\ref{ex:alt_cos}) can also be obtained with use of (\ref{eq:inf_sum}).


\begin{theorem} \label{theor:polynom1}
Suppose $f(x)$ is a polynomial defined over the field of real numbers, $x\in \mathbb{R}$. Then we have
\begin{equation}
\lim_{n\rightarrow\infty}(-1)^{n}f(n)=0.  \label{eq:theor_polynom1}
\end{equation}
\end{theorem}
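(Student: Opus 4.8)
The plan is to realize $(-1)^n f(n)$ as the values of an elementary function $\vartheta(x) = (-1)^x f(x)$ (interpreted via $(-1)^x = \cos \pi x$ on integers, or simply noting that we only need the formula to hold on $\mathbb{Z}$) and to reduce the problem to Theorem \ref{theor:vartheta} or to a direct application of the machinery built from Lemma \ref{lemma2}. More concretely, I would first treat the case where $f(x) = x^k$ is a single monomial, since any polynomial is a finite linear combination of monomials and the limit operation is linear (this linearity follows from Definition \ref{def:limit_seq} together with axiom \textbf{A3}). So the task is: show $\lim_{n\to\infty}(-1)^n n^k = 0$ for each fixed $k \in \mathbb{N}\cup\{0\}$.

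For the monomial case, the key step is to exhibit a primitive: find an elementary $F(x)$ with $F(x+1) - F(x) = (-1)^{x-1} x^k$, so that $(-1)^{n-1} n^k$ is a regular function and Definition \ref{def:limit_seq} (or rather the telescoping identity $F(n) - F(0) = \sum_{u=1}^n f(u)$ from \eqref{regeq}) applies. Such an $F$ exists and has the shape $F(x) = (-1)^x \sum_{j=0}^{k} c_j x^j$ for suitable real coefficients $c_j$ — this is the finite-difference analogue of integrating $(-1)^x x^k$, and one checks that the recursion for the $c_j$ is solvable because the relevant linear system is triangular. Then I would observe that $\vartheta(x) = (-1)^x x^k$ satisfies a relation of the form \eqref{eq:theor_vartheta}: indeed $\vartheta(-x) = (-1)^{-x}(-x)^k = (-1)^x(-1)^k x^k$, so with $\epsilon t$ chosen as an even integer (say $t=2$, $\epsilon=1$) we get $\vartheta(x - \epsilon t) = (-1)^{x}(x-2)^k$, which is not literally $\vartheta(-x)$; hence a cleaner route is to apply Theorem \ref{theor:vartheta} directly to $\vartheta(x) = (-1)^x f(x-a)$ with the shift $a$ absorbing the parity, or simply to run the telescoping argument of Theorem \ref{theorem:theta1}/Lemma \ref{lemma2} by hand using the primitive $F$ just constructed.

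The hard part will be handling the parity factor $(-1)^x$ correctly inside the "elementary function" framework and making sure the chosen primitive is genuinely elementary and genuinely defined on all of $\mathbb{Z}$. The slickest execution: set $g(x) = (-1)^{x-1} f(x)$, construct its elementary primitive $F$ as above so that $g$ is regular, then note $g$ is quasi-even after a shift — one verifies that $g(-x) = (-1)^{-x-1} f(-x)$ and, writing $f(-x)$ as a polynomial, one can massage this into $g(x - a)$ for an appropriate $a \in \mathbb{Z}$ only when $f$ is even or odd; for general $f$, split $f = f_{\mathrm{even}} + f_{\mathrm{odd}}$ and handle the pieces separately, since the limit is linear. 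For $f$ even of degree $2m$, $g(x) = (-1)^{x-1}f(x)$ satisfies $g(-x) = -g(x)\cdot(-1)^{\text{something}}$... this bookkeeping is the genuine obstacle, and I expect the author sidesteps it by invoking Theorem \ref{theor:vartheta} with $\vartheta(x)=(-1)^x f(x)$ and $t$ even, or by a clean induction on $\deg f$.

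Alternatively — and this is probably the cleanest plan — I would argue by induction on the degree of $f$. The base case $\deg f = 0$ is $\lim_{n\to\infty}(-1)^n = 0$, which is \eqref{propmu} (equivalently follows from \eqref{eq:inf_sum} applied to the even regular function $f(x)=(-1)^x$, i.e. to $1-1+1-\cdots$). For the inductive step, given $f$ of degree $k$, write $(-1)^{x-1}f(x) = \Delta F(x)$ where $F(x) = (-1)^x P(x)$ with $\deg P = k$; then $(-1)^n f(n) = -(F(n+1) - F(n)) \cdot(-1)^{?}$... and passing to the limit, using that $\lim F(n+1) = \lim F(-n)$ by Lemma \ref{lemma2}, yields a relation expressing $\lim (-1)^n n^k$ in terms of $\lim (-1)^n n^j$ for $j < k$, all of which vanish by the inductive hypothesis. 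I expect the main obstacle throughout to be purely notational — keeping the shifts and signs consistent — rather than conceptual, since all the analytic content is already packaged in Lemma \ref{lemma2}, Theorem \ref{theorem:inf_sum}, and Theorem \ref{theor:vartheta}.
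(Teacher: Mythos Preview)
Your overall strategy---reduce to monomials by linearity, induct on the degree, and use the base case $\lim_{n\to\infty}(-1)^n = 0$ from \eqref{propmu}---matches the paper's. But your proposed inductive step has a genuine gap for odd exponents. If $F(x)=(-1)^x P(x)$ is the primitive of $(-1)^{x-1}x^k$, then $P(x+1)+P(x)=x^k$, and Lemma~\ref{lemma2} gives $\lim(-1)^n P(-n) = -\lim(-1)^n P(n+1)$. Comparing leading terms, the coefficient of $\lim(-1)^n n^k$ on the two sides is $\tfrac{(-1)^k}{2}$ and $-\tfrac12$ respectively; for $k$ odd these coincide, so the relation is satisfied identically at top degree and yields no information about $\lim(-1)^n n^k$. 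Your remark that ``the bookkeeping is the genuine obstacle'' understates this: it is not a sign-tracking nuisance but a structural degeneracy.

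The paper circumvents this by first proving an auxiliary statement (Proposition~\ref{prop:lim}): $\lim_{n\to\infty}(-1)^n(2n+1)^k=0$ for every $k\in\mathbb{N}$. The even-$k$ case is immediate from Corollary~\ref{cor:theor_vartheta} with $\mu(x)=(2x)^{2m}$, much as you anticipated. The odd-$k$ case, however, requires the identity $\sum_{u=1}^\infty(-1)^{u-1}(2u-1)^{2m-1}=0$ of \eqref{eq:from_diver_sin} (itself obtained from the trigonometric example \eqref{ex:diver_sin} at $\theta=\pi/2$), combined with \eqref{eq:1_theor_gen_inf_sum} applied to the quasi-even function $f(n)=(-1)^n(2n-1)^{2m-1}$. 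Once Proposition~\ref{prop:lim} is available, the binomial expansion of $(2n+1)^k$ drives an induction in which the top-degree coefficient is $2^k\neq 0$ regardless of parity, and the argument closes. You did not invoke \eqref{eq:from_diver_sin} or any substitute for it, and without such an input the odd-degree case does not go through.
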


To prove the Theorem \ref{theor:polynom1}, we need the following

\begin{proposition} \label{prop:lim}
For any natural number $k$
\begin{equation}
\lim_{n\rightarrow\infty}(-1)^{n}(2n+1)^{k}=0. \label{limzero}
\end{equation}
\end{proposition}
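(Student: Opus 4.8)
The plan is to realize $(-1)^n(2n+1)^k$ as the partial-sum sequence of a telescoping series built from an even regular function, so that Theorem \ref{theorem:inf_sum} (in the form $\sum_{u=1}^\infty g(u) = -g(0)/2$) applies. First I would set $\theta(x) = (-1)^x (2x+1)^k$ — more precisely, since we work with elementary functions of a real variable, take $\theta(x) = \cos(\pi x)\,(2x+1)^k$, which agrees with $(-1)^n(2n+1)^k$ on the integers. The key observation is that this $\theta$ satisfies the symmetry hypothesis of Theorem \ref{theorem:theta1}: one checks $\theta(-x) = \cos(\pi x)(-2x+1)^k = (-1)^k\cos(\pi x)(2x-1)^k = (-1)^k\theta(x-1)$ up to the sign $(-1)^k$. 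For $k$ even this is exactly $\theta(-x) = \theta(x-1)$, which is the hypothesis of Theorem \ref{theor:vartheta} with $\epsilon=1$, $t=1$, $\delta=0$, giving $\lim_{n\to\infty}(-1)^n\vartheta(n) = 0$ directly. For $k$ odd one gets $\theta(-x) = -\theta(x-1)$, matching (\ref{eq:1_theor_theta1}) with $\epsilon = 1$, $\delta = 0$, so Theorem \ref{theorem:theta1} yields $\lim_{n\to\infty}\theta(n) = 0$.

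Alternatively, and perhaps more in keeping with the self-contained spirit of the section, I would argue directly. Put $f(x) = \theta(x) - \theta(x-1)$ where $\theta(x) = (-1)^x(2x+1)^k$ interpreted via $\cos\pi x$. Then $f$ is regular with primitive $\theta$, and $\sum_{u=1}^n f(u) = \theta(n) - \theta(0) = (-1)^n(2n+1)^k - 1$. One verifies that $f(-x) = f(x)$ when $k$ is even (respectively that an analogous even function appears after the substitution $x \mapsto x - \tfrac12$ when $k$ is odd): indeed $f(-x) = \theta(-x) - \theta(-x-1) = (-1)^k[\theta(x-1) - \theta(x)] = (-1)^{k+1} f(x)$, so for even $k$ we have $f(-x) = -f(x)$ — an \emph{odd} function — and for odd $k$, $f(-x) = f(x)$. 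In the even-$k$ case $f$ is odd so $f(0) = 0$, and applying Proposition \ref{prop:odd_psi} (or the telescoping argument of Theorem \ref{theorem:theta1}) gives $\lim_{n\to\infty}(-1)^n(2n+1)^k - 1 = \lim \sum_{u=1}^n f(u) = \sum_{u=1}^\infty f(u)$; one then identifies this limit as $-1$ using the shift symmetry, so the stated limit is $0$. In the odd-$k$ case $f$ is even and Theorem \ref{theorem:inf_sum} gives $\sum_{u=1}^\infty f(u) = -f(0)/2$, and a short computation of $f(0) = \theta(0) - \theta(-1) = 1 - (-1)^{-1}(-1)^k = 1 + (-1)^k = 1 - 1 = 0$ for odd $k$ finishes it, again yielding limit $0$.

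Cleaner still: I expect the intended proof is simply to invoke Theorem \ref{theorem:theta1} with $\theta(x) = \cos(\pi x)(2x+1)^k$ after checking (\ref{eq:1_theor_theta1}). The verification $\theta(-x) = -\theta(x-1)$ requires $(-1)^k = -1$, i.e. it works cleanly only for odd $k$; for even $k$ one instead uses Theorem \ref{theor:vartheta} with $\vartheta(x) = (2x+1)^k$ and the explicit factor $(-1)^n$, since $\vartheta(-x) = (-2x+1)^k = (2x-1)^k = \vartheta(x-1)$ holds with $\epsilon=t=1$. Either way the limit is $0$. So the structure is: (i) reduce to checking a functional equation of the form $g(-x) = \pm g(x - \text{integer})$; (ii) apply the appropriate one of Theorems \ref{theorem:theta1}, \ref{theor:vartheta}.

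The main obstacle is bookkeeping the sign $(-1)^k$ correctly and choosing which of the two symmetry theorems to cite for each parity of $k$ — the substance is entirely in those earlier theorems, and once $(-1)^n(2n+1)^k$ is recognized as (a shift of) $\cos(\pi x)$ times a polynomial, there is nothing left to do but match hypotheses. A secondary point worth stating carefully is the passage from the integer sequence $(-1)^n$ to the elementary function $\cos(\pi x)$, since Theorems \ref{theorem:theta1} and \ref{theor:vartheta} are stated for elementary functions of a real variable; this is immediate but should be mentioned. Note also that this proposition is exactly the engine behind Theorem \ref{theor:polynom1}: once $\lim_{n\to\infty}(-1)^n(2n+1)^k = 0$ is known for all $k$, an arbitrary polynomial $f(n)$ is a real-linear combination of the $(2n+1)^k$ (equivalently, expand in the basis $\{(2x+1)^k\}$), and linearity of the limit — guaranteed by axiom \textbf{A3} via Definition \ref{def:limit_seq} — gives $\lim_{n\to\infty}(-1)^n f(n) = 0$.
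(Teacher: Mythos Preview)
Your sign bookkeeping has a systematic error: you dropped the minus sign from $\cos(\pi(x-1)) = -\cos(\pi x)$. With $\theta(x)=\cos(\pi x)(2x+1)^k$ one actually gets $\theta(x-1) = -\cos(\pi x)(2x-1)^k$, so $\theta(-x) = (-1)^{k+1}\theta(x-1)$, not $(-1)^k\theta(x-1)$. This flips every parity claim in your proposal: it is for \emph{even} $k$ that $\theta(-x)=-\theta(x-1)$ (Theorem~\ref{theorem:theta1} applies and gives $\lim\theta(n)=0$ directly), and for \emph{odd} $k$ that $\theta(-x)=\theta(x-1)$. The same swap occurs in your second paragraph: in fact $f(-x)=(-1)^k f(x)$, so $f$ is even for even $k$ and odd for odd $k$. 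Likewise, your third-paragraph route via $\vartheta(x)=(2x+1)^k$ and Theorem~\ref{theor:vartheta} is correct, but it requires $\vartheta(-x)=\vartheta(x-1)$, i.e.\ $(1-2x)^k=(2x-1)^k$, which again holds only for \emph{even} $k$ --- exactly matching what the paper does in that case via \eqref{propmu} with $\mu(x)=(2x)^{2m}$.

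The real gap is the odd-$k$ case. After correcting signs, for odd $k$ both of your routes (Theorem~\ref{theor:vartheta} applied to $\theta$, or Theorem~\ref{theorem:theta1} applied to $\vartheta$) collapse to the conclusion $\lim_{n\to\infty}(2n+1)^k=0$ --- this is \eqref{eq:2_rem_odd_psi}, not \eqref{limzero}. The extra factor $(-1)^n$ does not come for free from the symmetry theorems when $k$ is odd. The paper handles this case by a genuinely different mechanism: it observes that $f(n)=(-1)^n(2n-1)^{2m-1}$ is quasi-even with $\epsilon=-1$, $t=1$, so Theorem~\ref{theor:gen_inf_sum} in the form \eqref{eq:1_theor_gen_inf_sum} gives
\[
\sum_{u=1}^\infty (-1)^{u-1}(2u-1)^{2m-1} \;=\; -\tfrac12\lim_{n\to\infty}(-1)^n(2n+1)^{2m-1},
\]
and then invokes the earlier example \eqref{eq:from_diver_sin} (obtained from \eqref{ex:diver_sin} at $\theta=\pi/2$, with an explicit primitive) to conclude that the left side vanishes. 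That input --- the vanishing of the alternating odd-power series --- is the missing ingredient your proposal does not supply.
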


The complete proofs of theorem \ref{theor:polynom1} and proposition \ref{prop:lim} can be found in \cite{bag-jmr}. 
Nevertheless, we provide the proofs here for the sake of completeness.

\begin{proof}[Proof of Proposition \ref{prop:lim}]
Let us consider the cases of $k$ even and $k$ odd.
\begin{enumerate}
	\item Let $k$ is even, $k=2m$. \newline  
	From the formula (\ref{eq:cor_theor_vartheta}) for $\epsilon=1$ and $t=1$, we have
	\begin{equation}
	\lim_{n\rightarrow\infty}(-1)^{n}\mu\left(n+\frac{1}{2}\right)=0. \label{propmu}
	\end{equation}
	Taking $\mu(n)=\left(2n\right)^{2m}$, we get
	$$
	\lim_{n\rightarrow\infty}(-1)^{n}\mu\left(n+\frac{1}{2}\right)=\lim_{n\rightarrow\infty}(-1)^{n}\left(2n+1\right)^{2m}=0
	$$
	and 
	$$
		\lim_{n\rightarrow\infty}(-1)^{n}(2n+1)^{k}=0, \qquad k=2m.
	$$
	\item Let $k$ is odd, $k=2m-1$. \newline
	From Theorem \ref{theor:gen_inf_sum} for $\epsilon=-1$ and $t=1$, we have (\ref{eq:1_theor_gen_inf_sum}), that is
	\begin{equation}
	\sum_{u=1}^{\infty}f(u)=-\frac{1}{2}\lim_{n\rightarrow\infty}f(n+1).   
	\end{equation}
	The function $f(n)=(-1)^{n}\left(2n-1\right)^{2m-1}$ satisfies the condition of Theorem \ref{theor:gen_inf_sum}. Hence we obtain 
	\begin{equation}
	\sum_{u=1}^{\infty}(-1)^{u-1}\left(2u-1\right)^{2m-1}=-\frac{1}{2}\lim_{n\rightarrow\infty}(-1)^{n}\left(2n+1\right)^{2m-1}.   \label{sumlim}
	\end{equation}
	But according to (\ref{eq:from_diver_sin}) the sum at the left-hand side of (\ref{sumlim}) is equal to $0$. 
	Therefore, we get
	$$
	\sum_{u=1}^{\infty}(-1)^{u-1}\left(2u-1\right)^{2m-1}=-\frac{1}{2}\lim_{n\rightarrow\infty}(-1)^{n}\left(2n+1\right)^{2m-1}=0
	$$
	and finally
	$$
	\lim_{n\rightarrow\infty}(-1)^{n}\left(2n+1\right)^{k}=0, \qquad  k=2m-1.
	$$
\end{enumerate}
The proposition is proved completely.
\end{proof}

Now we are prepared to prove the theorem \ref{theor:polynom1}.

\begin{proof}[Proof of Theorem \ref{theor:polynom1}]
Because the limit of algebraic sum of finite number of sequences equals to
the algebraic sum of limits of  sequences 
$$
\lim_{n\rightarrow\infty}\sum_{u=1}^{m}\alpha_{u}F_{u}(n)=\sum_{u=1}^{m}\alpha_{u}\lim_{n\rightarrow\infty}F_{u}(n),
$$
where $\alpha_{u}$ are real numbers, we have
\begin{eqnarray}
\lim_{n\rightarrow\infty}(-1)^{n}f(n)&=&\lim_{n\rightarrow\infty}(-1)^{n}\left(a_{k}n^{k}+a_{k-1}n^{k-1}+...+a_{1}n+a_{0}\right)= \nonumber \\
&=& 
a_{k}\lim_{n\rightarrow\infty}(-1)^{n}n^{k}+a_{k-1}\lim_{n\rightarrow\infty}(-1)^{n}n^{k-1}+ \ldots +\nonumber \\
&&{}
+a_{1}\lim_{n\rightarrow\infty}(-1)^{n}n + a_{0}\lim_{n\rightarrow\infty}(-1)^{n}. \nonumber
\end{eqnarray}

To prove the theorem, it is sufficient to show that for every non-negative integer value $\sigma$
\begin{equation}
\lim_{n\rightarrow\infty}(-1)^{n}n^{\sigma}=0. \label{sigma}
\end{equation}
The proof is by induction over $\sigma$. 
\begin{enumerate}
	\item Let $\sigma=0$. From the formula (\ref{propmu}), putting $\mu\left(n+\frac{1}{2}\right) \equiv 1$, we immediately obtain that formula (\ref{sigma}) holds true.
	\item Assume now that (\ref{sigma}) holds for all positive integers less than some natural number $k$, $\sigma<k$. 
	Then, from (\ref{limzero}), applying the binomial theorem to $(2n+1)^{k}$, we get the expansion
	\begin{multline*}
	2^{k}\lim_{n\rightarrow\infty}(-1)^{n}n^{k} + 2^{k-1}\binom{k}{1}\lim_{n\rightarrow\infty}(-1)^{n}n^{k-1} +  \\
	2^{k-2}\binom{k}{2}\lim_{n\rightarrow\infty}(-1)^{n}n^{k-2} + \ldots + \lim_{n\rightarrow\infty}(-1)^{n}=0,  
	\end{multline*}
	in which, by the inductive assumption, all terms except for the first are equal to zero. Therefore, the first term will be equal to zero as well
	$$
	2^{k}\lim_{n\rightarrow\infty}(-1)^{n}n^{k}=0
	$$
	and finally
	$$
	\lim_{n\rightarrow\infty}(-1)^{n}n^{k}=0.
	$$
	So, the formula (\ref{sigma}) is also true for $\sigma=k$.
\end{enumerate}		
	Thus, by virtue of the induction, we obtain that the formula (\ref{sigma}) holds for all non-negative integer values $\sigma$. The theorem is proved.
\end{proof}

\begin{theorem} \label{theor:polynom2}
Suppose $f(x)$ is a polynomial defined over the field of real numbers, $x\in \mathbb{R}$. Then, we have
\begin{equation}
\lim_{n\rightarrow\infty}f(n)=\int\limits_{-1}^{0}f(x)dx. \label{eq:theor_polynom2}
\end{equation}
\end{theorem}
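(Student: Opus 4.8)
The plan is to reduce to monomials and then telescope. Both the new limit $f\mapsto\lim_{n\to\infty}f(n)$ (which is a linear functional on polynomials, by the computation that opens the proof of Theorem~\ref{theor:polynom1}) and the map $f\mapsto\int_{-1}^{0}f$ are $\mathbb{R}$-linear, so it suffices to establish (\ref{eq:theor_polynom2}) for the monomials $f(x)=x^{k}$, $k=0,1,2,\dots$. The case $k=0$ is immediate, $\lim_{n\to\infty}1=1=\int_{-1}^{0}dx$, and $k=1$ is the already recorded fact $\lim_{n\to\infty}n=-\tfrac12=\int_{-1}^{0}x\,dx$ (use $n=\sum_{u=1}^{n}1$ and (\ref{eq:sum_of_1})).

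For $k\ge 1$ I would start from the exact finite identity $n^{k}=\sum_{u=1}^{n}\bigl(u^{k}-(u-1)^{k}\bigr)$, valid for every $n\ge 1$, and pass to the limit by Axiom~\textbf{A1} (equivalently, by Definition~\ref{def:limit_seq}), obtaining $\lim_{n\to\infty}n^{k}=\sum_{u=1}^{\infty}\bigl(u^{k}-(u-1)^{k}\bigr)$. Expanding the summand binomially, $u^{k}-(u-1)^{k}=-\sum_{j=0}^{k-1}\binom{k}{j}(-1)^{k-j}u^{j}$, which is a polynomial, hence a regular function; so repeated application of Axiom~\textbf{A3} yields
$$
\lim_{n\to\infty}n^{k}=-\sum_{j=0}^{k-1}\binom{k}{j}(-1)^{k-j}\sum_{u=1}^{\infty}u^{j}.
$$
Each inner series is a value already at our disposal: $\sum_{u=1}^{\infty}1=-\tfrac12$ and, for $j\ge 1$, $\sum_{u=1}^{\infty}u^{j}=\zeta(-j)=-B_{j+1}/(j+1)$ in the convention $B_{1}=\tfrac12$ (so all of these are $-B_{j+1}/(j+1)$); these may be taken as classical, or reproduced inside the present theory exactly as in the Bernoulli example and the remark following it.

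It then remains to evaluate a finite sum. Using $\binom{k}{j}\frac{1}{j+1}=\frac{1}{k+1}\binom{k+1}{j+1}$ rewrites the right-hand side above as $\frac{1}{k+1}\sum_{i=1}^{k}\binom{k+1}{i}(-1)^{k-i+1}B_{i}$, and a short manipulation using the elementary Bernoulli recursion $\sum_{i=0}^{k}\binom{k+1}{i}B_{i}=k+1$ (valid when $B_{1}=\tfrac12$) together with $B_{2m+1}=0$ for $m\ge 1$ collapses this expression to $\frac{(-1)^{k}}{k+1}$. Since $\int_{-1}^{0}x^{k}\,dx=\frac{(-1)^{k}}{k+1}$, this proves (\ref{eq:theor_polynom2}) for $x^{k}$ and hence for all polynomials.

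I expect the Bernoulli-number identity in the last step to be the main obstacle, although it is purely computational; the sign bookkeeping is delicate because of the $B_{1}=+\tfrac12$ convention and the vanishing of the odd-index Bernoulli numbers. The other point requiring care is the passage to the limit and the termwise splitting by \textbf{A3}: one must verify that the summand really is a regular function before invoking the postulate and \textbf{A1}/\textbf{A3}, and one must keep the reindexing of these possibly divergent series under control, since---unlike in classical analysis---shifting a summation index can change the value of a sum in this theory.
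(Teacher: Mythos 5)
Your argument is correct and reaches the right value, but it takes a genuinely different route from the paper's. The paper's proof is an induction on the exponent $\sigma$: it combines the identity relating $\sum_{u\le n}u^{2k}$, $\sum_{u\le n}(-1)^{u-1}u^{2k}$ and $\sum(2u-1)^{2k}$ with axioms \textbf{A1}--\textbf{A2}, the vanishing results (\ref{eq:sum_even_powers}), (\ref{eq:sum_alt_even_powers}) and (\ref{sumlimit}), the limit relations $\lim_{n\to\infty}(2n+1)^{2k}=\lim_{n\to\infty}n^{2k}$ and $\lim_{n\to\infty}(2n+1)^{2k-1}=0$, and the auxiliary binomial identities (\ref{eq:1_proof_polynom2})--(\ref{eq:2_proof_polynom2}); in this way it stays entirely inside \emph{even}-power series, all of which are controlled by Theorem \ref{theorem:inf_sum}. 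You instead telescope $n^{k}=\sum_{u=1}^{n}\bigl(u^{k}-(u-1)^{k}\bigr)$, apply \textbf{A1} and \textbf{A3}, and feed in the values $\sum_{u\ge1}u^{j}=-B_{j+1}/(j+1)$, finishing with the Bernoulli recursion $\sum_{i=0}^{k}\binom{k+1}{i}B_{i}=k+1$ (your sign bookkeeping checks out: the finite sum does collapse to $(-1)^{k}/(k+1)$, as one can confirm for $k=1,2,3$). Your version is shorter, non-inductive, and makes the answer transparent as a single Bernoulli identity.

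The one point you should shore up is the provenance of $\sum_{u=1}^{\infty}u^{j}$ for \emph{odd} $j$. For even $j$ the value $0$ follows from Theorem \ref{theorem:inf_sum}, but $u^{2m-1}$ is odd, so that theorem is silent, and the "obvious" in-theory derivation --- apply \textbf{A1} to the partial sums $\sum_{u=1}^{n}u^{2m-1}=B_{2m-1}(n)$ and take the limit of that polynomial --- is precisely an instance of Theorem \ref{theor:polynom2} and hence circular. The paper sidesteps this by never using odd-power zeta values in its proof. A non-circular repair is available: the partial sums of $\sum(-1)^{u-1}u^{2m-1}$ have the form $c+(-1)^{n}p(n)$ with $p$ a polynomial, so Theorem \ref{theor:polynom1} alone yields $\eta(-(2m-1))=c$, and the relation $\sum f(2u)=2^{2m-1}\sum f(u)=\tfrac12(A-B)$ from Theorem \ref{theor:distrib} then gives $A=\zeta(-(2m-1))=B/(1-2^{2m})=-B_{2m}/(2m)$. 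If you either insert that derivation or explicitly declare the odd negative zeta values as external inputs (as the paper itself does in the example following Theorem \ref{theor:distrib}), your proof is complete.
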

\begin{proof}
To prove the theorem, in analogy to the proof of Theorem \ref{theor:polynom1}, it is sufficient to show that the relation
\begin{equation}
\lim_{n\rightarrow\infty}n^{\sigma}=\frac{(-1)^{\sigma}}{\sigma+1} \label{sigma_fraction}
\end{equation}
holds for every non-negative integer value $\sigma$.

We give here a sketch of the proof.

At first step, we have the following equality
$$
\sum_{u=1}^{n}u^{2k}+\sum_{u=1}^{n}(-1)^{u-1}u^{2k}=2\sum_{u=1}^{[n/2]}(2u-1)^{2k}+\left(n^{2k}-(-1)^{n}n^{2k}\right), \quad \forall \, n, k.
$$
Then taking the limit and relying on axioms \textbf{A1} and \textbf{A2} and formula (\ref{sigma}), we obtain the relation
$$
\sum_{u=1}^{\infty}u^{2k}+\sum_{u=1}^{\infty}(-1)^{u-1}u^{2k}=2\sum_{u=1}^{\infty}(2u-1)^{2k}+\lim_{n\rightarrow\infty}n^{2k}.
$$
	The function $f(n)=\left(2n-1\right)^{2k}$ satisfies the condition of Theorem \ref{theor:gen_inf_sum}. Hence, in view of (\ref{eq:1_theor_gen_inf_sum}), we obtain 
	\begin{equation}
	\sum_{u=1}^{\infty}\left(2u-1\right)^{2k}=-\frac{1}{2}\lim_{n\rightarrow\infty}\left(2n+1\right)^{2k}.    \label{sumlimit}
	\end{equation}
Now according to (\ref{eq:sum_even_powers}), (\ref{eq:sum_alt_even_powers}) and (\ref{sumlimit}), we get
\begin{equation}
\lim_{n\rightarrow\infty}(2n+1)^{2k}=\lim_{n\rightarrow\infty}n^{2k}.  \label{eq:lim_even_powers}
\end{equation}
Then, one shows that 
\begin{equation}
\sum_{u=0}^{2k}(-1)^{u}\: 2^{2k-u}\binom{2k}{u}\frac{1}{2k+1-u}=\frac{1}{2k+1} \label{eq:1_proof_polynom2}
\end{equation}
and
\begin{equation}
\sum_{u=0}^{2k-1}(-1)^{u}\: 2^{2k-1-u}\binom{2k-1}{u}\frac{1}{2k-u}=0. \label{eq:2_proof_polynom2}
\end{equation}
Relying on Proposition \ref{prop:odd_psi} and Lemma \ref{lemma2} (or formula (\ref{eq:2_cor_lemma2})), we obtain
$$
\lim_{n\rightarrow\infty}\psi\left(n+\frac{a}{2}\right)=-\lim_{n\rightarrow\infty}\psi\left(n+1-\frac{a}{2}\right) \qquad \forall \, a \in \mathbb{Z}
$$
and in particular
\begin{equation}
\lim_{n\rightarrow\infty}(2n+1)^{2k-1}=0, \qquad \forall \, k\in \mathbb{N}.  \label{eq:2_rem_odd_psi}
\end{equation}

Then, following the induction over $\sigma$, we assume that the equality (\ref{sigma_fraction}) holds for all non-negative integers less than some natural number $k$. Further, depending on the parity of $k$ one uses the formulas (\ref{eq:1_proof_polynom2}) and (\ref{eq:lim_even_powers}) or formulas (\ref{eq:2_proof_polynom2}) and (\ref{eq:2_rem_odd_psi}), and then proves that (\ref{sigma_fraction}) holds for $\sigma=k$. And since the formula (\ref{sigma_fraction}) holds for $\sigma=0$, by the induction hypothesis it holds for all non-negative integer values of $\sigma$.

This completes the proof.
\end{proof}


Using Theorems \ref{theor:polynom1} and \ref{theor:polynom2}, we establish some formulas for sums of infinite arithmetic series. 

\begin{theorem} \label{theor:arith}
Let $a_1$, $a_2$,...,$a_u$,... be an infinite arithmetic progression, i. e. $a_u=a_1+(u-1)d$ and $d\geq 0$. Then for an infinite arithmetic series we have
\begin{equation}
\sum_{u=1}^{\infty}a_u=\frac{5d-6a_1}{12}. \label{eq:theor_arith}
\end{equation}
\end{theorem}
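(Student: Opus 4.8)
The plan is to reduce the claim to the evaluation of a single definite integral by means of Theorem \ref{theor:polynom2}. First I would put the $n$-th partial sum into closed form: since $a_u = a_1 + (u-1)d$, the elementary finite-sum identity for arithmetic progressions gives
\[
S_n = \sum_{u=1}^{n} a_u = n a_1 + \frac{n(n-1)}{2}\,d = \frac{d}{2}\,n^2 + \left(a_1 - \frac{d}{2}\right) n, \qquad \forall\, n,
\]
which is a polynomial in $n$ of degree at most $2$. By axiom \textbf{A1} one has $\sum_{u=1}^{\infty} a_u = \lim_{n\to\infty} S_n$, so everything reduces to computing this limit.

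Next I would apply Theorem \ref{theor:polynom2} to the polynomial $f(x) = \frac{d}{2}x^2 + \left(a_1 - \frac{d}{2}\right)x$, which yields
\[
\lim_{n\to\infty} S_n = \int_{-1}^{0} \left( \frac{d}{2}x^2 + \left(a_1 - \frac{d}{2}\right) x \right) dx = \frac{d}{6} - \frac{1}{2}\left(a_1 - \frac{d}{2}\right) = \frac{5d - 6a_1}{12},
\]
using $\int_{-1}^{0} x^2\,dx = \tfrac13$ and $\int_{-1}^{0} x\,dx = -\tfrac12$. This is exactly (\ref{eq:theor_arith}).

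An alternative route postpones the appeal to Theorem \ref{theor:polynom2}: writing $a_u = (a_1 - d) + u d$ and invoking the linearity axiom \textbf{A3} gives $\sum_{u=1}^{\infty} a_u = (a_1 - d)\sum_{u=1}^{\infty} 1 + d\sum_{u=1}^{\infty} u$; substituting $\sum_{u=1}^{\infty} 1 = -\tfrac12$ (already established, see (\ref{eq:sum_of_1})) together with $\sum_{u=1}^{\infty} u = -\tfrac1{12}$ (which follows from $\lim_{n\to\infty}\tfrac{n(n+1)}{2} = \int_{-1}^{0}\tfrac{x^2+x}{2}\,dx = -\tfrac1{12}$ by Theorem \ref{theor:polynom2}, or equivalently from $\zeta(-1) = -B_2/2$) again produces $\frac{5d - 6a_1}{12}$.

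I do not expect a genuine obstacle here: the only things to check with care are that $S_n$ really is a real polynomial in the summation index $n$ (so that Theorem \ref{theor:polynom2} is applicable) and that the hypothesis $d \ge 0$ in the statement plays no role in this argument and could in fact be omitted.
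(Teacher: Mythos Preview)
Your proof is correct and follows essentially the same route as the paper: write the partial sum $S_n$ in closed form as a polynomial in $n$, invoke axiom \textbf{A1} to identify the series with $\lim_{n\to\infty}S_n$, and then apply Theorem~\ref{theor:polynom2} to evaluate that limit as $\int_{-1}^{0}S(x)\,dx$. Your alternative route via \textbf{A3} and the already-known values $\sum 1=-\tfrac12$, $\sum u=-\tfrac1{12}$ is a legitimate shortcut not spelled out in the paper, and your observation that the hypothesis $d\ge 0$ is never used is also correct.
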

\begin{proof}
The sum $S_n$ of the first $n$ terms of arithmetic progression with difference $d$ and first term $a_1$ is defined by 
\begin{equation}
S_n=\sum_{u=1}^{n}a_u=\frac{2a_1+(n-1)d}{2}\:n.    \label{arithmsum}
\end{equation}
Since $S_n$ is the polynomial, then passing to the limit and using (\ref{eq:theor_polynom2}), we get
$$
\lim_{n\rightarrow\infty}S_n = \lim_{n\rightarrow\infty}\sum_{u=1}^{n}a_u = \int\limits_{-1}^{0}\left(\frac{2a_1+(x-1)d}{2}\right)xdx = \frac{5d-6a_1}{12}
$$
and, in view of axiom \textbf{A1}, we finally obtain
$$
\lim_{n\rightarrow\infty}S_n = \sum_{u=1}^{\infty}a_u=\frac{5d-6a_1}{12}.
$$
The theorem is proved.
\end{proof}

\begin{example}
\begin{align*}
	&\sum_{u=1}^{\infty}1=1+1+1+...     =-\frac{1}{2},  && (d=0)   && \textup{\cite{titch}} \\ 
	&\sum_{u=1}^{\infty}u=1+2+3+...     =-\frac{1}{12}, && (d=1)   && \textup{\cite{titch}} \\
	&\sum_{u=1}^{\infty}(2u-1)=1+3+5+...=\frac{1}{3},   && (d=2)   && \textup{\cite{hardy}}
\end{align*}
\end{example}

\begin{theorem} \label{theor:alt_arith}
Let $a_1$, $a_2$,...,$a_u$,... be an infinite arithmetic progression, i. e. $a_u=a_1+(u-1)d$ and $d\geq 0$. Then for an alternating infinite arithmetic series we have
\begin{equation}
\sum_{u=1}^{\infty}(-1)^{u-1}\: a_u=\frac{2a_1-d}{4}. \label{eq:theor_alt_arith}
\end{equation}
\end{theorem}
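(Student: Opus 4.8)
The plan is to reduce the claim to a single limit via axiom \textbf{A1}, compute the partial sums of the alternating series in closed form, and dispose of the resulting oscillatory terms using Theorem \ref{theor:polynom1}.

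First I would set $T_n=\sum_{u=1}^{n}(-1)^{u-1}a_u$ and use $a_u=a_1+(u-1)d$ to split $T_n=a_1\sum_{u=1}^{n}(-1)^{u-1}+d\sum_{u=1}^{n}(-1)^{u-1}(u-1)$. Both elementary alternating sums have standard parity-dependent closed forms, namely $\sum_{u=1}^{n}(-1)^{u-1}=\tfrac12\bigl(1-(-1)^n\bigr)$ and $\sum_{u=1}^{n}(-1)^{u-1}(u-1)=\tfrac14\bigl((-1)^n(1-2n)-1\bigr)$, which I would verify by the usual pairing of consecutive terms (or a short induction). Collecting terms gives $T_n=\tfrac{2a_1-d}{4}+\bigl(\tfrac{d}{4}-\tfrac{a_1}{2}\bigr)(-1)^n-\tfrac{d}{2}\,n(-1)^n$, i.e. the desired constant plus a purely oscillatory remainder. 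Since $T_n=\sum_{u=1}^{n}(-1)^{u-1}a_u$ for every $n$, axiom \textbf{A1} gives $\sum_{u=1}^{\infty}(-1)^{u-1}a_u=\lim_{n\to\infty}T_n$; using linearity of the limit (exactly as in the proof of Theorem \ref{theor:polynom1}) together with $\lim_{n\to\infty}(-1)^n n^{\sigma}=0$ for $\sigma=0,1$, which is Theorem \ref{theor:polynom1}, the oscillatory remainder vanishes and $\lim_{n\to\infty}T_n=\tfrac{2a_1-d}{4}$, which is (\ref{eq:theor_alt_arith}).

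An alternative, slicker route avoids the parity bookkeeping: apply Theorem \ref{theor:distrib} to the regular function $f(u)=a_u$ (a degree-one polynomial, hence regular), so that $\sum_{u=1}^{\infty}f(2u)=\tfrac12(A-B)$ where $A=\sum_{u=1}^{\infty}a_u$ and $B=\sum_{u=1}^{\infty}(-1)^{u-1}a_u$. Since $f(2u)=(a_1+d)+(u-1)(2d)$ is again an arithmetic progression with nonnegative common difference $2d$, Theorem \ref{theor:arith} evaluates both $A=\tfrac{5d-6a_1}{12}$ and $\sum_{u=1}^{\infty}f(2u)=\tfrac{2d-3a_1}{6}$ in closed form; solving the resulting linear equation for $B$ yields $B=\tfrac{2a_1-d}{4}$.

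Either way there is no deep obstacle; the only point requiring care is that, in contrast with Theorem \ref{theor:arith}, the partial sum $T_n$ is \emph{not} a polynomial in $n$, so Theorem \ref{theor:polynom2} cannot be applied to it directly — one must first separate off the genuinely oscillating part and annihilate it via Theorem \ref{theor:polynom1}. The parity computations of the two elementary alternating sums should also be checked carefully, since an off-by-one there would propagate into the final constant.
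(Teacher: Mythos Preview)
Your proposal is correct. Both routes reach the right constant, and the first one is essentially the paper's first proof in a tighter form: the paper splits $\overline S_n$ according to the parity of $n$ and then invokes the auxiliary Theorem~\ref{theor:alphabeta} (itself a corollary of Theorem~\ref{theor:polynom1}) to average the two cases, whereas you write a single closed form $T_n=\tfrac{2a_1-d}{4}+(-1)^n p(n)$ with $p$ a degree-one polynomial and apply Theorem~\ref{theor:polynom1} directly---this bypasses Theorem~\ref{theor:alphabeta} entirely and is cleaner. Your alternative via Theorem~\ref{theor:distrib} is the even-index counterpart of the paper's second proof, which instead passes through the odd-index sum $\sum a_{2u-1}$ and the raw $[n/2]$ identity underlying axiom~\textbf{A2}; using the packaged Theorem~\ref{theor:distrib} as you do is again a bit more economical, and the choice of $f(2u)$ over $f(2u-1)$ has the bonus that no residual limit term $\lim_{n\to\infty}\bigl(f(n)-(-1)^nf(n)\bigr)$ needs to be evaluated. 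Your closing caveat about not applying Theorem~\ref{theor:polynom2} to $T_n$ is exactly the right observation.
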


To prove the theorem \ref{theor:alt_arith}, we need the following result which we establish
relying on Theorem \ref{theor:polynom1}.

\begin{theorem} \label{theor:alphabeta}
Let $\alpha(x)$ and $\beta(x)$ be elementary functions defined on $\mathbb{Z}$ which satisfy the condition $\alpha(x)-\beta(x)=f(x)$, where $f(x)$ is a polynomial. Suppose that $\mu(x)$ is a function such that $\mu(x)=\alpha(x)$ if \, $2\mid x$ and $\mu(x)=\beta(x)$ if \, $2\nmid x$. Then
$$
\lim_{n\rightarrow\infty}\mu(n)=\frac{1}{2}\lim_{n\rightarrow\infty}\left(\alpha(n)+\beta(n)\right).
$$
\end{theorem}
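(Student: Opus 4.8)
The plan is to collapse the piecewise definition of $\mu$ into a single elementary expression and then invoke Theorem \ref{theor:polynom1}. First I would write, for every integer $n$,
$$
\mu(n)=\frac{1+(-1)^{n}}{2}\,\alpha(n)+\frac{1-(-1)^{n}}{2}\,\beta(n),
$$
which agrees with the hypothesis on $\mu$ because $(-1)^{n}=1$ exactly when $2\mid n$ and $(-1)^{n}=-1$ exactly when $2\nmid n$. Regrouping the two terms and using $\alpha(x)-\beta(x)=f(x)$ gives the key identity
$$
\mu(n)=\frac{\alpha(n)+\beta(n)}{2}+(-1)^{n}\,\frac{\alpha(n)-\beta(n)}{2}=\frac{\alpha(n)+\beta(n)}{2}+\frac{(-1)^{n}f(n)}{2}.
$$

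The second step is to pass to the limit. The right-hand side is a finite linear combination of elementary functions of the integer argument (namely $\alpha$, $\beta$, and the polynomial $f$ carrying the alternating factor $(-1)^{n}$), so by the postulate of Section \ref{sec2} each summand has a definite limit and, exactly as already used in the proof of Theorem \ref{theor:polynom1}, the limit of the combination is the combination of the limits. Hence
$$
\lim_{n\rightarrow\infty}\mu(n)=\frac12\lim_{n\rightarrow\infty}\bigl(\alpha(n)+\beta(n)\bigr)+\frac12\lim_{n\rightarrow\infty}(-1)^{n}f(n).
$$

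The final step is immediate: since $f$ is a polynomial, Theorem \ref{theor:polynom1} yields $\lim_{n\rightarrow\infty}(-1)^{n}f(n)=0$, and the claimed identity $\lim_{n\rightarrow\infty}\mu(n)=\tfrac12\lim_{n\rightarrow\infty}(\alpha(n)+\beta(n))$ follows at once. The one point that needs care — and the reason the argument opens with the rewriting above — is the termwise passage to the limit: $\mu$ as originally presented is defined by cases, so before applying linearity of limits one must exhibit it as a genuine elementary expression in $\alpha$, $\beta$, $f$ and $(-1)^{n}$. Once that identity is in hand the whole burden rests on Theorem \ref{theor:polynom1}, and nothing further is required.
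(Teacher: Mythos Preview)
Your proof is correct and is essentially identical to the paper's own argument: the paper writes $\mu(n)=\mu_1(n)+(-1)^n\mu_2(n)$ with $\mu_1=\tfrac12(\alpha+\beta)$ and $\mu_2=\tfrac12(\alpha-\beta)=\tfrac12 f$, which is exactly your regrouped identity, and then applies Theorem~\ref{theor:polynom1} to kill the $(-1)^n f(n)$ term. Your extra remark about first exhibiting $\mu$ as an elementary expression before invoking linearity of limits is a nice clarification that the paper leaves implicit.
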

\begin{proof}[Proof of Theorem \ref{theor:alphabeta}]
The function $\mu(x)$ that satisfies the condition of the theorem can be represented as the sum of two functions
$$
\mu(n)=\mu_{1}(n)+(-1)^{n} \: \mu_{2}(n),
$$
where 
$$
\mu_{1}(n)=\frac{1}{2}\left(\alpha(n)+\beta(n)\right) ~\mbox{and}~
\mu_{2}(n)=\frac{1}{2}\left(\alpha(n)-\beta(n)\right).
$$
Therefore
$$
\lim_{n\rightarrow\infty}\mu(n)=\lim_{n\rightarrow\infty}\left(\mu_{1}(n)+(-1)^{n}\:\mu_{2}(n)\right).
$$
But we have
$$
\mu_{2}(n)=\frac{1}{2}\left(\alpha(n)-\beta(n)\right)=\frac{1}{2}f(n).
$$
Hence
$$
\lim_{n\rightarrow\infty}\mu(n)=\frac{1}{2}\lim_{n\rightarrow\infty}\left(\alpha(n)+\beta(n)\right)+\frac{1}{2}\lim_{n\rightarrow\infty}(-1)^{n}f(n)
$$
and in view of (\ref{eq:theor_polynom1})
$$
\lim_{n\rightarrow\infty}\mu(n)=\frac{1}{2}\lim_{n\rightarrow\infty}\left(\alpha(n)+\beta(n)\right).
$$
The proof is completed.
\end{proof}

\begin{remark} 
From the proof of Theorem \ref{theor:alphabeta}, one can see that the theorem actually holds for a considerably wider class of elementary functions $\alpha(x)$ and $\beta(x)$, namely, for the functions which satisfy the condition
$$
\lim_{n\rightarrow\infty}(-1)^{n}\left(\alpha(n)-\beta(n)\right)=0.
$$
\end{remark}

\begin{proof}[Proof of Theorem \ref{theor:alt_arith}]
We give two alternative proofs of the theorem.
\begin{enumerate}
\item
Let
$$
\overline{S}_n=\sum_{u=1}^{n}(-1)^{u-1} \: a_u, \qquad \forall \, n.
$$
If $n=2n_1$, then
$$
\overline{S}_n=\sum_{u=1}^{n_1} a^{\prime}_u - \sum_{u=1}^{n_1} b^{\prime}_u,
$$
where $a^{\prime}_u=a_1+(u-1)2d$ and $b^{\prime}_u=a_1+d+(u-1)2d$.\\
Therefore
$$
\overline{S}_n=\left(\frac{2a_1+(n_1-1)2d}{2}\right)n_1 - \left(\frac{2a_1+2d+(n_1-1)2d}{2}\right)n_1 = -d n_1
$$
and
$$
\overline{S}_n=-d \: \frac{n}{2}, \qquad (2\mid x).
$$
If $n=2n_1+1$, then
$$
\overline{S}_n=\sum_{u=1}^{n_1+1}a^{\prime}_u - \sum_{u=1}^{n_1}b^{\prime}_u = a_1+dn_1
$$
and
$$
\overline{S}_n=a_1 + d \: \frac{n-1}{2}, \qquad (2\nmid x).
$$
So, the function $\overline{S}_n$ takes the values of $-d(n/2)$ if $2\mid x$ and of $a_1+d(n-1)/2$ if $2\nmid x$. 
Hence, in view of Theorem \ref{theor:alphabeta}, since
$$
\left(-d\:\frac{n}{2}\right)-\left(a_1+d\:\frac{n-1}{2}\right)=-dn-\frac{2a_1-d}{2}  
$$
is the polynomial of degree $n$, we have
$$
\lim_{n\rightarrow\infty}\overline{S}_n=\lim_{n\rightarrow\infty}\sum_{u=1}^{n}(-1)^{u-1}a_u=\frac{1}{2}\lim_{n\rightarrow\infty}\left(a_1+\frac{d(n-1)}{2}-\frac{dn}{2}\right)=\frac{2a_1-d}{4}
$$
and finally
$$
\sum_{u=1}^{\infty}(-1)^{u-1}a_u=\frac{2a_1-d}{4}.
$$

\item
Let us take the equality
$$
\overline{S}_n + S_n = 2\sum_{u=1}^{[n/2]}a_{2u-1}+\bigl(1-(-1)^{n}\bigr)a_n,
$$
where $a_{2u-1}=a_1+(u-1)2d$, $a_n=a_1+(n-1)d$, and $S_n$ is defined by (\ref{arithmsum}).
Then, passing to the limit and taking into account (\ref{sigma}), 
Theorem \ref{theor:arith}, axiom \textbf{A2}, and formula $\lim_{n\to\infty}n=-1/2$ which follows from (\ref{eq:2_rem_odd_psi}),  
we get
$$
\lim_{n\rightarrow\infty}\overline{S}_n + \lim_{n\rightarrow\infty}S_n = 2\sum_{u=1}^{\infty}a_{2u-1}+\lim_{n\rightarrow\infty}a_n,
$$
that is we have
$$
\sum_{u=1}^{\infty}(-1)^{u-1}a_u + \frac{5d-6a_1}{12} = 2\biggl(\frac{10d-6a_1}{12}\biggr)+a_1-\frac{3}{2}d
$$
and
$$
\sum_{u=1}^{\infty}(-1)^{u-1}a_u = \frac{2a_1-d}{4}.
$$
\end{enumerate}
The theorem is proved.
\end{proof}

\begin{example}
\begin{align*}
	&\sum_{u=1}^{\infty}(-1)^{u-1}=1-1+1-1+...      =\frac{1}{2},  && (d=0)  && \textup{\cite{hardy}} \\
	&\sum_{u=1}^{\infty}(-1)^{u-1}u=1-2+3-4+...     =\frac{1}{4},  && (d=1)  && \textup{\cite{hardy}} \\
	&\sum_{u=1}^{\infty}(-1)^{u-1}(2u-1)=1-3+5-7+...=0,            && (d=2)  && \textup{\cite{hardy}}
\end{align*}
\end{example}

\begin{remark}
In the above theorem \ref{theor:alt_arith}, instead of using axiom \textbf{A2} in the second proof, one can rely on the following fact. 
\begin{proposition}
For any polynomial $F(n)$ we have
$$
\lim_{n\to\infty}F(n)=\lim_{n\to\infty}F\left(\frac{2n-1+(-1)^n}{4}\right).
$$
\end{proposition}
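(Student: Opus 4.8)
The plan is to observe that the quantity $\frac{2n-1+(-1)^n}{4}$ is nothing but $[n/2]$: when $n$ is even it equals $n/2$, and when $n$ is odd it equals $(n-1)/2$, so in both cases it equals $[n/2]$. Hence the proposition is equivalent to the assertion $\lim_{n\to\infty}F(n)=\lim_{n\to\infty}F([n/2])$ for every polynomial $F$, and this I would deduce simply by confronting axioms \textbf{A1} and \textbf{A2} through a single telescoping polynomial.

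First I would reduce to the case $F(0)=0$. Since $\lim_{n\to\infty}F(n)=F(0)+\lim_{n\to\infty}\bigl(F(n)-F(0)\bigr)$ and likewise with $[n/2]$ in place of $n$, and since the additive constant $F(0)$ is the same on both sides, it suffices to prove the identity for $G(x)=F(x)-F(0)$, which satisfies $G(0)=0$.

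Next, set $f(1)=G(1)$ and $f(u)=G(u)-G(u-1)$ for $u>1$. Because $G$ is a polynomial with $G(0)=0$, the function $f$ is a polynomial on $\mathbb{Z}$, in particular a regular function, and by construction the telescoping identity $\sum_{u=1}^{m}f(u)=G(m)$ holds for every integer $m\ge 1$. Applying this with $m=n$ and invoking Definition \ref{def:limit_seq} together with axiom \textbf{A1} gives $\lim_{n\to\infty}G(n)=\sum_{u=1}^{\infty}f(u)$. Applying it instead with $m=[n/2]$, so that $S_n=\sum_{u=1}^{[n/2]}f(u)=G([n/2])$, and invoking axiom \textbf{A2} gives $\lim_{n\to\infty}G([n/2])=\sum_{u=1}^{\infty}f(u)$. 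The two right-hand sides coincide, hence $\lim_{n\to\infty}G(n)=\lim_{n\to\infty}G([n/2])$; adding back the constant $F(0)$ and rewriting $[n/2]$ as $\frac{2n-1+(-1)^n}{4}$ yields the claim.

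I do not anticipate a serious obstacle here: once the identity $\frac{2n-1+(-1)^n}{4}=[n/2]$ is noticed, the statement is a direct application of \textbf{A1} and \textbf{A2} to the increment polynomial $f$. The only point requiring a line of care is the reduction to $G(0)=0$, which is needed so that the increment sequence prescribed by Definition \ref{def:limit_seq} genuinely coincides with the polynomial $G(u)-G(u-1)$ rather than differing from it at the single index $u=1$.
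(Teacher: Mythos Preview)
Your proof is correct. The observation that $\tfrac{2n-1+(-1)^n}{4}=[n/2]$ reduces the claim to $\lim_{n\to\infty}F(n)=\lim_{n\to\infty}F([n/2])$, and applying axioms \textbf{A1} and \textbf{A2} to the increment polynomial $f(u)=G(u)-G(u-1)$ (after normalizing so that $G(0)=0$) is a clean and valid argument within the paper's framework.

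The paper itself does not supply a proof of this proposition; it is stated without argument inside a remark. The context there is worth noting, however: the proposition is offered as a device one may use \emph{instead of} axiom \textbf{A2} in the second proof of Theorem~\ref{theor:alt_arith}, and the ``particular case'' spelled out immediately after it is precisely a polynomial instance of \textbf{A2}. This suggests the author had in mind a derivation not resting on \textbf{A2}, most naturally via Theorems~\ref{theor:polynom1}, \ref{theor:polynom2} and \ref{theor:alphabeta}: writing $\mu(n)=F([n/2])$ with $\alpha(n)=F(n/2)$ and $\beta(n)=F((n-1)/2)$, their difference is a polynomial, so Theorem~\ref{theor:alphabeta} gives $\lim_{n\to\infty}\mu(n)=\tfrac12\lim_{n\to\infty}\bigl(F(n/2)+F((n-1)/2)\bigr)$; Theorem~\ref{theor:polynom2} then evaluates the right-hand side as $\tfrac12\int_{-1}^{0}\bigl(F(x/2)+F((x-1)/2)\bigr)\,dx$, which after the substitutions $u=x/2$ and $u=(x-1)/2$ collapses to $\int_{-1}^{0}F(u)\,du=\lim_{n\to\infty}F(n)$. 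Your route through \textbf{A1} and \textbf{A2} is shorter and entirely legitimate; the alternative route has the merit of showing that, for polynomials, the conclusion already follows from the limit theorems established earlier without a fresh appeal to axiom \textbf{A2}, which is presumably the point of the remark.
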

In particular case, if $F(n)=\sum_{u=1}^{\left[n/2\right]}f(u), \, \forall \, n$, 
then $\lim_{n\to\infty}F(n)=\sum_{u=1}^{\infty}f(u)$.
\end{remark}

\begin{theorem} \label{theor:geom}
Let $g\neq 1$ be a real number, $g\in\mathbb{R}$. Then for an infinite geometric series we have
\begin{equation}
\sum_{u=0}^{\infty}g^u = \frac{1}{1-g}. \label{eq:theor_geom}
\end{equation}
\end{theorem}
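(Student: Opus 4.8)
The plan is to reduce the statement, via regularity of $g^{x}$ and axiom \textbf{A1}, to the single assertion $\lim_{n\to\infty}g^{n+1}=0$, and then to obtain that limit from the results of Section~\ref{sec:main}. First I would check that $f(x)=g^{x}$ is regular in the sense of Definition~\ref{def:reg}: for $g\neq 0,1$ the elementary function $F(x)=\dfrac{g^{x}}{g-1}$ satisfies $F(x+1)-F(x)=\dfrac{g^{x+1}-g^{x}}{g-1}=g^{x}$, so it is a primitive of $f$ (the case $g=0$ is immediate with $\sum_{u=0}^{\infty}0^{u}=1=\tfrac{1}{1-0}$, and $g=1$ is excluded). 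Hence by \eqref{regeq}, for every natural $n$,
\[
\sum_{u=0}^{n}g^{u}=F(n+1)-F(0)=\frac{g^{n+1}-1}{g-1},
\]
and, applying axiom \textbf{A1} with lower limit $0$ together with additivity and homogeneity of limits of elementary functions,
\[
\sum_{u=0}^{\infty}g^{u}=\lim_{n\to\infty}\frac{g^{n+1}-1}{g-1}=\frac{1}{g-1}\Bigl(\lim_{n\to\infty}g^{n+1}-1\Bigr).
\]

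The crux — and the step I expect to be the main obstacle — is to show $\lim_{n\to\infty}g^{n+1}=0$ for every $g\neq 1$. The natural route inside this framework is to feed a suitable elementary function into the limit theorems of Section~\ref{sec:main}: for example $\theta(x)=\dfrac{g^{x}-g^{1-x}}{g-1}$ is elementary and satisfies $\theta(-x)=-\theta(x+1)$, i.e. the hypothesis \eqref{eq:1_theor_theta1} of Theorem~\ref{theorem:theta1} with $\epsilon=-1$ and $\delta=1$, so that theorem yields $\lim_{n\to\infty}\bigl(g^{n+1}-g^{-n}\bigr)=0$; likewise the corollary of Lemma~\ref{lemma2} gives $\lim_{n\to\infty}g^{-(n+1)}=\lim_{n\to\infty}g^{n}$, and $\lim_{n\to\infty}(-1)^{n}=0$ is already recorded in \eqref{propmu} (for $g<0$ one would in addition use Theorem~\ref{theor:vartheta} to peel off the alternating sign). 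The delicate point is that each of these identities taken alone only relates $\lim_{n\to\infty}g^{n+1}$ to $\lim_{n\to\infty}g^{-n}$, so the real work is to combine them — or to invoke a stability/index-shift property of infinite sums — so as to force this common value to be $0$. An equivalent, more series-theoretic phrasing of the same gap is the telescoping identity $g\sum_{u=0}^{\infty}g^{u}=\sum_{u=1}^{\infty}g^{u}+\lim_{n\to\infty}g^{n+1}$, which together with $\sum_{u=0}^{\infty}g^{u}=1+\sum_{u=1}^{\infty}g^{u}$ again isolates $\lim_{n\to\infty}g^{n+1}$ as the only unknown.

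Once $\lim_{n\to\infty}g^{n+1}=0$ has been established, substituting it into the displayed formula gives
\[
\sum_{u=0}^{\infty}g^{u}=\frac{0-1}{g-1}=\frac{1}{1-g},
\]
independently of whether the series converges in the usual sense; in particular it holds for $|g|\ge 1$ (for instance $1+2+4+\cdots=-1$) exactly as for $|g|<1$. I would close with a consistency check against the earlier results: $g=-1$ recovers $1-1+1-\cdots=\tfrac12$, while $g=1$ is genuinely exceptional, since there $\lim_{n\to\infty}g^{n+1}=1\neq 0$.
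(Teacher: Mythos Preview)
Your reduction to the single unknown $\lim_{n\to\infty}g^{n+1}$ is clean, but --- as you yourself flag --- that limit is never actually pinned down. The identities you assemble (Theorem~\ref{theorem:theta1} with $\epsilon=-1$, the corollary of Lemma~\ref{lemma2}) only yield relations of the type $\lim g^{n+1}=\lim g^{-n}$ and $\lim g^{-(n+1)}=\lim g^{n}$; these are symmetric under $g\leftrightarrow g^{-1}$ and do not by themselves force the common value to be $0$. The ``telescoping'' reformulation you give is, as you note, the same gap rephrased, and a step like $\lim_{n\to\infty}g^{n+1}=g\,\lim_{n\to\infty}g^{n}$ (which would close it) is not among the axioms or established properties of the framework. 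So the proposal is an outline with the essential step missing.

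The paper sidesteps this difficulty entirely by a different decomposition. It treats three cases: $|g|<1$ is the classical convergent series; $g=-1$ is read off from Theorem~\ref{theor:alt_arith} with $a_1=1$, $d=0$; and for $|g|>1$ it exploits the reflection identity \eqref{eq:2_from_lemma3}, applied to the regular function $f(u)=g^{-u}$, to get
\[
\sum_{u=1}^{\infty}g^{-u}=-\sum_{u=1}^{\infty}g^{u}-1=-\sum_{u=0}^{\infty}g^{u}.
\]
Since $|1/g|<1$, the left side is the ordinary convergent sum $1/(g-1)$, and the result follows at once. The virtue of this route is that it never needs the value of $\lim_{n\to\infty}g^{n}$: the divergent series is computed by reflecting it onto a convergent one, which is exactly the kind of move the reordered number line was built for.
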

\begin{proof}

For $|g|<1$ the validity of the theorem is obvious, since (\ref{eq:theor_geom}) reduces to the sum formula for infinite geometric series. 

Thus, let $|g|>1$. Then $\left|\dfrac{1}{g}\right|<1$ and
$$
\sum_{u=1}^{\infty}\frac{1}{g^u}=\frac{1}{g-1}.
$$

It is clear that the function $\dfrac{1}{g^x}$ is regular. Hence, in view of (\ref{eq:2_from_lemma3}), we get
$$
\sum_{u=1}^{\infty}\frac{1}{g^u}=-\sum_{u=0}^{\infty}g^u
$$
and
$$
\sum_{u=0}^{\infty}g^u=\frac{1}{1-g}.
$$

The case $g=-1$ reduces to the Theorem \ref{theor:alt_arith} with $a_1=1$ and $d=0$. So, it follows that
for $g=-1$ the sum is also equal to $1/(1-g)$.

This completes the proof.
\end{proof}

\begin{example}
\begin{align}
	&\sum_{u=0}^{\infty}2^u = 1+2+4+...          =-1           && \textup{\cite{hardy}} \\
	&\sum_{u=0}^{\infty}(-1)^{u}\:2^u = 1-2+4-...=\frac{1}{3}  && \textup{\cite{hardy,kline}} 
\end{align}
\end{example}



\section{Extension of the class of regular functions}\label{sec:exten} 

In Section \ref{sec2} we defined regular functions $f(x)$ by 

\begin{equation}
F(z+1)-F(z)=f(z), \quad \forall \, z\in \mathbb{Z}    \label{eq:regucond}
\end{equation}
with the condition that a primitive function $F(x)$ is elementary.

In order to extend the class of regular functions by constructing them from non-elementary functions, we have to redefine the notion of regularity and the notion of odd/even function. To do this, it suffices to refine the definitions by means of limit conditions.

Before we proceed, it should be noted that for functions $f(x)$ and $F(x)$ connected by (\ref{eq:regucond}) the limiting relation 
\begin{equation}
\lim_{n\rightarrow\infty}F(n+1)-\lim_{n\rightarrow\infty}F(n)=\lim_{n\rightarrow\infty}f(n)  \label{eq:prop_exten_main}
\end{equation}
always holds in case the primitive function $F(x)$ is elementary.

So, we have the following

\begin{proposition} \label{prop:exten}
For any two functions $f(x)$ and $F(x)$ connected by the relation
$F(z+1)-F(z)=f(z), \: \forall \, z\in \mathbb{Z}$, where $F(x)$ is elementary, we have
$$
\lim_{n\rightarrow\infty}F(n+1)-\lim_{n\rightarrow\infty}F(n)=\lim_{n\rightarrow\infty}f(n). 
$$
\end{proposition}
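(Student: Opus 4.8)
The plan is to unwind both generalized limits on the left-hand side by means of Definition \ref{def:limit_seq}, subtract the two resulting series termwise, and recognize the difference as precisely the series that Definition \ref{def:limit_seq} attaches to $\lim_{n\to\infty} f(n)$. As a preliminary I would record that, since $F(x)$ is elementary, the function $f(x)=F(x+1)-F(x)$ is elementary too, hence regular with primitive $F$; likewise $u\mapsto f(u-1)$ is regular, its primitive $F(x-1)$ being elementary. Consequently the postulate (together with axiom \textbf{A1}) guarantees that every infinite series written below converges to a finite value, which is what licenses the manipulations.

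First I would apply Definition \ref{def:limit_seq} to the sequence $F(1),F(2),\dots$, which yields
$$
\lim_{n\to\infty}F(n)=F(1)+\sum_{u=2}^{\infty}\bigl(F(u)-F(u-1)\bigr),
$$
and to the shifted sequence $F(2),F(3),\dots$, which yields
$$
\lim_{n\to\infty}F(n+1)=F(2)+\sum_{u=2}^{\infty}\bigl(F(u+1)-F(u)\bigr).
$$
Subtracting these identities, and merging the two tail series by axiom \textbf{A3} (here is where the convergence noted above is used), I obtain
$$
\lim_{n\to\infty}F(n+1)-\lim_{n\to\infty}F(n)=\bigl(F(2)-F(1)\bigr)+\sum_{u=2}^{\infty}\Bigl(\bigl(F(u+1)-F(u)\bigr)-\bigl(F(u)-F(u-1)\bigr)\Bigr).
$$

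Now $F(2)-F(1)=f(1)$, and for $u\ge 2$ the general term of the series is $\bigl(F(u+1)-F(u)\bigr)-\bigl(F(u)-F(u-1)\bigr)=f(u)-f(u-1)$, so the right-hand side equals $f(1)+\sum_{u=2}^{\infty}\bigl(f(u)-f(u-1)\bigr)$. This is exactly $\sum_{u=1}^{\infty}\psi(u)$ for the sequence defined by $\psi(1)=f(1)$ and $\psi(u)=f(u)-f(u-1)$ for $u>1$, and since $f$ is elementary, Definition \ref{def:limit_seq} identifies that sum with $\lim_{n\to\infty}f(n)$. This proves the proposition.

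The step I expect to require the most care is not any computation but the series bookkeeping: one must justify (i) splitting off the first term of an infinite sum, which follows from \textbf{A1} applied to partial sums together with linearity for a constant sequence, and (ii) the termwise subtraction of the two tail series, which is axiom \textbf{A3} and is admissible precisely because $\lim_{n\to\infty}F(n+1)$ and $\lim_{n\to\infty}F(n)$ are finite. It is also worth stressing that no shift of the summation index is carried out on the tail series, since for these generalized sums reindexing is not in general permitted — it is the difference of the two \emph{first} terms, $F(2)-F(1)=f(1)$, that supplies the contribution a naive ``telescoping then pass to the limit'' argument would miss.
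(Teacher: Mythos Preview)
Your proof is correct and follows essentially the same route as the paper's first proof: both reduce the statement, via Definition~\ref{def:limit_seq} and axiom~\textbf{A3}, to linearity of the generalized limit. The paper's execution is a bit more direct --- it simply notes that $F(n+1)-F(n)-f(n)\equiv 0$, so its limit is $0$, and then splits by linearity --- and in fact your own argument can skip the first-term bookkeeping you flag: applying \textbf{A3} to the two $\sum_{u=1}^{\infty}$ series without peeling off $u=1$ already gives termwise $f(1)$ at $u=1$ and $f(u)-f(u-1)$ for $u>1$, which is exactly the telescoping series defining $\lim_{n\to\infty} f(n)$.
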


\begin{proof}
This proposition can be proved by different ways.
We only present  two proofs of this statement.
\begin{itemize}
\item[(I)] Using formula (\ref{regeq}), we have
\begin{equation}
F(n+1)-F(1)=\sum_{u=1}^{n}f(u), \quad \forall \, n  \label{eq:prop_exten_1}
\end{equation}
and
\begin{equation}
F(n)-F(1)=\sum_{u=1}^{n}f(u)-f(n), \quad \forall n. \label{eq:prop_exten_2}
\end{equation}
Subtracting (\ref{eq:prop_exten_2}) from (\ref{eq:prop_exten_1}), we get
$$
S_n = F(n+1)-F(n)-f(n)=\sum_{u=1}^{n}\omega(u), \quad \forall \, n
$$
where $\omega(u)\equiv 0$.
Now passing to the limit, we have
$$
\lim_{n\rightarrow\infty}S_n=\lim_{n\rightarrow\infty}\left(F(n+1) - F(n) - f(n)\right)=\sum_{u=1}^{\infty}\omega(u).
$$
Due to our claim that every elementary function defined on $\mathbb{Z}$ has a certain limit, we can write
$$
\lim_{n\rightarrow\infty}F(n+1)-\lim_{n\rightarrow\infty}F(n) - \lim_{n\rightarrow\infty}f(n)=\sum_{u=1}^{\infty}\omega(u).
$$
And since $\sum_{u=1}^{\infty}\omega(u)=0$, we finally obtain
$$
\lim_{n\rightarrow\infty}F(n+1)-\lim_{n\rightarrow\infty}F(n)=\lim_{n\rightarrow\infty}f(n),
$$
which completes the proof. 
\item[]
\item[(II)] According to the formula (\ref{regeq}) and formula (\ref{eq:1_from_lemma2}), we have
\begin{equation}
F(-n)-F(1)=\sum_{u=1}^{-(n+1)}f(u)=-\sum_{u=0}^{n}f(-u), \quad \forall \, n  \label{eq:prop_exten_3}
\end{equation}
and
\begin{equation}
F(n)-F(1)=\sum_{u=1}^{n}f(u)-f(n), \quad \forall \, n. \label{eq:prop_exten_4}
\end{equation}
Subtracting (\ref{eq:prop_exten_4}) from (\ref{eq:prop_exten_3}), we get
$$
F(-n)-F(n)-f(n)=-\left(\sum_{u=0}^{n}f(-u)+\sum_{u=1}^{n}f(u)\right), \quad \forall \, n.
$$
Passing to the limit, and taking into account that every elementary function defined on $\mathbb{Z}$ has a certain limit, and also using Lemma \ref{lemma2}, we obtain
$$
\lim_{n\rightarrow\infty}F(n+1)-\lim_{n\rightarrow\infty}F(n) - \lim_{n\rightarrow\infty}f(n)=
-\left(\sum_{u=1}^{\infty}f(-u)+\sum_{u=0}^{\infty}f(u)\right),
$$
which in view of (\ref{eq:2_from_lemma3}) finally gives
$$
\lim_{n\rightarrow\infty}F(n+1)-\lim_{n\rightarrow\infty}F(n)=\lim_{n\rightarrow\infty}f(n).
$$
\end{itemize}

The proposition is proved.
\end{proof}

\begin{example}
Consider the primitive function
$$ F(n) =              
									\begin{cases}                   
									1, & \quad n = 0,\pm 2,\pm 4,\dots , \\                   
									0, & \quad n = \pm 1,\pm 3,\dots              
									\end{cases}       
						\; = \; \frac{(-1)^n+1}{2}
$$
which generates the regular function
$$ f(n) =              
									\begin{cases}                   
								  1, & \quad  n = \pm 1,\pm 3,\dots \\                   
								 -1 , & \quad  n = 0,\pm 2,\pm 4,\dots
									\end{cases}       
						 \; = \; (-1)^{n+1}
$$
since we have $F(x+1)-F(x)=f(x), \ \forall \, x \in\mathbb{Z}$. It is easy to see that for these functions 
the limiting relation (\ref{eq:prop_exten_main}) 
$$
\lim_{n\rightarrow\infty}F(n+1)-\lim_{n\rightarrow\infty}F(n) = \lim_{n\rightarrow\infty}f(n)
$$
does hold. From Lemma  \ref{lemma2} $\lim_{n\to\infty}F(n+1)=\lim_{n\to\infty}F(-n)$ and from the definition of $F(n)$ we have $F(-n)=F(n)$, hence
the left-hand side is equal to zero, and according to the formula (\ref{propmu}) $\lim_{n\to\infty}f(n)=0$.
The same follows immediately from the formula (\ref{propmu}) if we take all three limits separately.
\end{example}

However, the limiting relation (\ref{eq:prop_exten_main}), and also some of our statements, might not hold in case a primitive function $F(x)$ is non-elementary.
The following example demonstrates this.

\begin{example} \label{ex:nonelement}
Let us consider the function given as
$$ f(x) =               
									\begin{cases}                   
									0, & \quad x\neq 0, \\                   
									1, & \quad x=0              
									\end{cases}       						 
$$
the primitive function of which is
$$ F(x) =               
									\begin{cases}                   
									1, & \quad x = 1,2,3,\dots , \\                   
									0, & \quad x = 0,-1,-2,\dots              
									\end{cases}       					
$$
since $F(x+1)-F(x)=f(x), \ \forall \, x \in\mathbb{Z}$. 
The function $F(x)$ is not elementary. 
One can see that the relation (\ref{eq:prop_exten_main}) is not satisfied for the functions $F(x)$ and $f(x)$: 
within the framework of our  method, in view of Lemma  \ref{lemma2} we have $\lim_{n\to\infty}F(n+1)=\lim_{n\to\infty}F(-n)$, and therefore
$$
\lim_{n\rightarrow\infty}F(n+1)-\lim_{n\rightarrow\infty}F(n) = -1 \neq \lim_{n\rightarrow\infty}f(n).
$$
Moreover, the function $f(x)$ is even and according to the Theorem \ref{theorem:inf_sum} we would have had
$\sum_{u=1}^{\infty}f(u)=-f(0)/2=-1/2$, whereas in our case $\sum_{u=1}^{\infty}f(u)=0$. However, one should not think about inconsistencies here, 
and just have to notice that if we are still within the definition \ref{def:reg} then according to it the function $f(x)$ is not regular because the function $F(x)$ is non-elementary, and moreover, it does not satisfy the relation (\ref{eq:prop_exten_main}), which is highly essential. 

We return to this example after we refine our definitions below.
\end{example}

The new definition of regularity explicitly includes the limiting relation (\ref{eq:prop_exten_main}), and the notion of parity of functions has to be refined so that the classical conditions of function oddness and evenness should also hold at infinity. These definitions are now formulated as follows.

\begin{definition} \label{def:exten}
The function $f(x)$, $x\in \mathbb{Z}$, is called \textit{regular} if there exists a function $F(x)$, not necessarily elementary, such that 
$$
F(z+1)-F(z)=f(z), \quad \forall \, z\in \mathbb{Z}
$$
and
$$
\lim_{n\rightarrow\infty}F(n+1)-\lim_{n\rightarrow\infty}F(n)=\lim_{n\rightarrow\infty}f(n).
$$
\end{definition}

\begin{definition} \label{def:neweven}
The function $f(x)$, $x\in \mathbb{Z}$, is called \textit{even} if 
$$
f(-z)=f(z), \quad \forall \, z\in \mathbb{Z}
$$
and
$$
\lim_{n\rightarrow\infty}f(-n)=\lim_{n\rightarrow\infty}f(n).
$$
\end{definition}

\begin{definition} \label{def:newodd}
The function $f(x)$, $x\in \mathbb{Z}$, is called \textit{odd} if 
$$
f(-z)=-f(z), \quad \forall \, z\in \mathbb{Z}
$$
and
$$
\lim_{n\rightarrow\infty}f(-n)=-\lim_{n\rightarrow\infty}f(n).
$$
\end{definition}


Returning to the example \ref{ex:nonelement}, one can see that the function $f(x)$, according to the definition \ref{def:neweven}, is not even, since $\lim_{n\to\infty}f(-n)\neq\lim_{n\to\infty}f(n)$. And so it agrees with all the formulas in which the function is not required to be even, whereas it does not satisfy the theorem \ref{theorem:inf_sum}, where the function evenness is an essential requirement.

Thus, the class of regular functions can be substantially extended by using non-elementary functions,  
but one should be very careful when doing this.


\section{Discussion and concluding remarks}\label{sec:conc}

In this concluding section we propose to recall some of our main results,  and  briefly outline some directions which seem to be useful to develop.

In this work we presented the general and unified method of summation, which is regular and consistent with the classical definition of summation. 

We introduced the new ordering relation on the set of integer numbers and defined sum   in Section \ref{sec2}, along with the properties of summation over newly ordered number line presented in Section \ref{sec:sums}. As one can see, this is the foundation the entire theory rests on.

Our results of Section \ref{sec:main}
are for the present the main tools of our method. Those theorems are found useful in applications, for example, in finding sums of the certain classes of series which somehow involve regular functions. Besides, finding limits of unbounded and oscillating functions and sequences is another advantage, since the knowledge of certain limits has significance in series summations.

Some of the propositions, mainly those concerning the limits of unbounded and oscillating functions and sequences,
being true  within the framework of our theory, might not hold in the usual ``epsilon-delta''--definition of limit in classical analysis; this is also due to the fact that, as it is known, modern analysis does not possess general tools for finding the
limits of sequences and functions with oscillation or unboundedness; the matter is that the topology\footnote{but not the axioms, which are obviously consistent with the standard topology of $\mathbb{R}$} of the newly ordered number line may be different\footnote{topological and algebraic properties are the subjects of separate studies} from that in classical sense. Perhaps, lemmas \ref{lemma:1} and \ref{lemma2} may shed some light on the understanding of topological properties of the number line equipped with the introduced  ordering relation.

We would like to note that the developed theory can be extended to products.
The equality analogous to (\ref{def:first_sum_eq}) can also be defined for products $\prod_a^b$ with arbitrary limits $a$ and $b$, $(a ^>_< b)$. Namely
\begin{definition}
For any $a,b\in\mathbb{Z}$
$$
\prod_{u=a}^{b}f(u)=\prod_{u\in\mathbb{Z}_{a,b}}f(u).
$$
\end{definition}

We have the following
\begin{proposition} \label{prop:product}
Suppose $f(x)$ is a regular function defined on $\mathbb{Z}$ and let $f(x)$ is such that \, $\prod_{u=1}^{0}f(u)=1$. Then
\begin{equation}
\prod_{u=0}^{-n}f(u)=\left(\prod_{u=1}^{n-1}f(-u)\right)^{-1}.  \label{eq:product}
\end{equation}
\end{proposition}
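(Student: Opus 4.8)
The plan is to imitate the derivation of the additive identity (\ref{eq:1_from_lemma2}), replacing each sum by the corresponding product and the additive identity $0$ by the multiplicative identity $1$. A preliminary remark is that the proofs of Propositions \ref{prop:a_a-1}, \ref{prop:pair_of_numbers} and \ref{prop:mean}, of Lemma \ref{lemma3}, and of the axioms \textbf{A1}--\textbf{A4} use only the definition of the index set $\mathbb{Z}_{a,b}$ and its decompositions into disjoint segments; hence their multiplicative analogues hold for $\prod_{u\in\mathbb{Z}_{a,b}}f(u)$ by the same arguments. In particular the multiplicative form of \textbf{A4} states that if $G$ is the disjoint union of segments $\mathbb{Z}_{a_1,b_1}$ and $\mathbb{Z}_{a_2,b_2}$ then $\prod_{u\in G}f(u)=\prod_{u=a_1}^{b_1}f(u)\cdot\prod_{u=a_2}^{b_2}f(u)$. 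Moreover, the hypothesis $\prod_{u=1}^{0}f(u)=1$ plays here the role that (\ref{eq:sum_zero}) plays in the additive theory: since $\mathbb{Z}_{1,0}=\mathbb{Z}$ (a purely set-theoretic fact, see Proposition \ref{prop:a_a-1}), it gives $\prod_{u\in\mathbb{Z}}f(u)=1$, and hence $\prod_{u=a}^{a-1}f(u)=1$ for every $a\in\mathbb{Z}$.

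With this in hand the computation is short. First I would note that for $n\in\mathbb{N}$ we have $0\prec -n$, so $\mathbb{Z}_{0,-n}=[0,-n]$, and directly from Definition \ref{def:order} that $\mathbb{Z}\setminus\mathbb{Z}_{0,-n}=\{-1,-2,\dots,-(n-1)\}$, which is empty when $n=1$ and is the segment $[-(n-1),-1]$ when $n\geq 2$. Then I would apply the multiplicative \textbf{A4} to the partition $\mathbb{Z}=\mathbb{Z}_{0,-n}\sqcup(\mathbb{Z}\setminus\mathbb{Z}_{0,-n})$ to get $\prod_{u\in\mathbb{Z}}f(u)=\bigl(\prod_{u=0}^{-n}f(u)\bigr)\cdot\bigl(\prod_{u\in\mathbb{Z}\setminus\mathbb{Z}_{0,-n}}f(u)\bigr)$, and rewrite the last factor, via the multiplicative analogue of Proposition \ref{prop:pair_of_numbers} (the substitution $u\mapsto -u$), as $\prod_{u=1}^{n-1}f(-u)$; both reduce to $1$ in the degenerate case $n=1$. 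Since $\prod_{u\in\mathbb{Z}}f(u)=1$, this yields $1=\bigl(\prod_{u=0}^{-n}f(u)\bigr)\cdot\prod_{u=1}^{n-1}f(-u)$, which rearranges to exactly (\ref{eq:product}).

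A more ``literal'' variant would first establish the multiplicative version of Lemma \ref{lemma3}, $\prod_{u=a}^{b}f(u)=\bigl(\prod_{u=b+1}^{a-1}f(u)\bigr)^{-1}$, by setting $c=a-1$ in the multiplicative form of Proposition \ref{prop:mean} and invoking $\prod_{u=a}^{a-1}f(u)=1$, and then specialize $a=0$, $b=-n$ before reindexing with Proposition \ref{prop:pair_of_numbers}; this follows the proof of (\ref{eq:1_from_lemma2}) line for line. I do not expect a real obstacle: the only delicate points are the degenerate case $n=1$ (all products collapse to $1$) and the orientation bookkeeping, which in the multiplicative setting amounts to taking a reciprocal instead of changing a sign. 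The one genuinely needed input is that $\prod_{u=1}^{0}f(u)$ is a well-defined number equal to $1$; once this is granted, everything reduces to a formal reindexing of products whose values are supplied by hypothesis.
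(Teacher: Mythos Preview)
The paper does not actually supply a proof of Proposition~\ref{prop:product}; it is stated in the concluding discussion and left to the reader, with the surrounding text indicating only that ``similar results and theory, as we elaborated for series in this paper, can be developed for products.'' Your proposal is therefore being compared against an implicit expectation rather than a written argument.

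That said, your approach is exactly the one the paper invites: it is a line-for-line multiplicative transcription of the derivation of (\ref{eq:1_from_lemma2}) in the proof of Theorem~\ref{theorem:inf_sum}, with Lemma~\ref{lemma3} and Proposition~\ref{prop:sum_zero} replaced by their product analogues and the role of $0$ taken over by $1$. Your set-theoretic identifications ($\mathbb{Z}_{0,-n}=[0,-n]$ and $\mathbb{Z}\setminus[0,-n]=[-(n-1),-1]$) are correct under the ordering $\prec$, the reindexing via the multiplicative form of Proposition~\ref{prop:pair_of_numbers} is valid, and you are right that the hypothesis $\prod_{u=1}^{0}f(u)=1$ is precisely what substitutes for the proved additive fact $\sum_{u\in\mathbb{Z}}f(u)=0$. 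The only point worth flagging is that the additive regularity of $f$ plays no genuine role in your argument --- everything rests on the assumed value of the full product and on purely combinatorial decompositions --- which matches the paper's own somewhat loose use of ``regular'' in this proposition.
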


So, we can expect that similar results and theory, as we elaborated for series in this paper, can be developed for products.
For example, relying on proposition \ref{prop:product}, the factorial function $n!$, initially defined only for natural numbers, can be easily expanded on the set of all integers. 
This can be done if one notices that the function $f(x)=x$ satisfies the proposition \ref{prop:product}, since for the factorial $\lambda(a)=\prod_{u=1}^{a}u=a!$ one formally has $\lambda(0)=\prod_{u=1}^{0}u=0!=1$; then putting $f(x)=x$ in (\ref{eq:product}) one can notice that analogously to gamma-function the function $\lambda(a)$ has simple poles at $a=-n$ with residue $(-1)^{n-1}/(n-1)!$; using this, one can define the binomial coefficients for negative numbers.

One of the advantages of our approach is that it provides a systematic method for summation of series in a certain class of functions discussed in the paper. Using the same techniques and proofs one can further develop, refine, or generalize the presented results.

For example, for the formulas (\ref{eq:theor_arith}) and (\ref{eq:theor_alt_arith}) for infinite (alternating) arithmetic series it would be interesting to study the question of whether it is possible to obtain these  formulas with use of analytic continuation techniques. 
It would be natural to study some other special types of sequences to get similar finite formulas as for arithmetic series.

The main restriction of the method is that it is discrete, that is, it works when the function is considered at integer points; 
for example, at the current level of development of the method we cannot directly evaluate $\zeta(1/2)$.
However, further development of the theory should make it work in the continuous case, thus extending it to real arguments. This problem does not seem to be straightforward. 

This work shows that there is a very natural way of extending
summations to the case when the upper limit of summation is less than the lower or even negative. 
Many classical results nicely fall into this more general theoretical setting, for example, 
the geometric series, the binomial theorem and the Gauss hypergeometric series \cite{bag-jmr}, some series involving Bernoulli numbers and polynomials \cite{bag-phan}, and Riemann's zeta and related functions \cite{bag-phan,bag-aip}. Thus, using our summation method, recently we evaluated the Riemann's zeta function and related zeta functions at integer points \cite{bag-jmr,bag-phan,bag-aip} in elementary fashion, for example, we have recovered the standard representation for $\zeta(2k)$ and $\zeta(-k)$. Perhaps, use of the method to study zeta values at odd positive integers, including studies on series representations, seems to be interesting. It would also be interesting to apply our method for some other Dirichlet series or classes of series associated with the zeta and related functions. 
 
Further, it would be a natural attempt to connect this research with a topic that might bear a relation to multiple zeta functions; the first topic might be evaluating and studying multiple zeta values, as well as Euler and harmonic sums. For example, in \cite{basu} T.~Apostol and A.~Basu use generalized telescoping sums to propose a new method to investigate Euler sums. 
So, it seems promising to employ the presented summation method, which uses, to an extent, a ``telescoping'' idea as well, to make an attempt to develop yet another approach to studying those sums.

The method developed here fundamentally differs from the others in its foundational aspects; though having some physical flavor, it has as an underlying premise an original viewpoint on the number line, which seems to be of independent interest.

It is expected that the developed summation theory will continue to yield new results, and the theory will appear to be useful in various applications.

\section*{Acknowledgements}
I  wish to express my sincere gratitude  to Harold Diamond (University of Illinois at Urbana-Champaign)
for reading the manuscript, 
constructive comments and  suggestions throughout the paper, and for his keen interest to this work.
I also would like to thank Robert Sczech (Rutgers University) for useful discussions and remarks, and his interest to the ideas developed in this paper.



\end{document}